\documentclass[reqno]{amsart}

\usepackage[english]{babel}

\usepackage[letterpaper,top=2cm,bottom=2cm,left=2cm,right=2cm,marginparwidth=1.75cm]{geometry}

\usepackage{amsmath}
\usepackage{amsfonts}
\usepackage{amsthm}
\usepackage{amssymb}
\usepackage{dsfont}
\usepackage{soul} 
\usepackage[usenames, dvipsnames]{xcolor}

\theoremstyle{plain}
\newtheorem{proposition}{Proposition}%
\newtheorem{lemma}{Lemma}
\newtheorem{theorem}{Theorem}%
\theoremstyle{remark}
\newtheorem{remark}{Remark}%
\newtheorem{example}{Example}%
\newtheorem{definition}{Definition}%

\usepackage{tikz}
\usepackage{mathrsfs} 
\usepackage{graphicx}
\usepackage{bm} 
\usepackage[colorlinks=true,linkcolor=BrickRed,citecolor=RoyalBlue]{hyperref} 
\usepackage{cleveref}

\definecolor{colorOne}{RGB}{0, 39, 77}
\definecolor{colorTwo}{RGB}{97, 0, 57}
\definecolor{colorThree}{RGB}{176, 1, 54}

\newcommand{\timeVariable}{t} 
\newcommand{\spaceVariable}{x} 
\newcommand{\vectorial}[1]{\bm{#1}} 
\newcommand{\conservedVariable}{u} 
\newcommand{\flux}{\varphi} 
\newcommand{\idEst}{\emph{i.e.}}
\newcommand{\confer}{\emph{cf.}}
\newcommand{\spaceStep}{\Delta \spaceVariable} 
\newcommand{\timeStep}{\Delta \timeVariable} 
\newcommand{\latticeVelocity}{\lambda} 
\newcommand{\discreteVelocityLetter}{c} 
\newcommand{\indexVelocity}{i} 
\newcommand{\relatives}{\mathbb{Z}} 
\newcommand{\naturals}{\mathbb{N}} 
\newcommand{\discrete}[1]{\mathsf{#1}} 
\newcommand{\distributionFunctionLetter}{f}
\newcommand{\distributionFunction}{\discrete{\distributionFunctionLetter}} 
\newcommand{\indexSpace}{j} 
\newcommand{\indexTime}{n} 

\newcommand{\definitionEquality}{:=} 
\newcommand{\collided}{\star} 

\newcommand{\relaxationParameterSymmetric}{\omega_{\discrete{s}}} 
\newcommand{\relaxationParameterAntiSymmetric}{\omega_{\discrete{a}}} 
\newcommand{\atEquilibrium}{\textnormal{eq}} 
\newcommand{\conservedVariableDiscrete}{\discrete{\conservedVariable}} 
\newcommand{\equilibriumCoefficientLinear }{\mathscr{L}} 
\newcommand{\differential}{\textnormal{d}} 
\newcommand{\maximumInitialDatum}{\mu_{\infty}} 
\newcommand{\minimumDistribution}{\underline{m}} 
\newcommand{\maximumDistribution}{\overline{m}} 
\newcommand{\lbmScheme}[2]{$\textnormal{D}_{#1}\textnormal{Q}_{#2}$} 
\newcommand{\reals}{\mathbb{R}} 
\newcommand{\initial}{\circ} 

\newcommand{\transpose}[1]{#1^{\textsf{T}}} 
\newcommand{\indicatorFunction}[1]{\mathds{1}_{#1}} 

\newcommand{\expo}{{\rm e}}
\newcommand{\ud}{\frac{1}{2}}
\newcommand{\dt}{\timeStep}

\newcommand{\es}{\varepsilon_{\discrete{s}}}
\newcommand{\ea}{\varepsilon_{\discrete{a}}}
\newcommand{\oa}{\relaxationParameterAntiSymmetric}
\newcommand{\os}{\relaxationParameterSymmetric}
\newcommand{\minvec}{{\bf \minimumDistribution}}
\newcommand{\maxvec}{{\bf \maximumDistribution}}

\newcommand{\exs}{\expo^{-\frac{\dt}{\es}}}
\newcommand{\exa}{\expo^{-\frac{\dt}{\ea}}}

\newcommand{\strong}[1]{\emph{#1}}

\newcommand{\positiveVelocitySymbol}{+}
\newcommand{\negativeVelocitySymbol}{-}
\newcommand{\zeroVelocitySymbol}{\circ}

\title[Convergence of a two-relaxation-times kinetic approximation]{Convergence of a two-relaxation-times kinetic approximation towards the solution of a scalar conservation law}

\author{Denise Aregba-Driollet}
\address{Université de Bordeaux, CNRS, Bordeaux INP, IMB, UMR 5251, 33400 Talence, France.}
\email{aregba@math.u-bordeaux.fr}

\author{Thomas Bellotti}
\address{Université Paris-Saclay, CNRS, CentraleSupélec, Laboratoire EM2C \& Fédération de Mathématiques de CentraleSupélec, 91190, Gif-sur-Yvette, France}
\email{thomas.bellotti@centralesupelec.fr}

\keywords{relaxation approximation, discrete-velocity kinetic model, two-relaxation-times, quasi-monotonicity, lattice Boltzmann}
\subjclass[2020]{35L03, 76M28, 35C07}

\begin{document}
\maketitle

\begin{abstract}
  We introduce a two-relaxation-times (TRT) kinetic approximation for scalar non-linear conservation laws in one space dimension, addressing the convergence of relaxation approximations to entropy solutions. The proposed TRT system, derived from a lattice Boltzmann scheme, generalizes the classical BGK (single-relaxation-time) framework. Requesting quasi-monotonicity of the relaxation operator, we establish global existence of solutions for the TRT system and prove their convergence, using a lattice Boltzmann scheme, to the entropy solution of the original conservation law as the relaxation times vanish.
  Moreover, we propose a qualitative analysis, clarifying the role of relaxation parameters and equilibrium coefficients in shaping solutions, via Chapman-Enskog expansion and looking at travelling wave solutions.
\end{abstract}






\section{Introduction}

In the present contribution, we are concerned with a novel \strong{relaxation approximation} of the following scalar non-linear conservation law, endowed with an initial datum: 
\begin{equation}\label{eq:conservationLaw}
    \begin{cases}
        \partial_{\timeVariable}\conservedVariable(\timeVariable, {\spaceVariable}) + \partial_{\spaceVariable}\flux(\conservedVariable(\timeVariable, {\spaceVariable})) = 0, \qquad \timeVariable>0, \quad &{\spaceVariable} \in \reals, \\
        \conservedVariable(0, {\spaceVariable}) = \conservedVariable^{\initial}({\spaceVariable}), \qquad &{\spaceVariable} \in \reals.
    \end{cases}
  \end{equation}
  The flux $\flux \in C^1(\reals)$ is non-constant and such that $\flux(0) = 0$.
  We assume that $\conservedVariable^{\initial} \in L^\infty(\reals)$,
  and we denote $\maximumInitialDatum\definitionEquality\|\conservedVariable^{\initial}\|_\infty$ the magnitude of the initial datum.
  We also assume non-trivial data by imposing $\maximumInitialDatum>0$.
  The theory of existence and uniqueness of the weak solution to \eqref{eq:conservationLaw} is classical, see \cite{godlewski1991hyperbolic}.
We consider the \strong{two-relaxation-times} (TRT) kinetic approximation
  \begin{equation}\label{system1}
  \left\{
  \begin{aligned}
  &\partial_t \distributionFunctionLetter_{\zeroVelocitySymbol}^{\vectorial{\varepsilon}}=\frac{1}{\es}(\distributionFunctionLetter_{\zeroVelocitySymbol}^{\atEquilibrium}(u^{\vectorial{\varepsilon}})-\distributionFunctionLetter_{\zeroVelocitySymbol}^{\vectorial{\varepsilon}}),\\
  &\partial_t \distributionFunctionLetter_{\positiveVelocitySymbol}^{\vectorial{\varepsilon}}+\latticeVelocity \partial_x \distributionFunctionLetter_{\positiveVelocitySymbol}^{\vectorial{\varepsilon}}=\frac{1}{2}\left(\frac{1}{\es}+\frac{1}{\ea}\right)(\distributionFunctionLetter_{\positiveVelocitySymbol}^{\atEquilibrium}(u^{\vectorial{\varepsilon}})-\distributionFunctionLetter_{\positiveVelocitySymbol}^{\vectorial{\varepsilon}})+\frac{1}{2}\left(\frac{1}{\es}-\frac{1}{\ea}\right)(\distributionFunctionLetter_{\negativeVelocitySymbol}^{\atEquilibrium}(u^{\vectorial{\varepsilon}})-\distributionFunctionLetter_{\negativeVelocitySymbol}^{\vectorial{\varepsilon}}),\\
  &\partial_t \distributionFunctionLetter_{\negativeVelocitySymbol}^{\vectorial{\varepsilon}}-\latticeVelocity \partial_x \distributionFunctionLetter_{\negativeVelocitySymbol}^{\vectorial{\varepsilon}}=\frac{1}{2}\left(\frac{1}{\es}+\frac{1}{\ea}\right)(\distributionFunctionLetter_{\negativeVelocitySymbol}^{\atEquilibrium}(u^{\vectorial{\varepsilon}})-\distributionFunctionLetter_{\negativeVelocitySymbol}^{\vectorial{\varepsilon}})+\frac{1}{2}\left(\frac{1}{\es}-\frac{1}{\ea}\right)(\distributionFunctionLetter_{\positiveVelocitySymbol}^{\atEquilibrium}(u^{\vectorial{\varepsilon}})-\distributionFunctionLetter_{\positiveVelocitySymbol}^{\vectorial{\varepsilon}}),\\
  \end{aligned}
  \right.
\end{equation}
where $\latticeVelocity>0$ is a kinetic velocity, $\ea>0$ and $\es>0$ are relaxation times\footnote{In the following, the fact that the relaxation times are positive is understood.}, $u^{\vectorial{\varepsilon}}=\distributionFunctionLetter_{\zeroVelocitySymbol}^{\vectorial{\varepsilon}}+\distributionFunctionLetter_{\positiveVelocitySymbol}^{\vectorial{\varepsilon}}+\distributionFunctionLetter_{\negativeVelocitySymbol}^{\vectorial{\varepsilon}}$, and we employ three equilibrium functions (also known as ``Maxwellian'' functions), associated to the characteristic velocities $c_{\zeroVelocitySymbol}=0$ and $c_{\pm}=\pm\latticeVelocity$,
\begin{equation}\label{eq:equilibriaForm}
\distributionFunctionLetter_{\zeroVelocitySymbol}^{\atEquilibrium}(u) = (1-2\equilibriumCoefficientLinear ) u
\qquad \text{and} \qquad
\distributionFunctionLetter_{\pm}^{\atEquilibrium}(u) = \equilibriumCoefficientLinear  u \pm \frac{\flux(\conservedVariable)}{2\latticeVelocity},
\end{equation}
where $\equilibriumCoefficientLinear $ is a fixed real coefficient.
Initial distribution functions are taken at equilibrium, that is:
\begin{equation}\label{f0}
  f_i^{\vectorial{\varepsilon}}(\timeVariable=0,x)=\distributionFunctionLetter_i^{\atEquilibrium} (u^{\initial}(x)) \qquad\text{for}\qquad  i=\zeroVelocitySymbol, \positiveVelocitySymbol, \negativeVelocitySymbol.
 \end{equation}
 
Alternatively, \eqref{system1}--\eqref{eq:equilibriaForm} can be recast on the ``\strong{moments}'' $u^{\vectorial{\varepsilon}} = \sum_{\indexVelocity}\distributionFunctionLetter_{\indexVelocity} =  \distributionFunctionLetter_{\zeroVelocitySymbol}^{\vectorial{\varepsilon}}+\distributionFunctionLetter_{\positiveVelocitySymbol}^{\vectorial{\varepsilon}}+\distributionFunctionLetter_{\negativeVelocitySymbol}^{\vectorial{\varepsilon}}$ (zero-order in the velocities), $v^{\vectorial{\varepsilon}} = \sum_{\indexVelocity}c_{\indexVelocity}\distributionFunctionLetter_{\indexVelocity} = \latticeVelocity (\distributionFunctionLetter_{\positiveVelocitySymbol}^{\vectorial{\varepsilon}}-\distributionFunctionLetter_{\negativeVelocitySymbol}^{\vectorial{\varepsilon}})$ (first-order in the velocities, anti-symmetric), and $w^{\vectorial{\varepsilon}} =\latticeVelocity^{-2}\sum_{\indexVelocity}c_{\indexVelocity}^2\distributionFunctionLetter_{\indexVelocity} = \distributionFunctionLetter_{\positiveVelocitySymbol}^{\vectorial{\varepsilon}} + \distributionFunctionLetter_{\negativeVelocitySymbol}^{\vectorial{\varepsilon}}$ (second-order in the velocities, symmetric), yielding
 \begin{equation}\label{eq:systemMoments}
      \left\{
  \begin{aligned}
  \partial_t u^{\vectorial{\varepsilon}}&+\partial_x v^{\vectorial{\varepsilon}}=0,\\
  \partial_t v^{\vectorial{\varepsilon}} &+ \lambda^2 \partial_x w^{\vectorial{\varepsilon}}=\frac{1}{\ea}\left(
    \flux(u^{\vectorial{\varepsilon}})-v^{\vectorial{\varepsilon}}\right),\\
  \partial_t w^{\vectorial{\varepsilon}} &+\partial_x v^{\vectorial{\varepsilon}}=\frac{1}{\es}\left(2\equilibriumCoefficientLinear u^{\vectorial{\varepsilon}}-w^{\vectorial{\varepsilon}}\right).
  \end{aligned}
  \right.
\end{equation} 
This form shows at a glance that, letting $\ea\to 0$, one formally recovers \eqref{eq:conservationLaw}.
Furthermore, it clarifies the subscripts employed for the relaxation times, for $\ea$ pertains to $v^{\vectorial{\varepsilon}}$, which is \strong{anti-symmetric} while switching $\distributionFunctionLetter_{\positiveVelocitySymbol}^{\vectorial{\varepsilon}}$ and $\distributionFunctionLetter_{\negativeVelocitySymbol}^{\vectorial{\varepsilon}}$, whereas $\es$ concerns $w^{\vectorial{\varepsilon}}$, a \strong{symmetric} moment.

  \begin{remark}[Multi-dimensional setting and more distribution functions]
    We only discuss the case of a 1D conservation law \eqref{eq:conservationLaw} for the sake of presentation.
    The multidimensional setting can be analyzed with the same tools as presented in \cite{aregba2026monotonicity}.
    Moreover, the case of more than three discrete velocities, all two-by-two opposite if non-zero, can be handled analogously.
  \end{remark}

Let us now explain the \strong{origin of \eqref{system1}} and recapitulate existing works on the \strong{convergence of relaxation approximations} to \eqref{eq:conservationLaw}.
To the best of our knowledge, \eqref{system1} has been first written down in a recent paper  \cite{aregba2026monotonicity}  by the authors, while tackling the convergence of a lattice Boltzmann scheme with two relaxation parameters.
This form of numerical scheme has been around for quite a long time, see for instance the works of Ginzburg and collaborators \cite{ginzburg2005equilibrium, ginzburg2007lattice, ginzburg2010optimal}.
Concerning the theoretical understanding of relaxation approximations, efforts concentrated on the BGK (for Bhatnagar-Gross-Krook) case\footnote{Also known as SRT (single-relaxation-time) within the lattice Boltzmann community.}. 
This setting corresponds to letting $\es=\ea=\varepsilon>0$, and the fact that the equilibrium functions are non decreasing implies that the system is \strong{quasi-monotone}, so that one can prove \strong{global existence} of the solution of \eqref{system1}--\eqref{eq:equilibriaForm}--\eqref{f0}, and the \strong{convergence} of $u^{\varepsilon}$ to the entropy solution of \eqref{eq:conservationLaw} when $\varepsilon \rightarrow 0$, see \cite{MR1643175}.

\begin{remark}[\lbmScheme{1}{2} model]
  Letting $\equilibriumCoefficientLinear   = \tfrac{1}{2}$, we obtain a \strong{two-velocity model} where $\distributionFunctionLetter_{\zeroVelocitySymbol}^{\vectorial{\varepsilon}} \equiv 0$ (alternatively, $w^{\vectorial{\varepsilon}}\equiv u^{\vectorial{\varepsilon}}$), thanks to \eqref{f0}.
  From \eqref{eq:systemMoments}, we clearly see that $\es$ does not play any role in this case, thus the theory by \cite{MR1643175} fully applies within this setting.
  For this reason, we are mostly interested by the case $\equilibriumCoefficientLinear  \neq \tfrac{1}{2}$.
\end{remark}

In the present contribution, we aim at following this approach: find sufficient conditions of \strong{quasi-monotonicity} (see \Cref{sec:quasiMonotonicity}) for the operator associated with right-hand side of \eqref{system1}, and prove that---under these conditions and for initial data of bounded total variation---\strong{global existence} of $\vectorial{f}^{\vectorial{\varepsilon}} = (f_{\zeroVelocitySymbol}^{\vectorial{\varepsilon}}, f_{\positiveVelocitySymbol}^{\vectorial{\varepsilon}}, f_{\negativeVelocitySymbol}^{\vectorial{\varepsilon}})^T$ and \strong{convergence} of $u^{{\vectorial{\varepsilon}}}$ to the entropy solution of \eqref{eq:conservationLaw} when $(\ea,\es) \rightarrow (0,0)$ hold.
For initial data only in $L^{\infty}$, the same results can be proved upon slightly strengthening the conditions, which is only technical but needed to ensure global existence of $\vectorial{f}^{\vectorial{\varepsilon}}$.

Furthermore, in \Cref{sec:qualitativeFeatures}, we investigate qualitative features of the relaxation system \eqref{system1}--\eqref{eq:equilibriaForm} thanks to a formal \strong{Chapman-Enskog expansion} and by considering particular solutions under the form of \strong{travelling waves}.
By doing so, we aim at clarifying the impact of each relaxation parameter and of $\equilibriumCoefficientLinear$ on the ``shape'' of the solution.

Our main results in the bounded total variation setting can be resumed as follows.
\begin{theorem}[Main results under BV assumption]\label{thm:hugeTheorem}
  Assume that $u^{\initial}\in L^{\infty}(\reals)\cap L^{1}(\reals)\cap \textnormal{BV}(\reals)$.
Let $\equilibriumCoefficientLinear  \in (0, \tfrac{1}{2}]$ and
\begin{align}
    &\text{if}\quad \equilibriumCoefficientLinear   = \tfrac{1}{2}, 
    \quad \text{then}
    \quad 
    \max_{u\in[-\maximumInitialDatum, \maximumInitialDatum]}
    \frac{|\flux^\prime(u)|}{\lambda}\leq 1; \qquad \text{or} \label{eq:monL212}\\
    &\text{if}\quad \equilibriumCoefficientLinear   \neq \tfrac{1}{2}, 
    \quad \text{then}
    \quad 
    \max_{u\in[-\maximumInitialDatum, \maximumInitialDatum]}
    \frac{|\flux^\prime(u)|}{\lambda}\leq \min\left(  \frac{2  \ea \equilibriumCoefficientLinear  }{\es},1-\frac{\ea}{\es}(1-2\equilibriumCoefficientLinear  )\right).\label{eq:monL2Else}
\end{align}
Then, there exists a unique $\vectorial{f}^{\vectorial{\varepsilon}}\in C^0([0,+\infty); L_{\textnormal{loc}}^1(\reals)^3) \cap L^{\infty}(\reals_+\times \reals)^3$ weak solution of \eqref{system1}--\eqref{eq:equilibriaForm}--\eqref{f0}.

Moreover, if \eqref{eq:monL212}--\eqref{eq:monL2Else} hold while $(\ea, \es)\to(0, 0)$, then $u^{\vectorial{\varepsilon}} \definitionEquality \distributionFunctionLetter_{\zeroVelocitySymbol}^{\vectorial{\varepsilon}}+\distributionFunctionLetter_{\positiveVelocitySymbol}^{\vectorial{\varepsilon}}+\distributionFunctionLetter_{\negativeVelocitySymbol}^{\vectorial{\varepsilon}}$ converges, up to subsequence extraction and for the topology of $C^0([0,+\infty); L_{\textnormal{loc}}^1(\reals)^3)$, to the unique weak entropy solution of \eqref{eq:conservationLaw}.
\end{theorem}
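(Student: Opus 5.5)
The proof follows the Natalini / Aregba-Driollet--Natalini strategy for quasi-monotone relaxation systems, with one change: existence and the a priori bounds are obtained as limits of a lattice Boltzmann (TRT) scheme rather than through a fixed point argument.

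\emph{Quasi-monotonicity.} Write the right-hand side of \eqref{system1} as $\vectorial{Q}(\vectorial{f})$, a function of $\vectorial{f}$ through $u=\sum_i f_i$, and compute its Jacobian. For $i\neq j$ we need $\partial Q_i/\partial f_j\geq 0$ on the relevant range $u\in[-\maximumInitialDatum,\maximumInitialDatum]$. The entries are:
\begin{itemize}
\item for $(\zeroVelocitySymbol,\pm)$: $(1-2\equilibriumCoefficientLinear)/\es$, which is nonnegative when $\equilibriumCoefficientLinear\leq\tfrac12$;
\item for $(\pm,\zeroVelocitySymbol)$: $\tfrac12(\tfrac1{\es}+\tfrac1{\ea})(\equilibriumCoefficientLinear\pm\tfrac{\flux'}{2\latticeVelocity})+\tfrac12(\tfrac1{\es}-\tfrac1{\ea})(\equilibriumCoefficientLinear\mp\tfrac{\flux'}{2\latticeVelocity})$;
\item for $(\pm,\mp)$: the same kind of combination, with an extra $-\tfrac12(\tfrac1{\es}-\tfrac1{\ea})$ coming from the $-f_\mp$ term.
\end{itemize}
Simplifying these entries should give exactly the two conditions in \eqref{eq:monL2Else}: $|\flux'|/\latticeVelocity\leq 2\ea\equilibriumCoefficientLinear/\es$ from the $(\pm,\zeroVelocitySymbol)$ entries, and $|\flux'|/\latticeVelocity\leq 1-\tfrac{\ea}{\es}(1-2\equilibriumCoefficientLinear)$ from the $(\pm,\mp)$ entries. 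When $\equilibriumCoefficientLinear=\tfrac12$ we have $f_\zeroVelocitySymbol\equiv0$, and only the BGK-type condition \eqref{eq:monL212} remains. I will also record that the equilibria are monotone, so that constant equilibrium states $\vectorial{f}^{\atEquilibrium}(\pm\maximumInitialDatum)$ are stationary solutions.

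\emph{Discrete scheme and uniform estimates.} Next I discretise \eqref{system1} with a splitting scheme: exact transport along the characteristics $c_i$, with $\latticeVelocity\timeStep=\spaceStep$, followed by the collision step. I take the collision to be the exact solution of the local relaxation ODE, i.e.\ a TRT collision with $\exs$ and $\exa$. Under the quasi-monotonicity conditions this collision map should be monotone, i.e.\ nondecreasing in each argument. I would prove this by checking the sign pattern of $\expo^{\timeStep J}$ on the conserved quantity. If that turns out to be awkward, I fall back on a first-order explicit collision with $\timeStep$ small relative to $\min(\ea,\es)$. Pure transport is trivially monotone, so the full scheme is monotone and conservative, and Crandall--Tartar then gives:
\begin{itemize}
\item the $L^1$ contraction;
\item the invariant region $\vectorial{f}^{\atEquilibrium}(-\maximumInitialDatum)\leq\vectorial{f}\leq\vectorial{f}^{\atEquilibrium}(\maximumInitialDatum)$, by comparison with the constant stationary solutions;
\item TV bounds, from translation invariance together with the $L^1$ contraction, uniform in the mesh \emph{and} in $\vectorial{\varepsilon}$;
\item $L^1$ time-equicontinuity, $\|\vectorial{f}(t)-\vectorial{f}(s)\|_{L^1}\leq C|t-s|$, from the TV bound and equilibrium initial data. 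Here I need the relaxation terms to be controlled by $\partial_t$ at $t=0$, which holds because the initial data are at equilibrium, so $\vectorial{Q}=0$ initially.
\end{itemize}
Helly's theorem and Kolmogorov compactness then let the scheme converge as $\timeStep\to0$ to a weak solution in $C^0([0,\infty);L^1_{\textnormal{loc}})\cap L^\infty$. Uniqueness follows from the $L^1$ contraction for the continuous semilinear system: with the bounds in hand the source is Lipschitz, so a Gronwall argument applies.

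\emph{Relaxation limit.} From $\vectorial{\varepsilon}$-uniform BV and equicontinuity bounds, I extract a subsequence with $\vectorial{f}^{\vectorial{\varepsilon}}\to\vectorial{f}$. The moment system \eqref{eq:systemMoments} gives $\flux(u^{\vectorial\varepsilon})-v^{\vectorial\varepsilon}=\ea(\partial_tv+\latticeVelocity^2\partial_xw)\to0$ in distributions. To show the limit is an entropy solution, I use the standard kinetic entropies: for a quasi-monotone system with constant stationary equilibria, Kružkov entropies $\sum_i|f_i-f_i^{\atEquilibrium}(k)|$ satisfy
\[\sum_i\partial_t|f_i-f^{\atEquilibrium}_i(k)|+c_i\partial_x|f_i-f^{\atEquilibrium}_i(k)|\leq0.\]
This follows either by passing the discrete Crandall--Majda inequality to the limit, or by the Natalini argument, using that $\sum_i\mathrm{sgn}(f_i-f_i^{\atEquilibrium}(k))Q_i\leq0$ holds thanks to quasi-monotonicity plus conservation. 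For this limit I need the convergence $f_i\to f_i^{\atEquilibrium}(u)$ for every component, not only for $v$. For $w$, i.e.\ $f_\zeroVelocitySymbol$, it follows from $\es\to0$ and the $w$-equation in the same way. Letting $\vectorial\varepsilon\to0$ then yields Kružkov's inequalities for $u$, and uniqueness of the entropy solution identifies the limit.

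\emph{Main obstacle.} The hardest step will be showing that the TRT collision operator is monotone and that the entropy dissipation $\sum_i\mathrm{sgn}(f_i-f_i^{\atEquilibrium}(k))Q_i\leq0$ holds when $\ea\neq\es$. The relaxation matrix is then not a multiple of the identity, so the standard BGK proofs do not apply directly. My first attempt is to exploit that the Jacobian has nonnegative off-diagonal entries and zero column sums. Zero column sums come from conservation, which gives $\sum_iQ_i=0$. These two properties make $\expo^{\timeStep J}$ a nonnegative stochastic matrix along every segment, so the collision map is order-preserving and $L^1$-contractive.
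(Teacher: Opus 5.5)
Your proposal is correct. It shares the paper's architecture: an exact-relaxation/shift splitting with $\lambda\Delta t=\Delta x$; monotonicity, which gives the invariant box $[\minvec,\maxvec]$, $\ell^1$-contraction, TV bounds, and a time-Lipschitz bound uniform in $(\ea,\es)$ because the first collision acts on equilibrium data; then $\Delta x\to0$ followed by $(\ea,\es)\to(0,0)$. Two key lemmas, however, are obtained differently.

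\emph{Monotonicity of the exact TRT collision.} The paper sets $\omega_{\mathsf a}=1-e^{-\Delta t/\ea}$, $\omega_{\mathsf s}=1-e^{-\Delta t/\es}$ and checks the explicit lattice Boltzmann monotonicity conditions of its companion work, using that $y\mapsto y(1-e^{-\tau/y})$ is increasing. You argue structurally, which works once you spell out one step you leave implicit. Write $G(f)=A(f^{\textnormal{eq}}(u)-f)$ with $u=\mathbf 1^{\mathsf T}f$ conserved by relaxation. The collision map is then $f\mapsto f^{\textnormal{eq}}(u)+e^{-\Delta t A}(f-f^{\textnormal{eq}}(u))$. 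Because the Jacobian $J$ of $G$ depends on $f$ only through $u$, the variational equation along a relaxation trajectory has constant coefficients, so the Jacobian of the collision map is exactly $e^{\Delta t J(u)}$. Nonnegative off-diagonal entries and $\mathbf 1^{\mathsf T}J=0$ make this matrix nonnegative and column-stochastic. Averaging it over the segment inside the box then gives order preservation and $\ell^1$-contraction at once, which is what the paper's sketch checks entry by entry. For $\mathscr L=\tfrac12$, apply this on the invariant subspace $\{f_{\circ}=0\}$, as you indicate: the full $3\times3$ Jacobian would additionally need $|\varphi'|/\lambda\le\ea/\es$. Your route is shorter and extends to any conservative relaxation whose Jacobian depends only on conserved moments. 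The paper's route instead yields explicit discrete conditions tied to the lattice Boltzmann literature.

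\emph{The limit lies at equilibrium.} The paper proves the quantitative bound $\|f^{\textnormal{eq}}(u^{\vectorial{\varepsilon}})-f^{\vectorial{\varepsilon}}\|_1\le2\lambda\,\textnormal{TV}(u^\circ)\max(\ea,\es)$ through a discrete recursion with factor $e^{-\Delta t/\max(\ea,\es)}$. You instead combine the uniform compactness with the moment equations: $\varphi(u^{\vectorial{\varepsilon}})-v^{\vectorial{\varepsilon}}=\ea(\partial_tv^{\vectorial{\varepsilon}}+\lambda^2\partial_xw^{\vectorial{\varepsilon}})\to0$ and $2\mathscr Lu^{\vectorial{\varepsilon}}-w^{\vectorial{\varepsilon}}=\es(\partial_tw^{\vectorial{\varepsilon}}+\partial_xv^{\vectorial{\varepsilon}})\to0$ in $\mathcal D'$. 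This is simpler and sufficient for the theorem, but gives no rate.

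\emph{Entropy.} Your continuous alternative, $\sum_i\operatorname{sgn}(f_i-f_i^{\textnormal{eq}}(k))G_i(f)\le0$, follows from the same sign and column-sum structure for $k\in[-\mu_\infty,\mu_\infty]$. That range suffices for Kru\v{z}kov's conditions, once monotonicity of the equilibria turns $\sum_i|f_i^{\textnormal{eq}}(u)-f_i^{\textnormal{eq}}(k)|$ into $|u-k|$.
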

\begin{remark}[On the ``choice'' of $\latticeVelocity$]
    Note that inequalities \eqref{eq:monL212}--\eqref{eq:monL2Else} hold true provided that kinetic velocity $\latticeVelocity>0$ is taken \strong{large enough}.
    If one sees the relaxation system \eqref{system1}--\eqref{eq:equilibriaForm}--\eqref{f0} as an ``approximation'' to \eqref{eq:conservationLaw}---which is aimed at converging with $(\ea, \es)$ in the spirit of the second part of \Cref{thm:hugeTheorem}---then $\latticeVelocity$ can be chosen arbitrarily but to satisfy a \strong{sub-characteristic condition} like \eqref{eq:monL212}.
    This fact has been highlighted for quite a long time in the context of relaxation systems, dating back (at least) to \cite{liu1987hyperbolic}.
\end{remark}
\begin{remark}[On the ratio between $\ea$ and $\es$]\label{rem:onTherationOfpar}
    Note that as we want that \eqref{eq:monL2Else} be fulfilled when $(\ea, \es)\to(0, 0)$, this fact imposes a certain way they must \strong{go to zero together}.
    This is due to the fact that the right-hand side of \eqref{eq:monL2Else} depends on the relaxation parameters through their \strong{ratio}. 
    Of course, our condition is \strong{only sufficient} for convergence, and we cannot confirm or exclude convergence in other cases.
    If, while taking the limit, we face $\ea/\es\to 0$,  \eqref{eq:monL2Else} cannot be fulfilled since $\max_{u\in[-\maximumInitialDatum, \maximumInitialDatum]}
    {|\flux^\prime(u)|}/{\lambda}>0$.
    On the other hand, if $\ea/\es\to +\infty$,  \eqref{eq:monL2Else} cannot be fulfilled either, for the same reason: the right-hand side of the inequality tends to $-\infty$.
    This shows that \Cref{thm:hugeTheorem} concerns the case where 
    \begin{equation*}
    \lim_{(\ea, \es)\to (0, 0)}
        \frac{\ea}{\es}= c\in (0, +\infty).
    \end{equation*}
    More precisely, if the Courant number $\max_{u\in[-\maximumInitialDatum, \maximumInitialDatum]}
    {|\flux^\prime(u)|}/{\lambda}>0$ and $\equilibriumCoefficientLinear  $ are considered fixed, and such that they fulfill 
    \begin{equation}\label{eq:minimalL}
      \equilibriumCoefficientLinear   
      \geq 
      \frac{\max_{u\in[-\maximumInitialDatum, \maximumInitialDatum]}
    {|\flux^\prime(u)|}}{2\latticeVelocity},
    \end{equation}
    the \strong{sector of the first quadrant} in the $(\es, \ea)$-plane for which \eqref{eq:monL2Else} holds is the one between the straight lines of slopes
    \begin{equation*}
      \frac{\max_{u\in[-\maximumInitialDatum, \maximumInitialDatum]}
    {|\flux^\prime(u)|}}{2\latticeVelocity\equilibriumCoefficientLinear  }
    \quad 
    \text{(below the BGK line)}
    \qquad \text{and}\qquad 
    \frac{1-\max_{u\in[-\maximumInitialDatum, \maximumInitialDatum]}
    \frac{|\flux^\prime(u)|}{\lambda}}{1-2\equilibriumCoefficientLinear  }
    \quad \text{(above the BGK line)}.
    \end{equation*}
\end{remark}

Before recapitulating the results obtained  for initial data that are not necessarily of bounded total variation, let us comment on the way the proof of \Cref{thm:hugeTheorem} is conducted.
This proof relies on two main steps as below.
\begin{enumerate}
    \item We introduce a numerical scheme for \eqref{system1} with suitable properties that are uniform in $(\ea, \es)$, including a kinetic entropy inequality. 
    We prove \strong{its convergence} as in \cite{aregba2026monotonicity} in the limit of small discretization parameters, and these properties ``pass to the limit'', and thus hold for $\vectorial{f}^{\vectorial{\varepsilon}}$. 
    This achieves the first part of the claim on the global existence of $\vectorial{f}^{\vectorial{\varepsilon}}$, which bypasses the arguments of \cite{MR1409928, MR1643175}. 
    This is discussed in \Cref{sec:numericalScheme}.
    \item We use the properties that passed to the limit to extract a converging subsequence from $(\vectorial{f}^{\vectorial{\varepsilon}})_{\vectorial{\varepsilon}}$ in the limit $(\ea, \es)\to (0, 0)$, see \Cref{sec:convergenceEps}.
\end{enumerate}

The main results that we obtain for initial data being only essentially bounded are as follows.
\begin{theorem}[Main results under $L^{\infty}$ assumption]\label{thm:hugeTheoremLinf}
  Assume that $u^{\initial}\in L^{\infty}(\reals)$.
  Also assume that 
\begin{align}
    \text{if}\quad
    \es<\ea, 
    \quad 
    &\text{then}
    \quad 
    \equilibriumCoefficientLinear\in 
    \Bigl [
    1-\frac{\es}{\ea}
    , \frac{1}{2}\Bigr ]; \quad \text{or} \label{eq:technical1}\\
    \text{if}\quad
    \es\geq \ea, 
    \quad 
    &\text{then}
    \quad 
    \equilibriumCoefficientLinear\in 
    \Bigl (0
    , \frac{1}{2}\Bigr ], \label{eq:technical2}
  \end{align}
  and that \eqref{eq:monL212}--\eqref{eq:monL2Else} are fulfilled.
Then, there exists a unique $\vectorial{f}^{\vectorial{\varepsilon}}\in C^0([0,+\infty); L_{\textnormal{loc}}^1(\reals)^3) \cap L^{\infty}(\reals_+\times \reals)^3$ weak solution of \eqref{system1}--\eqref{eq:equilibriaForm}--\eqref{f0}.

Moreover, if \eqref{eq:monL212}--\eqref{eq:monL2Else}--\eqref{eq:technical1}--\eqref{eq:technical2} hold while $(\ea, \es)\to(0, 0)$, then $u^{\vectorial{\varepsilon}} \definitionEquality \distributionFunctionLetter_{\zeroVelocitySymbol}^{\vectorial{\varepsilon}}+\distributionFunctionLetter_{\positiveVelocitySymbol}^{\vectorial{\varepsilon}}+\distributionFunctionLetter_{\negativeVelocitySymbol}^{\vectorial{\varepsilon}}$ converges, up to subsequence extraction and for the topology of $C^0([0,+\infty); L_{\textnormal{loc}}^1(\reals)^3)$, to the unique weak entropy solution of \eqref{eq:conservationLaw}.
\end{theorem}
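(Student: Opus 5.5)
The proof follows the same two-step strategy as for \Cref{thm:hugeTheorem}. The one new ingredient is a way to obtain compactness without any BV bound on the initial datum. This is exactly where the technical conditions \eqref{eq:technical1}--\eqref{eq:technical2} enter.

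First, we show that \eqref{eq:monL212}--\eqref{eq:monL2Else} make the relaxation operator $\vectorial{f}\mapsto \vectorial{R}(\vectorial{f})$ on the right-hand side of \eqref{system1} quasi-monotone on the invariant box $[\minvec,\maxvec]$. This box is built from the values of $\distributionFunctionLetter_i^{\atEquilibrium}$ on $[-\maximumInitialDatum,\maximumInitialDatum]$, and quasi-monotone means that off-diagonal Jacobian entries are nonnegative there. We then discretize \eqref{system1} by a splitting scheme: exact transport of $\distributionFunctionLetter_{\pm}$ on a lattice with $\spaceStep=\latticeVelocity\timeStep$, followed by the exact (or implicit) solution of the relaxation ODE over $\timeStep$. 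The collision step involves the factors $\exs$ and $\exa$. We check that it is monotone (order preserving) and $L^1$-contracting, and that it preserves the box, uniformly in $(\ea,\es)$. Conditions \eqref{eq:technical1}--\eqref{eq:technical2} are what we expect to need so that the box $[\minvec,\maxvec]$ is invariant under the collision step for \emph{all} times. In the BV case, this invariance was instead replaced by a total variation estimate combined with $L^1$ contraction. Concretely, the condition $\equilibriumCoefficientLinear\ge 1-\es/\ea$ when $\es<\ea$ should guarantee that $\distributionFunctionLetter_{\zeroVelocitySymbol}$ is pushed into its equilibrium range faster than $\distributionFunctionLetter_{\pm}$ can leave theirs. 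We verify this by computing the explicit collision matrix in the moment variables $(u,v,w)$ and checking nonnegativity of its entries composed with the equilibria.

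Next, we treat $L^\infty$ data by approximation. We take $u^{\initial}_k\in L^1\cap \textnormal{BV}$ with $\|u^{\initial}_k\|_\infty\le\maximumInitialDatum$ and $u^{\initial}_k\to u^{\initial}$ in $L^1_{\textnormal{loc}}$. \Cref{thm:hugeTheorem} gives solutions $\vectorial{f}^{\vectorial\varepsilon}_k$. The $L^1$-contraction, in its local form with finite propagation speed $\latticeVelocity$, makes $(\vectorial{f}^{\vectorial\varepsilon}_k)_k$ Cauchy in $C^0([0,T];L^1_{\textnormal{loc}})$. The uniform $L^\infty$ bounds from the invariant box pass to the limit. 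Together these give existence, and $L^1_{\textnormal{loc}}$ stability gives uniqueness for the $L^\infty$ datum.

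For the limit $(\ea,\es)\to(0,0)$, BV compactness is no longer available. We would instead use the kinetic entropy inequalities, which are inherited from the scheme thanks to monotonicity (Kru\v{z}kov-type entropies built from the order-preserving collision). These yield, for each convex entropy $\eta$, that $\partial_t \eta(u^{\vectorial\varepsilon})+\partial_x q(u^{\vectorial\varepsilon})$ is the sum of a compact part in $H^{-1}_{\textnormal{loc}}$ and a bounded measure. For this we need the relaxation estimate $\|\vectorial{f}^{\vectorial\varepsilon}-\vectorial{f}^{\atEquilibrium}(u^{\vectorial\varepsilon})\|_{L^2_{\textnormal{loc}}}^2=O(\ea+\es)$, derived from the dissipation of a kinetic entropy with $\ea/\es$ kept in a compact subset of $(0,\infty)$ (\Cref{rem:onTherationOfpar}). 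If $\flux$ is genuinely nonlinear, compensated compactness then gives strong convergence. In general, we would rather conclude with the $L^1$-contraction argument: approximate $u^{\initial}$ by BV data, apply the convergence part of \Cref{thm:hugeTheorem} to each approximant, and pass $k\to\infty$ using the contraction, which is uniform in $\vectorial\varepsilon$, together with the $L^1$ stability of entropy solutions. This diagonal argument is cleaner and is what I would try first.

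The main obstacle is the first step: proving that the box is invariant under the relaxation flow exactly under \eqref{eq:technical1}--\eqref{eq:technical2}, uniformly in $\timeStep$. This is where the asymmetry between $\es$ and $\ea$ genuinely matters.
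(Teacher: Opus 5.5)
Your argument is correct in the form you say you would actually pursue. The existence part, however, takes a different route from the paper's.

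\textbf{The paper's route.} The paper does not construct $\vectorial{f}^{\vectorial{\varepsilon}}$ by approximation. It starts from the local bounded solution of the semilinear theory (\Cref{prop:localExistenceQuasiLinear}). It then freezes the nonlinear part of the equilibria outside $[-\maximumInitialDatum,\maximumInitialDatum]$, so that the modified relaxation term $\vectorial{G}^{\#}$ is quasi-monotone on all of $\reals^3$ (\Cref{prop:quasiMonotonicityFrozenEquil}); this is the only place where \eqref{eq:technical1}--\eqref{eq:technical2} enter. Finally it compares with the stationary states $\minvec,\maxvec$ via \Cref{prop:comparison}. Whole-space quasi-monotonicity is needed because the local solution is not known a priori to stay in the box, and the resulting $L^\infty$ bound rules out blow-up.

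\textbf{Your route.} You take the global box-valued solutions that \Cref{thm:hugeTheorem} gives for $L^1\cap\textnormal{BV}$ approximants with $\lVert u^{\initial}_k\rVert_\infty\leq\maximumInitialDatum$. You show they are Cauchy in $C^0([0,T];L^1_{\textnormal{loc}})$ by the local $L^1$ contraction with speed $\latticeVelocity$. This is available either as Proposition 2.1 of \cite{MR1409928}, or from the cell-wise $\ell^1$ contraction of the collision, iterated over the domain of dependence and passed to the limit $\spaceStep\to0$. Since the $\distributionFunctionLetter_i^{\atEquilibrium}$ are non-decreasing with slopes summing to one, the initial discrepancy equals $\lVert u^{\initial}_k-u^{\initial}_l\rVert_{L^1}$ on the enlarged set. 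This is the same estimate the paper uses for the first term of its triangle inequality, applied one step earlier.

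\textbf{What each route buys.} You only ever compare box-valued solutions, so quasi-monotonicity on $[\minvec,\maxvec]$ suffices. You need neither the continuation theory nor \eqref{eq:technical1}--\eqref{eq:technical2}. This is consistent with the paper: the condition it actually derives for $\vectorial{G}^{\#}$, namely $\equilibriumCoefficientLinear\geq\tfrac12(1-\es/\ea)$, already follows from \eqref{eq:monL2Else}, whose right-hand side must be non-negative. In exchange, the paper's route stays within the continuous semilinear framework and does not rely on the numerical scheme for existence. Your diagonal argument for $(\ea,\es)\to(0,0)$ is exactly the paper's three-term triangle inequality. The compensated-compactness detour is unnecessary.

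Two points to fix. First, your opening step misattributes the technical conditions. Invariance of $[\minvec,\maxvec]$ under the collision is already \eqref{eq:compactInvariant} in \Cref{prop:propertiesEnsuringExtractionScheme}, obtained from \eqref{eq:monL212}--\eqref{eq:monL2Else} alone. BV serves only for compactness as $\spaceStep\to0$ (total-variation and time-equicontinuity bounds), not as a substitute for invariance. So what you call the main obstacle is not one. Your heuristic about $\distributionFunctionLetter_{\zeroVelocitySymbol}$ relaxing faster is also not what those conditions encode; in the paper they encode quasi-monotonicity of $\vectorial{G}^{\#}$ outside the box.

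Second, the quasi-monotone contraction only compares solutions already known to lie in the box. Uniqueness in the full class $L^{\infty}$ claimed in the statement should instead come from the Lipschitz--Gronwall uniqueness of bounded solutions in the semilinear theory (\Cref{prop:localExistenceQuasiLinear}), since $\vectorial{G}$ is locally Lipschitz.
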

The proof of this result, detailed in \Cref{sec:gettingRidOfBV}, relies on a \strong{regularization of the initial datum}, since it belongs to $L^1_{\textnormal{loc}}(\reals)$.
However, the additional condition \eqref{eq:technical1} must be considered to ensure global existence of $\vectorial{f}^{\vectorial{\varepsilon}}$ with our technique of proof, which is based on the approach by \cite{MR1409928, MR1643175}.
Once global existence is ensured, we can apply the results of \Cref{thm:hugeTheorem} on the solution stemming from the regularized initial datum.

\section{Quasi-monotonicity}\label{sec:quasiMonotonicity}

As highlighted in the introduction, \strong{quasi-monotonicity} plays a central role.
We consider the definition from \cite{MR1409928, MR1643175}.
\begin{definition}[Quasi-monotone function]\label{def:quasiMon}
Let $q>1$, $Q\subset \reals^q$ be an interval with non-empty interior, and $\vectorial{G} : Q\to \reals^q$.
    Then, $\vectorial{G}$ is said to be \strong{quasi-monotone} (on $Q$) if each component $G_i$ is non-decreasing in its $j$-th argument for $j\neq i$.
\end{definition}

Here, we are concerned with the function $\vectorial{G}$ being the right-hand side of \eqref{system1}, that is the ``relaxation'' term.
Let us consider this setting.
   \begin{itemize}
     \item Consider $\equilibriumCoefficientLinear  =\frac{1}{2}$ (\lbmScheme{1}{2} case).
     We note that derivatives of the right-hand side of \eqref{system1} with respect to distribution functions depend on $\vectorial{f}^{\vectorial{\varepsilon}}$ only through $u^{\vectorial{\varepsilon}}$.
    For this reason, the quasi-monotonicity on any given $Q = [\underline{q}_{\zeroVelocitySymbol}, \overline{q}_{\zeroVelocitySymbol}]\times [\underline{q}_{\positiveVelocitySymbol}, \overline{q}_{\positiveVelocitySymbol}] \times [\underline{q}_{\negativeVelocitySymbol}, \overline{q}_{\negativeVelocitySymbol}]\subset  \reals^{3}$ is equivalent to request derivatives of $G_i$ with respect to its $j$-th argument ($j\neq i$) be non-negative when $u^{\vectorial{\varepsilon}}\in [\underline{q}_{\zeroVelocitySymbol}+\underline{q}_{\positiveVelocitySymbol}+\underline{q}_{\negativeVelocitySymbol}, \overline{q}_{\zeroVelocitySymbol}+\overline{q}_{\positiveVelocitySymbol} +\overline{q}_{\negativeVelocitySymbol}] \subset \reals$.
    We thus consider $\underline{q}_{\zeroVelocitySymbol}+\underline{q}_{\positiveVelocitySymbol}+\underline{q}_{\negativeVelocitySymbol}=-\maximumInitialDatum$ and $\overline{q}_{\zeroVelocitySymbol}+\overline{q}_{\positiveVelocitySymbol} +\overline{q}_{\negativeVelocitySymbol} = \maximumInitialDatum$.
    This holds under the subcharacteristic condition 
   \begin{equation}\label{subchar}
   \max_{u\in
   [-\mu_\infty,\mu_\infty]}\frac{|\flux^\prime(u)|}{\lambda}\leq 1.
 \end{equation}
   To see this, observe that the two equations left in \eqref{system1} read
   \begin{equation*}
    \partial_t \distributionFunctionLetter_{\pm}^{\vectorial{\varepsilon}}\pm\latticeVelocity \partial_x \distributionFunctionLetter_{\pm}^{\vectorial{\varepsilon}}=\pm\frac{1}{2\ea}\left( \frac{\flux(u^{\vectorial{\varepsilon}})}{\latticeVelocity}-(\distributionFunctionLetter_{\positiveVelocitySymbol}^{\vectorial{\varepsilon}}-\distributionFunctionLetter_{\negativeVelocitySymbol}^{\vectorial{\varepsilon}})\right).
   \end{equation*}
   The subcharacteristic equation comes from differentiating the right-hand side of the previous expression with respect to $\distributionFunctionLetter_{\mp}^{\vectorial{\varepsilon}}$, and impose they be non-negative.   
   \item Consider $\equilibriumCoefficientLinear  \not=\frac{1}{2}$, we proceed as in the previous case with slightly more involved computations, and request that $1-2\equilibriumCoefficientLinear  \geq 0$ and 
   \begin{equation}\label{cond1}
     \max_{u\in [-\mu_\infty,\mu_\infty]}\frac{|\flux^\prime(u)|}{\lambda}\leq \min\left(  \frac{2  \ea \equilibriumCoefficientLinear }{\es},1-\frac{\ea}{\es}(1-2\equilibriumCoefficientLinear )\right).
 \end{equation}
\begin{itemize}
    \item If $\frac{\ea}{\es}\leq 1$, the condition \eqref{cond1} becomes $\frac{|\flux^\prime(u)|}{\lambda}\leq 
\frac{2  \ea \equilibriumCoefficientLinear }{\es}$. If this holds, we deduce that $\equilibriumCoefficientLinear > 0$, thus $\equilibriumCoefficientLinear \in (0, \tfrac{1}{2}]$. 
\item If $\frac{\ea}{\es}\geq 1$, the condition \eqref{cond1} becomes $\frac{|\flux^\prime(u)|}{\lambda}\leq 1-\frac{\ea}{\es}(1-2\equilibriumCoefficientLinear )$. A necessary condition to have this last inequality is that $1-\frac{\ea}{\es}(1-2\equilibriumCoefficientLinear )> 0$, which entails 
\[
  0<1 \leq \frac{\ea}{\es} < \frac{1}{1-2\equilibriumCoefficientLinear }.
\]
For this to happen, we must have $\equilibriumCoefficientLinear  \in (0, \tfrac{1}{2}]$ as well.
\end{itemize}
 \end{itemize}
We can easily prove the following.
\begin{lemma}
    Let $\equilibriumCoefficientLinear\in(0, \frac{1}{2}]$ and \eqref{subchar} or \eqref{cond1} be fulfilled according to the value of $\equilibriumCoefficientLinear $.
    Then the equilibrium functions are monotone non-decreasing on $[-\mu_\infty,\mu_\infty]$.
\end{lemma}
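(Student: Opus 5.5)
We check the three equilibrium functions of \eqref{eq:equilibriaForm} one at a time, using that monotonicity on $[-\mu_\infty,\mu_\infty]$ amounts to a nonnegative derivative there; this is valid because $\flux\in C^1(\reals)$.

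The function $\distributionFunctionLetter_{\zeroVelocitySymbol}^{\atEquilibrium}(u)=(1-2\equilibriumCoefficientLinear)u$ is linear with slope $1-2\equilibriumCoefficientLinear$. This slope is nonnegative exactly because $\equilibriumCoefficientLinear\le\tfrac12$. When $\equilibriumCoefficientLinear=\tfrac12$ the function vanishes identically, which is trivially non-decreasing.

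For the two others we have
\begin{equation*}
\frac{\differential}{\differential u}\distributionFunctionLetter_{\pm}^{\atEquilibrium}(u)=\equilibriumCoefficientLinear\pm\frac{\flux'(u)}{2\latticeVelocity}\ \ge\ \equilibriumCoefficientLinear-\frac{|\flux'(u)|}{2\latticeVelocity},
\end{equation*}
so both are non-decreasing on $[-\mu_\infty,\mu_\infty]$ as soon as $\max_{u\in[-\mu_\infty,\mu_\infty]}|\flux'(u)|/\latticeVelocity\le 2\equilibriumCoefficientLinear$. This inequality is the only thing left to derive from the hypotheses, and we split according to the value of $\equilibriumCoefficientLinear$.

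\emph{Case $\equilibriumCoefficientLinear=\tfrac12$.} The required bound is $\max|\flux'|/\latticeVelocity\le 1$, which is precisely the subcharacteristic condition \eqref{subchar}.

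\emph{Case $\equilibriumCoefficientLinear\neq\tfrac12$.} Here \eqref{cond1} holds, and we split further on the ratio $r\definitionEquality\ea/\es$, as in the discussion preceding the statement.
\begin{itemize}
\item If $r\le1$, the first term of the minimum in \eqref{cond1} gives $\max|\flux'|/\latticeVelocity\le 2r\equilibriumCoefficientLinear\le 2\equilibriumCoefficientLinear$, using $\equilibriumCoefficientLinear>0$.
\item If $r\ge1$, the second term gives $\max|\flux'|/\latticeVelocity\le 1-r(1-2\equilibriumCoefficientLinear)$. Since $1-2\equilibriumCoefficientLinear\ge0$ and $r\ge1$, this is at most $1-(1-2\equilibriumCoefficientLinear)=2\equilibriumCoefficientLinear$.
\end{itemize}
In every case the required bound holds, and the equilibrium functions are non-decreasing on $[-\mu_\infty,\mu_\infty]$.

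A slicker route is to observe that the function $r\mapsto\min(2r\equilibriumCoefficientLinear,\,1-r(1-2\equilibriumCoefficientLinear))$ attains its maximum at $r=1$, where both terms equal $2\equilibriumCoefficientLinear$. The right-hand side of \eqref{cond1} is therefore always at most $2\equilibriumCoefficientLinear$, and this is simply the BGK threshold.

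There is no genuine obstacle. The only point needing care is the case $r\ge1$, where the sign of $1-2\equilibriumCoefficientLinear$ must be used to bound the second term of the minimum by $2\equilibriumCoefficientLinear$.
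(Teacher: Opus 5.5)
Your proof is correct and takes essentially the same approach as the paper. The paper states the lemma without proof, but the key step you isolate, splitting on $\ea/\es\le 1$ versus $\ea/\es\ge 1$ to get $\max_{u\in[-\maximumInitialDatum,\maximumInitialDatum]}|\flux'(u)|/\latticeVelocity\le 2\equilibriumCoefficientLinear$ from \eqref{cond1}, is exactly the chain of inequalities used in the proof of \Cref{lemma:consQuasiMonCR}; together with $\frac{\differential}{\differential u}\distributionFunctionLetter^{\atEquilibrium}_{\pm}=\equilibriumCoefficientLinear\pm\flux'/(2\latticeVelocity)$ and the slope $1-2\equilibriumCoefficientLinear\ge 0$ of $\distributionFunctionLetter^{\atEquilibrium}_{\zeroVelocitySymbol}$, this is the intended argument.
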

By virtue of the previous lemma, under $\equilibriumCoefficientLinear\in(0, \frac{1}{2}]$ and \eqref{subchar}--\eqref{cond1}, we can introduce the notations
\begin{equation*}
  \minvec 
  \definitionEquality
  \begin{pmatrix}
    \distributionFunctionLetter_{\zeroVelocitySymbol}^\atEquilibrium(-\maximumInitialDatum)\\
    \distributionFunctionLetter_{\positiveVelocitySymbol}^\atEquilibrium(-\maximumInitialDatum)\\
    \distributionFunctionLetter_{\negativeVelocitySymbol}^\atEquilibrium(-\maximumInitialDatum)
  \end{pmatrix}
  \qquad 
  \text{and}\qquad 
  \maxvec 
  \definitionEquality
  \begin{pmatrix}
    \distributionFunctionLetter_{\zeroVelocitySymbol}^\atEquilibrium(\maximumInitialDatum)\\
    \distributionFunctionLetter_{\positiveVelocitySymbol}^\atEquilibrium(\maximumInitialDatum)\\
    \distributionFunctionLetter_{\negativeVelocitySymbol}^\atEquilibrium(\maximumInitialDatum)
  \end{pmatrix},
\end{equation*}
and especially claim that \Cref{def:quasiMon} holds for a function of the three distribution functions being the right-hand side of \eqref{system1}, taking $Q = [\minvec, \maxvec]$.
This is resumed as follows.
\begin{proposition}[Sufficient quasi-monotonicity conditions for the relaxation system]\label{mono1}
Let 
\begin{equation}\label{L2cond}
    \equilibriumCoefficientLinear \in  ( 0,\tfrac{1}{2}  ]
\end{equation}
and assume that \eqref{subchar} holds when
$\equilibriumCoefficientLinear  = \tfrac{1}{2}$, and \eqref{cond1}
holds when $\equilibriumCoefficientLinear  \neq \tfrac{1}{2}$. Then the right-hand side function in \eqref{system1} is quasi-monotone on
   $[\minvec,\maxvec]$.
\end{proposition}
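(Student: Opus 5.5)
The plan is to compute the off-diagonal partial derivatives of the relaxation term $\vectorial{G}=(G_{\zeroVelocitySymbol},G_{\positiveVelocitySymbol},G_{\negativeVelocitySymbol})$ in \eqref{system1} with respect to the distribution functions. We then check their sign for every $\vectorial{f}\in[\minvec,\maxvec]$. Since $u=f_{\zeroVelocitySymbol}+f_{\positiveVelocitySymbol}+f_{\negativeVelocitySymbol}$, any $\vectorial{f}\in[\minvec,\maxvec]$ satisfies
\[
u\in\Bigl[\textstyle\sum_i \distributionFunctionLetter_i^{\atEquilibrium}(-\maximumInitialDatum),\sum_i \distributionFunctionLetter_i^{\atEquilibrium}(\maximumInitialDatum)\Bigr]=[-\maximumInitialDatum,\maximumInitialDatum],
\]
because the equilibria sum to the identity. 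So it suffices to prove non-negativity of the derivatives for all $u\in[-\maximumInitialDatum,\maximumInitialDatum]$. As the paper observes, these derivatives depend on $\vectorial{f}$ only through $u$ via $\flux'(u)$.

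First I compute, with $\partial u/\partial f_j=1$:
\[
\partial_{f_\pm}G_{\zeroVelocitySymbol}=\tfrac{1}{\es}(1-2\equilibriumCoefficientLinear),
\]
which is non-negative by \eqref{L2cond}. Next I write $\alpha=\tfrac12(\tfrac1\es+\tfrac1\ea)$, $\beta=\tfrac12(\tfrac1\es-\tfrac1\ea)$ and $d_\pm=\equilibriumCoefficientLinear\pm\flux'(u)/(2\latticeVelocity)$. Then
\[
\partial_{f_{\zeroVelocitySymbol}}G_{\pm}=\alpha d_\pm+\beta d_\mp=\frac{\equilibriumCoefficientLinear}{\es}\pm\frac{\flux'(u)}{2\latticeVelocity\ea},
\qquad
\partial_{f_\mp}G_\pm=\alpha d_\pm+\beta(d_\mp-1)=\frac{\equilibriumCoefficientLinear}{\es}\pm\frac{\flux'(u)}{2\latticeVelocity\ea}-\beta .
\]

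The first family is non-negative iff $|\flux'(u)|/\latticeVelocity\le 2\ea\equilibriumCoefficientLinear/\es$. For the second, I multiply by $2\ea>0$ to get
\[
\frac{2\ea\equilibriumCoefficientLinear}{\es}\pm\frac{\flux'(u)}{\latticeVelocity}-\frac{\ea}{\es}+1\ge 0,
\]
that is, $|\flux'(u)|/\latticeVelocity\le 1-\tfrac{\ea}{\es}(1-2\equilibriumCoefficientLinear)$. Both are exactly the two entries of the minimum in \eqref{cond1}, so the case $\equilibriumCoefficientLinear\ne\tfrac12$ follows.

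When $\equilibriumCoefficientLinear=\tfrac12$, the derivative $\partial_{f_\pm}G_{\zeroVelocitySymbol}$ vanishes. Both conditions reduce to $|\flux'(u)|/\latticeVelocity\le\ea/\es$ and $|\flux'(u)|/\latticeVelocity\le1$, so \eqref{subchar} does not directly cover the first one. My fix is to use that with equilibrium initial data $f_{\zeroVelocitySymbol}\equiv0$, as in the \lbmScheme{1}{2} remark. The relevant box then has $\underline{m}_{\zeroVelocitySymbol}=\overline{m}_{\zeroVelocitySymbol}=0$, so the dependence on $f_{\zeroVelocitySymbol}$ is vacuous: $Q$ is degenerate in that direction and \Cref{def:quasiMon} only constrains $G_\pm$ in $f_\mp$. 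Equivalently, one can drop the $f_{\zeroVelocitySymbol}$ equation. The only remaining requirement, non-negativity of $\partial_{f_\mp}G_\pm$, is exactly \eqref{subchar}, matching the paper's own derivation.

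This degenerate case is the main subtlety; the rest is routine algebra.
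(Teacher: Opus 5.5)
Your proof is correct and follows the paper's own argument: you compute the off-diagonal derivatives of the relaxation term, note that they depend on $\vectorial{f}$ only through $u\in[-\maximumInitialDatum,\maximumInitialDatum]$ on the box, and read off $1-2\equilibriumCoefficientLinear\geq 0$ together with the two entries of the minimum in \eqref{cond1}. For $\equilibriumCoefficientLinear=\tfrac12$, your degenerate-box argument ($\underline{m}_{\zeroVelocitySymbol}=\overline{m}_{\zeroVelocitySymbol}=0$) is the paper's reduction to ``the two equations left'', and you are right that without it \eqref{subchar} alone would not control $\partial_{f_{\zeroVelocitySymbol}}G_\pm$.
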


\section{Qualitative features}\label{sec:qualitativeFeatures}

We now focus on understanding qualitative features of the solutions to \eqref{system1}, in particular concerning the impact of $\ea$, $\es$, and $\equilibriumCoefficientLinear $.
Of particular interest is the possibly different status of $\ea$ compared to that of $\es$.

A first technique of analysis is based on formal \strong{Chapman-Enskog expansions} in the limit of small $\ea$ and $\es$.
A second approach aims at reducing the ``dimensionality'' of the problem by passing from a system of PDEs to one made up of ODEs.
This is done by investigating (at least numerically) solutions under the form of \strong{travelling waves}.
We shall see that these two standpoints are complementary.

In this section, we drop---for the sake of convenience---the superscript $\vectorial{\varepsilon}$ that so far indicated the dependence of the solution of the relaxation scheme on the choice of $(\ea, \es)$.

\subsection{Chapman-Enskog expansion}

The formulation \eqref{eq:systemMoments} is particularly suitable to perform a Chapman-Enskog expansion in the limit of small $\ea$ and $\ea$.
The last two equations entail
\[
  \left\{
    \begin{aligned}
      &v=\flux(u)-\ea(\partial_t v + \lambda^2 \partial_x
      w),\\
      &w=2\equilibriumCoefficientLinear u-\es (\partial_t w +\partial_x v).
    \end{aligned}
  \right.
\]
Hence, the first equation at leading order reads $v=\flux(u)+O(\ea)$.
Taking its time derivative and keeping only the leading order: $\partial_t v =\partial_t \flux(u) +O(\ea) =\flux^\prime(u)\partial_t u +O(\ea) =-\flux^\prime(u)\partial_x v +O(\ea) =-(\flux^\prime(u))^2\partial_x u +O(\ea)$.
Neglecting $O(\ea^2)$ and $O(\ea \es)$ terms:
\[
  v=\flux(u)+\ea \left( (\flux^\prime(u))^2-2\lambda^2
    \equilibriumCoefficientLinear \right)\partial_x u +O(\ea^2) +O(\ea\es),
\]
so that the ``equivalent equation''  obtained from \eqref{eq:systemMoments} is:
\begin{equation}\label{CE}
  \partial_t u + \partial_x \flux(u)=\ea \partial_x \left( \left(2\lambda^2
    \equilibriumCoefficientLinear - (\flux^\prime(u))^2 \right)
  \partial_x u\right) +O(\ea^2) +O(\ea\es),
\end{equation}
where the neglected terms are quadratic in the $\epsilon$'s.
Remark that the diffusion term depends on $\ea$ only, coherently with the lattice Boltzmann setting \cite{aregba2026monotonicity}.
This equation is dissipative if
\begin{equation}\label{CEdissip}
  \equilibriumCoefficientLinear  \geq 0\qquad {\rm and} \qquad 
  \lambda \sqrt{2 \equilibriumCoefficientLinear }\geq
  |\flux^\prime(u)|,
\end{equation}
where $u$ must be thought as belonging to $[-\maximumInitialDatum, \maximumInitialDatum]$.
Note that in the \lbmScheme{1}{2} case
$\equilibriumCoefficientLinear =\frac{1}{2}$, this condition reduces
to the subcharacteristic one \eqref{subchar}.

Back to the general setting, at leading order, only $\ea$ impacts the ``viscous regularisation'' induced by the relaxation process.
The second-order moment $w$ influences this process through its equilibrium, that is $\equilibriumCoefficientLinear $.
The larger this coefficients, the larger the dissipation.

\begin{lemma}[Sufficient quasi-monotonicity conditions imply dissipative Chapman-Enskog equation]\label{lemma:consQuasiMonCR}
  If the assumptions of \Cref{mono1} are fulfilled, then \eqref{CEdissip} is fulfilled as well.
  Moreover, the second inequality in \eqref{CEdissip} is strict if $\equilibriumCoefficientLinear \neq \tfrac{1}{2}$.
\end{lemma}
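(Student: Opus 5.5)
We argue case by case according to the value of $\equilibriumCoefficientLinear$, setting $\sigma \definitionEquality \max_{u\in[-\maximumInitialDatum,\maximumInitialDatum]}|\flux^\prime(u)|/\latticeVelocity$. The first inequality in \eqref{CEdissip} is immediate from \eqref{L2cond}, since $\equilibriumCoefficientLinear>0$. For the second inequality we must show $\sigma\leq\sqrt{2\equilibriumCoefficientLinear}$, with strict inequality when $\equilibriumCoefficientLinear\neq\tfrac12$.

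When $\equilibriumCoefficientLinear=\tfrac12$ we have $\sqrt{2\equilibriumCoefficientLinear}=1$. The required bound is then exactly \eqref{subchar}, as already remarked after \eqref{CEdissip}.

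When $\equilibriumCoefficientLinear\in(0,\tfrac12)$, set $r\definitionEquality\ea/\es>0$ and $a\definitionEquality 2\equilibriumCoefficientLinear\in(0,1)$. Condition \eqref{cond1} reads $\sigma\leq\min(ra,\,1-r(1-a))$.

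The key step is to bound this minimum uniformly in $r$. The map $r\mapsto ra$ is increasing and $r\mapsto 1-r(1-a)$ is decreasing, so the minimum is largest where the two lines cross. They cross at $r^\ast=1$, where both equal $a$. Hence $\sigma\leq\min(ra,1-r(1-a))\leq a$ for every $r>0$. This recovers the dichotomy $r\leq1$ / $r\geq1$ already discussed before \Cref{mono1}.

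To conclude, since $a\in(0,1)$ we have $a<\sqrt a$, so $\sigma\leq a<\sqrt{a}=\sqrt{2\equilibriumCoefficientLinear}$. This is the second inequality in \eqref{CEdissip}, and it is strict, for all $u\in[-\maximumInitialDatum,\maximumInitialDatum]$.

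There is no real obstacle here. The only point needing care is noticing that the quasi-monotonicity bound $2\equilibriumCoefficientLinear$ is linear in $\equilibriumCoefficientLinear$ while the dissipation bound $\sqrt{2\equilibriumCoefficientLinear}$ is a square root. This gap is exactly what yields strictness for $\equilibriumCoefficientLinear<\tfrac12$, and the two bounds coincide at $\equilibriumCoefficientLinear=\tfrac12$.
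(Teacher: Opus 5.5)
Your proof is correct and is essentially the paper's argument: the paper also splits into $\ea/\es\leq 1$ and $\ea/\es\geq 1$ to obtain $|\flux'(u)|/\latticeVelocity\leq 2\equilibriumCoefficientLinear\leq\sqrt{2\equilibriumCoefficientLinear}$, with strictness coming from $2\equilibriumCoefficientLinear\in(0,1)$. Your crossing-point observation at $r=1$ packages those two cases into one step. You also treat the case $\equilibriumCoefficientLinear=\tfrac12$ via \eqref{subchar} explicitly, which the paper leaves implicit.
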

\begin{proof}
  Condition \eqref{L2cond} implies that the first inequality in \eqref{CEdissip} is valid.
  Furthermore, it also gives that $2\equilibriumCoefficientLinear  \in
  (0, 1]$, and if $\equilibriumCoefficientLinear \neq \tfrac{1}{2}$,
  the interval is also open to the right.  Let $\ea/\es\leq 1$, then \eqref{cond1} gives 
  \begin{equation*}
      \frac{|\flux^\prime(u)|}{\lambda}\leq \frac{2  \ea \equilibriumCoefficientLinear }{\es}
      \leq {  2\equilibriumCoefficientLinear }
      \leq 
      \sqrt{2\equilibriumCoefficientLinear} .
  \end{equation*}
  Let $\ea/\es\geq 1$, then \eqref{cond1} yields
  \begin{equation*}
      \frac{|\flux^\prime(u)|}{\lambda}\leq 1-\frac{\ea}{\es}(1-2\equilibriumCoefficientLinear )
      \leq 2\equilibriumCoefficientLinear \leq \sqrt{ 2\equilibriumCoefficientLinear   }.
  \end{equation*}
\end{proof}
 
\subsection{Travelling wave solutions}

In this section, the relaxation parameters $\ea$ and $\ea$ are given and strictly positive.
We now consider ${u}_- \neq {u}_+$, and look to a profile---moving without deformation at some velocity $c$---connecting the first state at $-\infty$ to the second one at $+\infty$. 
More precisely, we look for travelling wave solutions of \eqref{eq:systemMoments} of the form
\begin{equation*}
    u(t, x) = {u}(x-ct), 
    \qquad 
    v(t, x) = {v}(x-ct), 
    \qquad 
    w(t, x) = {w}(x-ct),
\end{equation*}
such that equilibrium is reached at $\pm\infty$:
\begin{equation}\label{eq:atInfinity}
    \lim_{\xi\to\pm\infty} {u}(\xi) = {u}_{\pm}, 
    \qquad 
    \lim_{\xi\to\pm\infty} {v}(\xi) = \flux({u}_{\pm}), 
    \qquad 
    \lim_{\xi\to\pm\infty} {w}(\xi) = 2\equilibriumCoefficientLinear  {u}_{\pm}.
\end{equation}
Let $\xi = x-ct$. Into \eqref{eq:systemMoments}, this yields:
\begin{equation}\label{eq:systemWaves3}
    \begin{cases}
        -c{u}'(\xi) + {v}'(\xi) = 0, \\
        -c{v}'(\xi) + \lambda^2 {w}'(\xi) = \frac{1}{\ea}\left(
    \flux({u}(\xi))-{v}(\xi)\right), \\
    -c{w}'(\xi) + {v}'(\xi) = \frac{1}{\es}\left(2\equilibriumCoefficientLinear {u}(\xi)-{w}(\xi)\right).
    \end{cases}
\end{equation}
Integrating the first equation on $\reals$ gives $c({u}_+-{u}_-)=\flux({u}_+)-\flux({u}_-)$, which is the Rankine-Hugoniot condition. Hence $c$ is necessarily the velocity of the considered (regularized) shock wave. 

\subsubsection{Previous results in the BGK setting and few explicit solutions}

We now discuss two particular cases in which we are able to explicitly solve \eqref{eq:systemWaves3}. 
This provides a nice connection to existing results in the BGK case and with the previous Chapman-Enskog expansion.
\begin{example}[\lbmScheme{1}{2} scheme]\label{ex:D1Q2}
    Consider $\equilibriumCoefficientLinear  = \tfrac{1}{2}$.
    Inserting the first equation of \eqref{eq:systemWaves3} into the last one, we obtain 
    \begin{equation*}
        c({u}(\xi)-{w}(\xi))'
        =
        \frac{1}{\es} ({u}(\xi)-{w}(\xi)).
    \end{equation*}
    If $c=0$, we obtain that ${u}\equiv {w}$.
    When $c\neq 0$, we deduce that ${u}(\xi)-{w}(\xi) = C e^{\xi/(c\es)}$: imposing equilibrium at $\pm\infty$, we obtain $C=0$, thus that ${u}\equiv {w}$.
    With this, the first two equations out of \eqref{eq:systemWaves3} read
    \begin{equation}\label{eq:tmp1}
    \begin{cases}
        -c{u}'(\xi) + {v}'(\xi) = 0, \\
        -c{v}'(\xi) + \lambda^2 {u}'(\xi)  = \frac{1}{\ea}\left(
    \flux({u}(\xi))-{v}(\xi)\right). 
    \end{cases}
\end{equation}
This is nothing but the system considered by \cite{mascia1996l1nonlinear} in the proof of their Theorem 2.2, which gives sufficient conditions for the existence of the travelling wave above. 
Their condition is that ${u}_- > {u}_+$, the Oleinik condition 
\begin{equation*}
    \flux(s)-\flux({u}_+) - \frac{\flux({u}_+)-\flux({u}_-)}{{u}_+-{u}_-}
    (s-{u}_+)<0,
    \qquad \forall s\in ({u}_+, {u}_-),
\end{equation*}
and that $\latticeVelocity>|c|$.
Equation \eqref{eq:tmp1} can be recast a sole equation on ${u}$.
In the Burgers case $\flux(u) = u^2/2$, we have $c = ({u}_- + {u}_+)/2$, the equation reads 
\begin{equation*}
    {u}'(\xi) = \frac{1}{2\ea(\latticeVelocity^2-c^2)} ({u}(\xi)-{u}_-)({u}(\xi)-{u}_+),
\end{equation*}
which can be solved to yield (selecting it such that ${u}(0) = ({u}_- + {u}_+)/2$):
\begin{equation}\label{eq:hypTangent}
    {u}(\xi)
    =
    \frac{{u}_- + {u}_+}{2}
    +
    \frac{{u}_- - {u}_+}{2}
    \textnormal{tanh}
    \Bigl ( 
    -\frac{{u}_- - {u}_+}{4\ea(\latticeVelocity^2-c^2)}\xi
    \Bigr ).
\end{equation}
Again, this emphasizes that $\es$ plays no role in this case.
Moreover, the solution in \eqref{eq:hypTangent} enjoys the following symmetry property: ${u}(-\xi)-{u}(0)
    =
    {u}(0) - {u}(\xi)$.
\end{example}
\begin{example}[Burgers stationary shock]
Consider the Burgers flux $\flux(u) = u^2/2$, and ${u}_{\mp}=\pm U$ with $U>0$, so that by the Rankine-Hugoniot law,  $c=0$.
 We obtain
  \begin{equation*}
    \begin{cases}
         {v}'(\xi) = 0, \\
       \lambda^2 {w}'(\xi) = \frac{1}{\ea}\left(
    \flux({u}(\xi))-{v}(\xi)\right), \\
    {v}'(\xi) = \frac{1}{\es}\left(2\equilibriumCoefficientLinear {u}(\xi)-{w}(\xi)\right),
    \end{cases}
  \end{equation*}
so
${w}(\xi) = 2\equilibriumCoefficientLinear {u}(\xi)$ and $v=\flux(U)=\flux(-U)=\ud U^2$.
The equation on ${u}$ becomes
\[
  2\lambda^2\equilibriumCoefficientLinear {u}^\prime=\frac{1}{2\ea}({u}^2-U^2)
\]
which can be  explicitly solved (up to translations) to yield the hyperbolic tangent profile
\[
  {u}(\xi)
  = 
  U\frac{1-\textnormal{exp}({\frac{U}{2\ea \latticeVelocity^2\equilibriumCoefficientLinear } \xi})}{1+\textnormal{exp}({\frac{U}{2\ea \latticeVelocity^2\equilibriumCoefficientLinear } \xi})}
  =
  U\,\text{tanh} \Bigl (-\frac{U}{4\ea \latticeVelocity^2\equilibriumCoefficientLinear } \xi \Bigr ) .
\]
This shows that, coherently with the Chapman-Enskog expansion and interpreting numerical diffusion in the relaxation approximation as a term regularizing the shock, the thickness of the regularized profile depends---in this particular setting---only on $\ea$, and not on $\es$.
\end{example}

\subsubsection{Existence of weak shocks}

We adapt Theorem 2.2 by \cite{zumbrun2000existence} on the existence of weak shocks to our scalar setting.
Since we only have \strong{sufficient conditions}, we decided to give them in terms of the previously introduced (sufficient) conditions for quasi-monotonicity.
\begin{theorem}[Consequence of Theorem 2.2 by \cite{zumbrun2000existence}]
Assume that $\equilibriumCoefficientLinear \in (0, \frac{1}{2})$ (the case $\equilibriumCoefficientLinear = \frac{1}{2}$ is thoroughly discussed in \Cref{ex:D1Q2}).
 Let the condition
    \begin{equation*}
    \frac{\flux({u}_+)-\flux({u}_-)}{{u}_+-{u}_-}
    <
    \frac{\flux({u})-\flux({u}_-)}{{u}-{u}_-}
    \qquad 
    \forall
    {u}
    \in (\min({u}_-, {u}_+), \max({u}_-, {u}_+))
\end{equation*}
hold, and that
    \begin{equation*}
        \flux'({u}_-)\neq 0.
    \end{equation*}
Finally, assume that \eqref{cond1} holds with $\maximumInitialDatum = \max(|{u}_-|, |{u}_+|)$.
    Then there exist $\delta_1, \delta_2>0$ such that for $|{u}_-
    -{u}_+|<\delta_1$, a unique (up to a translation), smooth solution of \eqref{eq:systemWaves3} fulfilling \eqref{eq:atInfinity}, such that $|{u}(\xi)-{u}_-|<\delta_2$, $|{v}(\xi)-\flux({u}_-)|<\delta_2$, and $| {w}(\xi)-2\equilibriumCoefficientLinear  {u}_-|<\delta_2$ exists.
\end{theorem}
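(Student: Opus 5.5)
The plan is to check, one by one, the hypotheses of Theorem~2.2 in \cite{zumbrun2000existence} for the relaxation system \eqref{eq:systemMoments}. That result concerns systems of the form $\partial_t U + \partial_x F(U) = Q(U)$ which carry a conservative part and a relaxing part. Under suitable structural hypotheses on the equilibrium manifold, on the linearization of $Q$, on the reduced Chapman--Enskog viscous system, and on nondegeneracy of the shock speed, it gives existence and local uniqueness of small-amplitude relaxation profiles. In our case the state is $U=(u,v,w)$, the flux is $F(U)=(v,\lambda^2 w, v)$ and the source is $Q(U) = (0, (\flux(u)-v)/\ea, (2\equilibriumCoefficientLinear u - w)/\es)$.

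\textbf{Step 1: structure of the source.} The conserved quantity is $u$. The equilibrium manifold $\{v=\flux(u),\, w=2\equilibriumCoefficientLinear u\}$ is a smooth graph over $u$. At equilibrium, the Jacobian of the non-conservative block of $Q$ with respect to $(v,w)$ is $\mathrm{diag}(-1/\ea,-1/\es)$, which is strictly stable. The reduced equilibrium equation is exactly \eqref{eq:conservationLaw}.

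\textbf{Step 2: dissipativity of the reduced viscous equation.} The Chapman--Enskog expansion gives \eqref{CE}, whose viscosity is $\ea(2\lambda^2\equilibriumCoefficientLinear - \flux'(u)^2)$. By \Cref{lemma:consQuasiMonCR}, condition \eqref{cond1} with $\equilibriumCoefficientLinear\neq\tfrac12$ makes this viscosity strictly positive on $[-\maximumInitialDatum,\maximumInitialDatum]$. This is precisely why the strict inequality in that lemma matters: it supplies the uniform parabolicity (a Kawashima-type condition) that Zumbrun's theorem requires near $u_-$. I will also verify the non-characteristic/subcharacteristic requirement, namely that the shock speed $c\approx \flux'(u_-)$ differs from the characteristic speeds of the full hyperbolic part. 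Those speeds are the eigenvalues of $\partial F$, which are $0$ and $\pm\lambda\sqrt{1}$ (computed from the block $\begin{pmatrix}0&1&0\\0&0&\lambda^2\\0&1&0\end{pmatrix}$, giving $\mu^3=\lambda^2\mu$). Since \eqref{cond1} forces $|\flux'|<\lambda$, the condition $c\neq\pm\lambda$ holds. The condition $c\neq 0$ is exactly the hypothesis $\flux'(u_-)\neq 0$ (for weak shocks, $c\to\flux'(u_-)$). With $c\neq 0$ the profile ODE \eqref{eq:systemWaves3} is nonsingular and can be written as a first-order system in $(u,v,w)$ after integrating the first equation.

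\textbf{Step 3: Lax/genuine nonlinearity-type condition and conclusion.} The chord condition in the statement plays the role of the scalar Oleinik/Lax entropy condition. In Zumbrun's framework it guarantees that the reduced scalar profile equation $\ea(2\lambda^2\equilibriumCoefficientLinear - \flux'^2)u' = \flux(u)-\flux(u_-)-c(u-u_-)$ has a heteroclinic connection, and that this connection persists for the full system by a center-manifold reduction near $(u_-,\flux(u_-),2\equilibriumCoefficientLinear u_-)$. Once all hypotheses are checked, the theorem yields $\delta_1,\delta_2$, smoothness, and uniqueness up to translation. I expect the main obstacle to be Step~2: translating the abstract hypotheses of \cite{zumbrun2000existence}, in particular the noncharacteristic condition stated in their variables and the sign of the effective viscosity, into our setting. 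The hardest part is checking that the sign of $c$ relative to the relaxation rates does not obstruct the center-manifold reduction. I would handle it by computing the linearization of \eqref{eq:systemWaves3} at $u_-$ explicitly and showing it has exactly one center direction and hyperbolic remaining eigenvalues.
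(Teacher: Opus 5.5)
Your proposal takes the same route as the paper's sketch. You invoke Theorem~2.2 of \cite{zumbrun2000existence} as a black box, using that \eqref{cond1} with $\equilibriumCoefficientLinear<\tfrac12$ forces $|\flux'|\leq 2\equilibriumCoefficientLinear\latticeVelocity<\latticeVelocity$, so the noncharacteristic requirement $\flux'(u_-)\notin\{0,\pm\latticeVelocity\}$ reduces to the assumed $\flux'(u_-)\neq 0$ (with $c\neq 0$ following by continuity for weak shocks, not ``exactly''), and that \Cref{lemma:consQuasiMonCR} gives the strict positivity of $2\latticeVelocity^2\equilibriumCoefficientLinear-\flux'(u)^2$ needed for their dissipativity conditions. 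The extra linearization check you announce is unnecessary once the theorem is invoked, and it poses no obstruction: at $c=\flux'(u_-)$ the reduced $(u,w)$ system has one zero eigenvalue and one eigenvalue $(2\latticeVelocity^2\equilibriumCoefficientLinear-c^2)/(\es c(\latticeVelocity^2-c^2))\neq 0$.
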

\begin{proof}[Sketch of the proof]
  Note that assumption (c) in \cite{zumbrun2000existence} requests $|\flux'({u}_-)|\neq \latticeVelocity$, however the quasi-monotonicity assumption, see \Cref{mono1}, prevents from having $|\flux'({u}_-)| =  \latticeVelocity$.
  On the other hand, assumption (c) also requests $\flux'({u}_-)\neq 0$, which we must list in the hypotheses.
  Then, notice that (2.4) and (2.5) in \cite{zumbrun2000existence} hold true since, considering that $\equilibriumCoefficientLinear \neq\tfrac{1}{2}$, our sufficient quasi-monotonicity condition implies that the second inequality in \eqref{CEdissip} is strict, by virtue of \Cref{lemma:consQuasiMonCR}.
  All other assumptions are trivially fulfilled.
\end{proof}

\subsubsection{Equilibria classification and existence of (some) shocks for the Burgers' equation}

Coming back to the general setting, integrating the first equation in \eqref{eq:systemWaves3}, we obtain ${v}(\xi) = c{u}(\xi) + C_v$ and hence
\begin{equation}\label{eq:systemODEuandV}
    \begin{cases}
        -c^2{u}'(\xi) + \lambda^2 {w}'(\xi) = \frac{1}{\ea}\left(
    \flux({u}(\xi))-c{u}(\xi) - C_v\right), \\
    c{u}'(\xi) -c{w}'(\xi)  = \frac{1}{\es}\left(2\equilibriumCoefficientLinear {u}(\xi)-{w}(\xi)\right).
    \end{cases}
\end{equation}
where $C_v = \flux({u}_{\pm}) - c{u}_{\pm} \in\reals$ is a constant.
We now convert the problem to a second-order (if the shock is unsteady: $c\neq 0$) equation on ${u}$ only.
We shall eventually come back to the problem also with $v$ and $w$.
The procedure is based on \cite{bellotti2023truncation}, strongly reminiscent of the computation by \cite{rauch1972l2}---detailed in Appendix \ref{app:equivalentSecondU}.
If we assume (for the moment), that a couple of smooth functions $(u(\xi), w(\xi))$ satisfies \eqref{eq:systemODEuandV}, then $u(\xi)$ alone also fulfills
\begin{equation}\label{eq:secondOrderU}
    c(c^2 - \lambda^2) {u}''(\xi)
    + \Bigl( -\frac{c^2}{\es} - \frac{c^2}{\ea} + \frac{2\lambda^2\equilibriumCoefficientLinear }{\es} + \frac{c}{\ea}\flux'({u}(\xi))\Bigr ){u}'(\xi)
    +\frac{c}{\ea\es}{u}(\xi)
    -\frac{1}{\ea\es}(\flux({u}(\xi))-C_v) = 0.
\end{equation}
Although we were unable to explicitly solve this equation in full generality, even for the Burgers' equation, note that it looks similar to the one of a \strong{damped pendulum}, where the mass can be negative according to the sign of $c$, damping is non-linear, and the restoring force term is non-linear as well.
If we introduce ${z} = {u}'$, this rewrites as 
\begin{equation}\label{eq:shooting}
    \begin{cases}
        {u}'(\xi) = {z}(\xi), \\
         {z}'(\xi) =
    \frac{1}{c(c^2 - \lambda^2)}\Bigl( \frac{c^2}{\es} + \frac{c^2}{\ea} - \frac{2\lambda^2\equilibriumCoefficientLinear }{\es} -\frac{c}{\ea}\flux'({u}(\xi)) \Bigr ){z}(\xi)
    -\frac{1}{\ea\es (c^2 - \lambda^2)}{u}(\xi)
    +\frac{1}{\ea\es c(c^2 - \lambda^2)}(\flux({u}(\xi))-C_v).
    \end{cases}
\end{equation}
Let us now consider the Burgers' flux: from \eqref{eq:secondOrderU}, we obtain the equation 
\begin{equation}\label{eq:BurgersSecondOrderTravellingEq}
     {u}''(\xi)
    + ( \alpha + \beta {u}(\xi) ){u}'(\xi)
    +\gamma\Bigl (-\frac{{u}(\xi)^2}{2 c} + u(\xi)+\frac{C_v}{c} \Bigr ) = 0,
\end{equation}
where 
\begin{equation*}
  \alpha = 
  \frac{1}{c^2 - \lambda^2}
\Bigl ( \frac{2\lambda^2\equilibriumCoefficientLinear }{\es c}   -\frac{c}{\es} - \frac{c}{\ea}\Bigr ),
\qquad 
\beta = 
 \frac{1}{(c^2 - \lambda^2)\ea},
 \qquad 
 \text{and}
 \qquad 
 \gamma =
\frac{1}{(c^2 - \lambda^2)\ea\es}.
\end{equation*}
The system \eqref{eq:shooting} reads 
\begin{equation}\label{es:systemBurgers}
  \begin{cases}
    u'(\xi) = z(\xi),\\
    z'(\xi) = -( \alpha + \beta {u}(\xi) )z(\xi)
    -\gamma\Bigl (-\frac{{u}(\xi)^2}{2 c} + u(\xi)+\frac{C_v}{c} \Bigr ),
  \end{cases}
\end{equation}
hence the Jacobian of its right-hand side:
\begin{equation*}
  \mathscr{J}(u, z)
  =
  \begin{pmatrix}
    0 & 1\\
    -\beta z+\gamma(\frac{u}{c}-1) & -(\alpha + \beta u)
  \end{pmatrix},
  \qquad \text{so}\qquad 
  \mathscr{J}(u, 0)
  =
  \begin{pmatrix}
    0 & 1\\
    \gamma(\frac{u}{c}-1) & -(\alpha + \beta u)
  \end{pmatrix}.
\end{equation*}
Note that the only two equilibria of \eqref{es:systemBurgers} are $(u_{\pm}, 0)$, and we are now looking for a \strong{heteroclinic orbit} connecting them.
We now classify the equilibria by linearization, see Appendix \ref{app:lemma:classificationEquilibriaBurgers} for the proof.
\begin{lemma}[Classification of equilibria by linearization in the Burgers' case]\label{lemma:classificationEquilibriaBurgers}
  Consider the Burgers flux $\flux(u)=u^2/2$.
  Assume that $u_- > u_+$ and $u_+>-u_-$ (thus $c>0$).
  Moreover, select $\latticeVelocity>0$ such that $\latticeVelocity>c$.
  Then 
  \begin{itemize}
    \item The equilibrium $(u_+, 0)$ of \eqref{es:systemBurgers} is a \strong{saddle point}. Hence, the eigenvalues of the Jacobian are real with opposite sign, and thus the system does not oscillate in the vicinity of this equilibrium.
    \item The equilibrium $(u_-, 0)$ of \eqref{es:systemBurgers}, 
    \begin{itemize}
      \item If $\equilibriumCoefficientLinear  <
    \frac{c}{2\lambda^2 }
    \Bigl ( 
    \frac{u_- + u_+}{2} -\frac{u_- - u_+}{2}\frac{\es}{\ea}
    \Bigr )$, is a \strong{sink}, hence the heteroclinic orbit does not exist.
      \item If $\equilibriumCoefficientLinear  =
    \frac{c}{2\lambda^2 }
    \Bigl ( 
    \frac{u_- + u_+}{2} -\frac{u_- - u_+}{2}\frac{\es}{\ea}
    \Bigr )$, is \strong{not hyperbolic}, and we cannot conclude.
    \item If $\equilibriumCoefficientLinear  >
    \frac{c}{2\lambda^2 }
    \Bigl ( 
    \frac{u_- + u_+}{2} -\frac{u_- - u_+}{2}\frac{\es}{\ea}
    \Bigr )$, is a \strong{source}, and the heteroclinic orbit can exist.
      More precisely.
      \begin{itemize}
        \item If $\equilibriumCoefficientLinear 
 < 
 \frac{c}{2\lambda^2}
 \Bigl ( \frac{u_- + u_+}{2} - \frac{u_- - u_+}{2}\frac{\es}{\ea} + 2 \sqrt{(\lambda^2-c^2)
  \frac{\es}{\ea}\frac{u_- - u_+}{u_-+u_+}
}\Bigr )$, we have an \strong{unstable focus}, and the system can oscillate in the vicinity of the equilibrium.
\item If $\equilibriumCoefficientLinear 
 = 
 \frac{c}{2\lambda^2}
 \Bigl ( \frac{u_- + u_+}{2} - \frac{u_- - u_+}{2}\frac{\es}{\ea} + 2 \sqrt{(\lambda^2-c^2)
  \frac{\es}{\ea}\frac{u_- - u_+}{u_-+u_+}
}\Bigr )$, we have an \strong{unstable degenerate node}.
\item If $\equilibriumCoefficientLinear 
 > 
 \frac{c}{2\lambda^2}
 \Bigl ( \frac{u_- + u_+}{2} - \frac{u_- - u_+}{2}\frac{\es}{\ea} + 2 \sqrt{(\lambda^2-c^2)
  \frac{\es}{\ea}\frac{u_- - u_+}{u_-+u_+}
}\Bigr )$, we have an \strong{unstable node}.
      \end{itemize}
    \end{itemize}
  \end{itemize}
\end{lemma}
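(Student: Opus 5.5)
The plan is to linearize \eqref{es:systemBurgers} at its two equilibria $(u_\pm,0)$ and read off their type from the trace and determinant of $\mathscr{J}(u_\pm,0)$. For a planar system, the trace $T_\pm=-(\alpha+\beta u_\pm)$ and the determinant $D_\pm=-\gamma(\tfrac{u_\pm}{c}-1)$ classify hyperbolic equilibria:
\begin{itemize}
\item $D<0$ gives a saddle;
\item $D>0$ and $T<0$ give a sink, while $D>0$ and $T>0$ give a source;
\item for a source, $T^2<4D$ gives a focus, $T^2=4D$ a degenerate node, and $T^2>4D$ a node;
\item $T=0$ with $D>0$ is non-hyperbolic.
\end{itemize}

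First I fix the signs of the constants. By Rankine--Hugoniot for Burgers, $c=(u_-+u_+)/2$, which is positive by the hypothesis $u_+>-u_-$. Since $\latticeVelocity>c>0$, we have $c^2-\latticeVelocity^2<0$, so $\beta<0$ and $\gamma<0$. Using $u_+-c=-(u_--u_+)/2<0$ and $u_--c=(u_--u_+)/2>0$, I get
\begin{equation*}
D_+=\frac{u_+-u_-}{2c(\latticeVelocity^2-c^2)\ea\es}<0,
\qquad
D_-=\frac{u_--u_+}{2c(\latticeVelocity^2-c^2)\ea\es}>0 .
\end{equation*}
This gives the saddle at $(u_+,0)$ at once. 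Real eigenvalues of opposite sign follow from $D_+<0$, which also excludes oscillation near that point.

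At $(u_-,0)$ the determinant is positive, so the sign of $T_-$ decides. Multiplying $\alpha+\beta u_-$ by the negative factor $c^2-\latticeVelocity^2$ shows that
\begin{equation*}
T_->0 \iff \frac{2\latticeVelocity^2\equilibriumCoefficientLinear}{\es c}-\frac{c}{\es}-\frac{c}{\ea}+\frac{u_-}{\ea}>0 .
\end{equation*}
Multiplying by $\es c/(2\latticeVelocity^2)$ and using $u_--c=(u_--u_+)/2$, this becomes $\equilibriumCoefficientLinear>\frac{c}{2\latticeVelocity^2}\bigl(c-\frac{u_--u_+}{2}\frac{\es}{\ea}\bigr)$. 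Since $c=\frac{u_-+u_+}{2}$, this is exactly the stated threshold, and it yields the sink, non-hyperbolic and source trichotomy. For the heteroclinic claim, the orbit must leave $(u_-,0)$ as $\xi\to-\infty$, so it needs a nontrivial unstable manifold there. A sink has none, so no connection exists. A source has a two-dimensional unstable manifold, so a connection to the stable branch of the saddle $(u_+,0)$ is not excluded. Only ``can exist'' is claimed, so nothing more is needed.

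The remaining step, which I expect to be the only algebraically delicate one, is the focus/node split in the source case. For $T_->0$, the condition $T_-^2\lessgtr 4D_-$ is equivalent to $T_-\lessgtr 2\sqrt{D_-}$. I write
\begin{equation*}
T_-=\frac{1}{\latticeVelocity^2-c^2}\Bigl(\frac{2\latticeVelocity^2\equilibriumCoefficientLinear}{\es c}-\frac{c}{\es}+\frac{u_--u_+}{2\ea}\Bigr),
\end{equation*}
then solve $T_-=2\sqrt{D_-}$ for $\equilibriumCoefficientLinear$. This means multiplying by $(\latticeVelocity^2-c^2)\es c/(2\latticeVelocity^2)$ and simplifying $(\latticeVelocity^2-c^2)\sqrt{D_-}$ using $2c=u_-+u_+$. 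The result should be the square-root term $2\sqrt{(\latticeVelocity^2-c^2)\frac{\es}{\ea}\frac{u_--u_+}{u_-+u_+}}$, possibly up to a factor that the bookkeeping will settle. Because $T_-$ is increasing in $\equilibriumCoefficientLinear$, the three sub-cases follow by monotonicity: below the threshold is the focus (complex eigenvalues with positive real part, hence oscillations), at the threshold the degenerate node, and above it the unstable node. I would carry out this last simplification carefully, as it is where constant factors are easiest to lose.
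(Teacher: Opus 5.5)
Your proposal is correct and follows the paper's own argument. You use the sign of $\gamma$, then $\det\mathscr{J}(u_\pm,0)=\frac{\gamma}{c}(c-u_\pm)$ for the saddle versus same-sign split, then the sign of $\textnormal{tr}\,\mathscr{J}(u_-,0)$ for sink versus source, and finally $\textnormal{tr}<2\sqrt{\det}$ for focus versus node. The bookkeeping you defer closes with no stray factor, since $\es(\latticeVelocity^2-c^2)\sqrt{D_-}=\sqrt{(\latticeVelocity^2-c^2)\frac{\es}{\ea}\frac{u_--u_+}{u_-+u_+}}$, which gives exactly the stated threshold.
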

In what follows, we shall perform numerical illustrations with $u_- = 1$ and $u_+ = 0$ for the Burgers' equation.
We thus give sufficient conditions for the existence of a travelling wave in this framework.
The proof is based on the dissipation of an energy, which ensures trapping of the trajectories.
\begin{proposition}[Sufficient conditions for the existence of a $u_-=1\to u_+ = 0$ travelling wave in the Burgers' case]\label{prop:SufficientExistenceTWBurgers}
  Consider the Burgers flux $\flux(u)=u^2/2$.
  Take  $u_- = 1$ and $u_+ = 0$ (thus $c=\tfrac{1}{2}$).
  Moreover, select $\latticeVelocity>0$ such that $\latticeVelocity>c = \tfrac{1}{2}$.
  If 
  \begin{equation}\label{eq:enoughFriction}
  \equilibriumCoefficientLinear
  >
  \frac{1}{8\latticeVelocity^2}
  \Bigl ( 1+\frac{\es}{\ea}\Bigr ),
\end{equation}
there exists a unique (up to translations) smooth solution of \eqref{eq:systemWaves3} fulfilling \eqref{eq:atInfinity}. 
Moreover, this solution satisfies the bounds
  \begin{equation}\label{eq:boundsTravellingWave}
    0\leq u(\xi)< \frac{3}{2}
    \qquad 
    \text{and}
    \qquad 
    |u'(\xi)|
    <
    \frac{1}{\sqrt{(\lambda ^2-c^2)\ea\es}}u(\xi)\sqrt{1-\frac{2}{3}u(\xi)}
    \leq 
    \frac{1}{\sqrt{3(c^2 - \lambda^2)\ea\es}}.
  \end{equation}
\end{proposition}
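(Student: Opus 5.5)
The approach is phase-plane analysis of the reduced second-order system, with an energy (Lyapunov) function whose sign is controlled by \eqref{eq:enoughFriction}. With $u_-=1$, $u_+=0$ and $c=\tfrac12$ we have $C_v=\flux(0)-c\cdot 0=0$, so \eqref{es:systemBurgers} reads
\begin{equation*}
u'=z,\qquad z'=-(\alpha+\beta u)z-\gamma\,(u-u^2),
\end{equation*}
with $\gamma=\frac{1}{(c^2-\lambda^2)\ea\es}<0$ since $\lambda>c$. Set $V(u)=|\gamma|\bigl(\tfrac{u^3}{3}-\tfrac{u^2}{2}\bigr)$ and $E(u,z)=\tfrac12 z^2+V(u)$. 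A direct computation gives
\begin{equation*}
\frac{\differential}{\differential\xi}E(u(\xi),z(\xi))=-(\alpha+\beta u)\,z^2 .
\end{equation*}
Writing $\alpha+\beta u=\frac{1}{c^2-\lambda^2}\bigl(\frac{4\lambda^2\equilibriumCoefficientLinear}{\es}-\frac{1}{2\es}-\frac{1}{2\ea}+\frac{u}{\ea}\bigr)$, the bracket is increasing in $u$. At $u=0$ it is positive exactly when \eqref{eq:enoughFriction} holds. Hence $\alpha+\beta u<0$ for all $u\ge 0$, so $E$ is non-decreasing forward in $\xi$ (strictly where $z\neq0$) as long as $u\ge0$. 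This is the precise sense in which \eqref{eq:enoughFriction} means ``enough friction''.

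\textbf{Step 1: construct the orbit and show it is trapped.} By \Cref{lemma:classificationEquilibriaBurgers}, $(0,0)$ is a saddle. Take the branch of its stable manifold lying in $\{u>0\}$ (so $z<0$ near the equilibrium) and follow it backward in $\xi$. Along it $E\to0$ as $\xi\to+\infty$ and $E$ strictly increases forward, so $E<0$ at every finite $\xi$.

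The set $\{E<0,\ u>0\}$ is exactly $\{0<u<\tfrac32,\ |z|<\sqrt{|\gamma|}\,u\sqrt{1-\tfrac23u}\}$, which is bounded. The orbit cannot leave this set backward. It cannot reach $u=0$, because there $E=z^2/2\ge0$. While it stays inside, $u>0$ and so $E$ keeps decreasing backward. Therefore the backward orbit exists for all $\xi$, stays in the set, and its $\alpha$-limit set is a nonempty compact invariant set on which $E$ is constant.

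By a LaSalle argument (strict dissipation where $z\ne0$ and $u\ge0$), that set lies in $\{z=0\}$ and is invariant, so it consists of equilibria. Since $E$ there is strictly below $0=E(0,0)$, the only candidate is $(1,0)$. This gives the heteroclinic orbit and also the bounds \eqref{eq:boundsTravellingWave}: $|\gamma|=1/((\lambda^2-c^2)\ea\es)$, and $u\sqrt{1-\frac23u}$ is maximal at $u=1$ with value $1/\sqrt3$.

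\textbf{Step 2: recover $v$ and $w$.} We set $v=cu$. We recover $w$ from the second equation of \eqref{eq:systemODEuandV}, which is a linear first-order ODE in $w$ with $c\neq0$. We take its unique bounded solution, which tends to $2\equilibriumCoefficientLinear u_\pm$ at $\pm\infty$; uniqueness holds because the homogeneous solution $e^{\xi/(c\es)}$ is unbounded. We then check, via the equivalence of Appendix \ref{app:equivalentSecondU}, that the triple solves \eqref{eq:systemWaves3}, and smoothness follows from the ODE.

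\textbf{Main obstacle: uniqueness up to translation.} The stable manifold of the saddle is one-dimensional, so it suffices to exclude the other branch, the one entering from $u<0$. That branch would have to cross $u=0$ at some $\xi_0$ with $z\neq0$, hence with $E(\xi_0)>0$. After $\xi_0$ (for $u<0$) the sign of the dissipation is not controlled, so the energy alone does not close the argument. I would try to show instead that on the $u<0$ side the orbit, followed backward, escapes to $u\to-\infty$: there $V\to-\infty$ and the restoring force $-\gamma(u-u^2)$ pushes away from $0$. Alternatively, I would show that before reaching $\xi_0$ the orbit must already have left the $u\ge0$ region where $E<E(\xi_0)$, which contradicts $E(\xi)\to E(1,0)<0$ as $\xi\to-\infty$ only if some monotonicity persists. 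I expect this step to need the most care.
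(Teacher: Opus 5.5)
\textbf{Gap: uniqueness up to translation is left open.} Uniqueness is part of the statement, and your proposal does not prove it. You reduce it correctly to excluding the $u<0$ branch of the one-dimensional stable manifold of $(0,0)$. As you note, the energy cannot settle this, and neither of your suggested routes is needed. The missing idea is an invariant region.

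In the $(u,z)$-plane, the open quadrant $\{u<0,\ z>0\}$ cannot be left by the flow run backward in $\xi$:
on $\{u=0,\ z>0\}$ one has $u'=z>0$, and on $\{z=0,\ u<0\}$ one has $z'=-\gamma\,u(1-u)<0$, because $\gamma<0$.

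The $u<0$ branch starts in this quadrant, since near the saddle $z\approx\mu_s u>0$, where $\mu_s<0$ is the stable eigenvalue. Suppose it reached $u\ge 0$ going backward, and let $\xi_0$ be the last zero of $u$. Then $z(\xi_0)\le 0$, and $z(\xi_0)=0$ is excluded because $(0,0)$ is an equilibrium, so $z(\xi_0)<0$. Since $z>0$ for large $\xi$, at the last zero $\xi_2>\xi_0$ of $z$ you would need $z'(\xi_2)\ge 0$ with $u(\xi_2)<0$. This contradicts the sign computed above.

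So this branch stays in $\{u<0\}$ for all $\xi$ and cannot tend to $(1,0)$ as $\xi\to-\infty$. This is exactly the paper's third-quadrant argument written in the reversed variables $(x,y)=(u,-z)$. Together with the one-dimensionality of the stable manifold, it gives a unique heteroclinic orbit up to translation. Any solution of the full system yields such an orbit through the appendix, and then $v=cu$ and $w$, the bounded solution of the second equation, are determined.

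\textbf{The rest of your argument is correct.} Your existence proof is essentially the paper's, run in $\xi$ instead of $\tau=-\xi$. The paper places a point of the unstable manifold of the reversed system inside the droplet $\mathscr{D}$ using the slope comparison \eqref{eq:conditionPentes}. You instead get $E<0$ along the $u>0$ branch directly from $E\to 0$ and the strict increase of $E$ near the saddle, which is slightly cleaner. Your bootstrap trapping in $\{E<0,\ u>0\}$ and LaSalle on the $\alpha$-limit set match the paper and yield \eqref{eq:boundsTravellingWave}.

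One minor point in Step 2: the appendix only proves that solutions of \eqref{eq:systemODEuandV} solve \eqref{eq:secondOrderU}, not the converse. To see that your bounded $w$ also satisfies the first equation of \eqref{eq:systemODEuandV}, apply $-c\,\mathrm{d}_\xi+1/\es$ to that equation's residual $R$ and use \eqref{eq:secondOrderU} together with the second equation. This gives $-cR'+R/\es=0$, so $R=Ce^{\xi/(c\es)}$, and boundedness of $u,u',w,w'$ forces $C=0$. The paper avoids this step by writing $w$ explicitly from the two equations.
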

  Notice that the first bound in \eqref{eq:boundsTravellingWave} allows overshoots above $u_- = 1$ (but not undershoots below $u_+ = 0$): the heteroclinic profile $u(\xi)$ is indeed \strong{not necessarily monotone}, especially in the vicinity of $u_- = 1$ and in the case where this equilibrium is an unstable focus, see \Cref{lemma:classificationEquilibriaBurgers}.
  Notice that the right-hand side of \eqref{eq:enoughFriction} rapidly goes to zero as $\lambda$ increases, hence the condition is generally not far from $\equilibriumCoefficientLinear>0$.
  Moreover, there is no upper bound on $\equilibriumCoefficientLinear$.
\begin{proof}[Proof of \Cref{prop:SufficientExistenceTWBurgers}]
  The proof starts by considering \eqref{eq:BurgersSecondOrderTravellingEq} (for $u$) or \eqref{es:systemBurgers} (for $u$ and $z$), prove that they admit a unique smooth heteroclinic orbit connecting the equilibria, and then construct from this $v$ and $w$, solutions of \eqref{eq:systemWaves3} with \eqref{eq:atInfinity}.
  To prove the existence of the heteroclinic, after having reversed the ``time-arrow'' to use $u_- = 1$ as a stable equilibrium, we argue in two stages.
  \begin{enumerate}
    \item First, by the (un)stable manifold theorem \cite[Section 2.7]{perko2013differential}, we prove that we can select a point close to $(u_+, 0) = (0, 0)$ such that the backward trajectory starting from it reaches $(u_+, 0) = (0, 0)$ at $-\infty$.
    \item Second, we prove that the forward trajectory starting from the point selected at the previous stage remains trapped in an energy-well.
    We conclude that it reaches $(u_-, 0) = (1, 0)$ at $+\infty$ using the Lasalle's invariance principle, \confer{} \cite[Section 9.2]{hirsch2013differential}.
  \end{enumerate}
  Before detailing the demonstration, we note that \eqref{eq:boundsTravellingWave} naturally emerges from the idea of proof about existence, thanks to the energy trapping.
  Let us start.

  Assume that $\latticeVelocity>c$.
We thus have $\beta<0$ and $\gamma<0$, while the sign of $\alpha$ is dubious.
According to \Cref{lemma:classificationEquilibriaBurgers}, we consider the necessary condition, based on the linearization,  to have a heteroclinic orbit, which now reads 
\begin{equation}\label{eq:tmp6}
  \equilibriumCoefficientLinear
  >
  \frac{1}{8\latticeVelocity^2}
  \Bigl ( 1-\frac{\es}{\ea}\Bigr ).
\end{equation}
Note that the right-hand side is not necessarily positive.
Now imagine that $u\in [0, 1]$: we thus would like that the damping coefficient $\alpha + \beta u$ has a sign. We thus request $\alpha<0$.
This condition is 
\begin{equation*}
  \equilibriumCoefficientLinear
  >
  \frac{1}{8\latticeVelocity^2}
  \Bigl ( 1+\frac{\es}{\ea}\Bigr ),
\end{equation*}
with a right-hand side which is always larger than the one of \eqref{eq:tmp6}: the condition is more restrictive.
Now that $\alpha<0$, the damping coefficient has the ``wrong'' sign whenever $u\in[0, 1]$.
For this reason, we reverse the coordinate $\xi$: let $\tau = -\xi$, so that $\lim_{\xi\to\pm\infty}\tau = \mp \infty$, and consider $x(\tau) = u(\xi)$.
Equation \eqref{eq:BurgersSecondOrderTravellingEq} becomes
\begin{equation}\label{eq:BurgersSecondOrderTravellingEqReversedTime}
     {x}''(\tau)
    - ( \alpha + \beta {x}(\tau) ){x}'(\tau)
    +\gamma x(\tau)  (1-{x}(\tau)  ) = 0,
\end{equation}
or equivalently 
\begin{equation}\label{eq:systemPendulumReversed}
  \begin{cases}
    x'(\tau)
  =
  y(\tau), \\
  y'(\tau)
  =
  ( \alpha + \beta {x}(\tau) )y(\tau)
  -\gamma x(\tau)  (1-{x}(\tau)  ).
  \end{cases}
\end{equation}
The change of variable has kept the same equilibria and changed their stability: $(x, y) = (1, 0)$ (at $\tau = +\infty$, or $\xi = -\infty$) is now stable, and we exploit the instability of $(x, y) = (0, 0)$ (at $\tau = -\infty$ or $\xi = +\infty$), which is again a saddle point.
\begin{enumerate}
  \item The point $(x, y) = (0, 0)$ is an equilibrium for \eqref{eq:systemPendulumReversed} and, under the current assumptions, the Jacobian of the right-hand side (denoted $\mathscr{J}$) has one eigenvalue with negative real part (stable) and the other one with positive real part (unstable). In particular 
  \begin{equation*}
  \mathscr{J}(x = 0, y = 0)
  =
  \begin{pmatrix}
    0 & 1\\
    -\gamma & \alpha
  \end{pmatrix},
\end{equation*}
hence we are interested in the unstable eigenvalue 
\begin{equation*}
  \lambda_{u}
  =
  \frac{1}{2}
  (\alpha + \sqrt{\alpha^2-4\gamma})>0, 
  \qquad \text{with associated eigenvector}
  \qquad 
  \begin{pmatrix}
    1 \\
    \lambda_{u}
  \end{pmatrix}.
\end{equation*}
By the (un)stable manifold theorem, there exists a 1-dimensional differentiable manifold $\mathscr{U}$, tangent to $E^{u} = \text{span}(\transpose{(1, \lambda_u)})$ at $(x, y) = (0, 0)$, such that for all $(x_0, y_0)\in\mathscr{U}$, we have
\begin{equation*}
  \vectorial{\phi}_{\tau}(x_0, y_0)\in\mathscr{U}, 
  \quad \forall\tau\leq 0 
  \qquad \text{and}\qquad 
  \lim_{\tau\to-\infty} \vectorial{\phi}_{\tau}(x_0, y_0) = (0, 0),
\end{equation*}
where $\vectorial{\phi}_{\tau}(x_0, y_0)$ denotes the flow of  \eqref{eq:systemPendulumReversed} with initial datum $(x_0, y_0)$.

We now consider $(x_0, y_0)\in\mathscr{U}$ close enough to $(x, y) = (0, 0)$ so that it dwells arbitrarily close to $E^{u}$.
The (un)stable manifold theorem ensures $\lim_{\tau\to-\infty} \vectorial{\phi}_{\tau}(x_0, y_0) = (0, 0)$.
The point $(x_0, y_0)$ is our candidate to have $\{\vectorial{\phi}_{\tau}(x_0, y_0)\, :\, \tau\leq 0\}\cup \{\vectorial{\phi}_{\tau}(x_0, y_0)\, :\, \tau\geq 0\}$ be a sought heteroclinic.
Looking at the vector spanning $E^{u}$, we  have that either $(x_0, y_0)$ is in the first quadrant, or it belongs to the third one.
Let us now show that the latter case is incompatible with $\{\vectorial{\phi}_{\tau}(x_0, y_0)\, :\, \tau\leq 0\}\cup \{\vectorial{\phi}_{\tau}(x_0, y_0)\, :\, \tau\geq 0\}$ being a heteroclinic, which by the way shows that---if this particular kind of trajectory exists---it is also unique.
\begin{equation*}
  \text{Let}
  \quad
  x = 0,
  \quad \text{then from \eqref{eq:systemPendulumReversed}}\quad
    x'
  =
  y \quad \text{and}\quad 
  y'
  =
  \alpha y,
\end{equation*}
so for $y<0$, the vector field points inwards the third quadrant.
\begin{equation}\label{eq:vectorField}
  \text{Let}
  \quad
  y = 0,
  \quad \text{then from \eqref{eq:systemPendulumReversed}}\quad
    x'
  =
  0 \quad \text{and}\quad 
  y'
  =
  -\gamma x(1-x),
\end{equation}
so for $x<0$, the vector field points (vertically) inwards the third quadrant.
Therefore, if $(x_0, y_0)$ belongs to the third quadrant, $\{\vectorial{\phi}_{\tau}(x_0, y_0)\, :\, \tau\geq 0\}$ cannot leave it, so in particular there is no chance that $\vectorial{\phi}_{\tau}(x_0, y_0)\to (1, 0)$ as $\tau\to+\infty$.

We therefore fix $(x_0, y_0)\in\mathscr{U}$ with $x_0>0$ and $y_0>0$ with which $\lim_{\tau\to-\infty} \vectorial{\phi}_{\tau}(x_0, y_0) = (0, 0)$.

\item In a standard fashion, let us multiply \eqref{eq:BurgersSecondOrderTravellingEqReversedTime} by $x'=y$, and thus obtain
\begin{equation}\label{eq:energyBalance}
  \frac{\differential}{\differential\tau}
  \underbrace{\Bigl ( 
  \overbrace{\tfrac{1}{2} y(\tau)^2}^{\text{kin. en.}}
  + \overbrace{\gamma \bigl ( -\tfrac{1}{3}x(\tau)^3 + \tfrac{1}{2}x(\tau)^2\bigr )}^{\text{potential energy}}
  \Bigr )}_{=:\mathscr{E}(x(\tau), y(\tau))}
  =
  ( \alpha + \beta {x}(\tau) )(y(\tau))^2.
\end{equation}
The function $\mathscr{E}: \mathscr{O}\to \reals$, where
$\mathscr{O}\definitionEquality \{(x,
y)\,:\,x>-\frac{\alpha}{\beta}\}$ is open, is clearly
differentiable, and $(1, 0)\in \mathscr{O}$. Clearly, $\mathscr{E}$ reaches its minimum for $x\geq 0$ at the unique point $(1,0)$
and $\mathscr{E}(1,0)=\frac{\gamma}{6}<0$. Moreover 
\[
  \dot{\mathscr{E}}(x(\tau), y(\tau)) \leq 0\quad \text{for}\quad
  (x(\tau), y(\tau))\in\mathscr{O}\smallsetminus (1, 0).
\]
Therefore, $\mathscr{E}: \mathscr{O}\to \reals$ is a Lyapunov function for the equilibrium $(1, 0)$.

Also remark that $\mathscr{E}(0, 0) = 0$.
Note that thanks to $\alpha<0$, we automatically have that
\begin{equation}\label{eq:conditionPentes}
  \frac{1}{2}
  (\alpha + \sqrt{\alpha^2-4\gamma})
  <
  \sqrt{-\gamma}.
\end{equation}
Thus, up to pick once again $(x_0, y_0)$ on the unstable manifold and as close to $(0, 0)$ as needed, we have that 
\begin{equation*}
  (x_0, y_0)
  \in 
  \mathring{\mathscr{D}},
  \qquad \text{where}\qquad 
  \mathscr{D}
  \definitionEquality \{  (x,y)\in [0,+\infty)\times \reals \; : \;
  \mathscr{E}(x,y)\leq 0\}.
\end{equation*}
This fact and the origin of \eqref{eq:conditionPentes} come from the fact that the ``droplet'' closed and bounded set $\mathscr{D}$ admits the explicit parametrization
\begin{equation*}
    \mathscr{D}
    =
    \Biggl \{
    (x,y)\in \Bigl [0, \frac{3}{2}\Bigr ]\times \reals \;  : \; |y| \leq x  \sqrt{-\gamma\Bigl (1-\frac{2}{3}x \Bigr )}
    \Biggr \},
  \end{equation*}
and $\sqrt{-\gamma}$ is nothing but the slope of one of the tangents to $\partial\mathscr{D}$ at $(0, 0)$.
Since $(x_0, y_0)
  \in 
 \mathring{\mathscr{D}}$ and by \eqref{eq:energyBalance}, energy cannot increase on the forward trajectory from $(x_0, y_0)$, thus this trajectory cannot escape $\mathring{\mathscr{D}}$:
  \begin{equation*}
    \{\vectorial{\phi}_{\tau}(x_0, y_0)\, :\, \tau\geq 0\}
    \subset 
   \mathring{\mathscr{D}}.
  \end{equation*}
  Note that $\mathring{\mathscr{D}}$ is an open set contained in $\mathscr{O}$, and contains $(1, 0)$.
  Thus, $\mathscr{E}:\mathring{\mathscr{D}}\to \reals$ is a Lyapunov function.
  We continue by defining the closed set
  \begin{equation*}
    \mathscr{P}
    \definitionEquality
    \{(x,y)\in \mathscr{D} \; : \; \mathscr{E}(x, y)\leq \mathscr{E}(x_0, y_0)\}
    \subset\mathring{\mathscr{D}},
  \end{equation*}
  with $ \mathscr{E}(x_0, y_0)<0$, which contains $(1, 0)$ too.
  By the same way of reasoning through energy, $\mathscr{P}$ is positively invariant.
  Using \eqref{eq:energyBalance} and \eqref{eq:vectorField}, we see that there is no entire solution in $\mathscr{P}\smallsetminus (1, 0)$ on which $\mathscr{E}$ is constant (this is the aim of having considered $\mathscr{P}$ instead of ${\mathscr{D}}$, in order to exclude $(0, 0)$).
  We thus conclude by the Lasalle's invariance principle (\cite[Section 9.2]{hirsch2013differential}) that $(1, 0)$ is asymptotically stable and that $\mathscr{P}$ is contained in the basin of attraction of $(1, 0)$, therefore 
  \begin{equation*}
    \lim_{\tau\to+\infty}
    \vectorial{\phi}_{\tau}(x_0, y_0) = (1, 0).
  \end{equation*}

\end{enumerate}

All these arguments give the existence of the heteroclinic. Uniqueness has been previously discussed.
  To check that we have solved the original problem of existence of a travelling wave profile for the relaxation system, we can compute $v(\xi)$ from $u(\xi)$ via $v(\xi) 
    =
    c u(\xi)$, and easily check that $v(\pm\infty) = \flux(u(\pm\infty))$.
  Looking at \eqref{eq:systemODEuandV}, the first equation gives the derivative of $w$ as function of $u$ and $u'$, previously constructed as unique heteroclinic for \eqref{eq:secondOrderU}:
  \begin{equation*}
      {w}'(\xi) =\frac{1}{\lambda^2}\Bigl ( \frac{1}{4}{u}'(\xi) + \frac{{u}(\xi)}{2\ea}\left(
    {u}(\xi) -1\right)\Bigr ),
  \end{equation*}
  which also gives $w'(\pm\infty) = 0$.
  Into the second equation, we find the expression for $w$:
  \begin{equation*}
    {w}(\xi) = 2\equilibriumCoefficientLinear {u}(\xi) - \frac{\es}{2}{u}'(\xi) +\frac{\es}{2\lambda^2}\Bigl ( \frac{1}{4}{u}'(\xi) + \frac{{u}(\xi)}{2\ea}\left(
    {u}(\xi) -1\right)\Bigr ),
  \end{equation*}
  which by the way gives $w(\pm\infty)
    =
    2\equilibriumCoefficientLinear u(\pm\infty)$.
  By construction, this smooth solution $(u(\xi), w(\xi))$ satisfies \eqref{eq:systemODEuandV}.
  It is unique since if another solution of \eqref{eq:systemODEuandV} exists, as previously explained, it also fulfills \eqref{eq:secondOrderU}, which admits a unique heteroclinic as previously demonstrated.

\end{proof}

\subsubsection{Profile for the truncated Chapman-Enskog expansion}

We now consider---for the sake of comparison with actual (numerically obtained) travelling wave profiles for \eqref{eq:systemMoments} (in particular for $u$)---the truncated version of \eqref{CE}.
It can be formally rewritten as
\begin{equation}\label{eq:truncatedCE}
   \partial_t u + \partial_x \flux(u)=\ea \partial_x \left( \left(2\lambda^2
    \equilibriumCoefficientLinear - (\flux^\prime(u))^2 \right)
  \partial_x u\right) .
   \end{equation}
Looking for travelling wave solutions at velocity $c$, we obtain
\[
  -cu^\prime + \left(\flux(u)\right)^\prime=\ea\bigl( (2\lambda^2
  \equilibriumCoefficientLinear -(\flux'(u))^2) u^\prime\bigr)^\prime
  ,
\]
which we integrate once to find
\begin{equation}\label{eq:solTravellingCE}
    u'(\xi)
    =
    \frac{1}{\ea\left(2\lambda^2
    \equilibriumCoefficientLinear -\flux'({u}(\xi))^2\right)}
    \left(
    \flux({u}(\xi))
    -c{u}(\xi)
    -C_v
    \right),
\end{equation}
where $C_v = \flux({u}_{\pm})- c{u}_{\pm}   $.
Of course the heteroclinic orbit fulfilling \eqref{eq:solTravellingCE} can exists only if dissipation is always strictly positive, \confer{} the denominator in the previous expression.

\subsubsection{Numerical results}

\begin{figure}[h]
    \centering
    \includegraphics[width=1\textwidth]{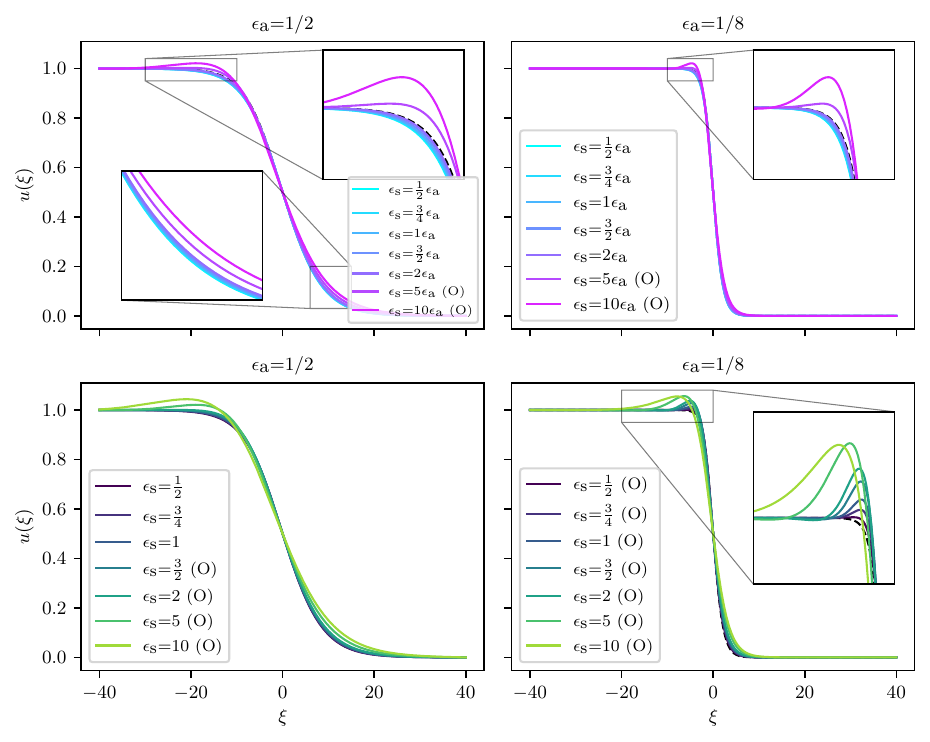}
    \caption{Approximate ${u}$ for $\equilibriumCoefficientLinear=\tfrac{1}{4}$ for different values of relaxation times.
    The black dashed line depicts the travelling wave profile associated with the truncated Chapman-Enskog equation. The mention (O) in the legend indicates oscillation close to the equilibrium $u_- = 1$ according to \Cref{lemma:classificationEquilibriaBurgers}, \idEst{} non-zero imaginary parts in the unstable eigenvalues.}
    \label{fig:steadyTRT-impact-eps-s}
\end{figure}

\begin{figure}[h]
    \centering
    \includegraphics[width=1\textwidth]{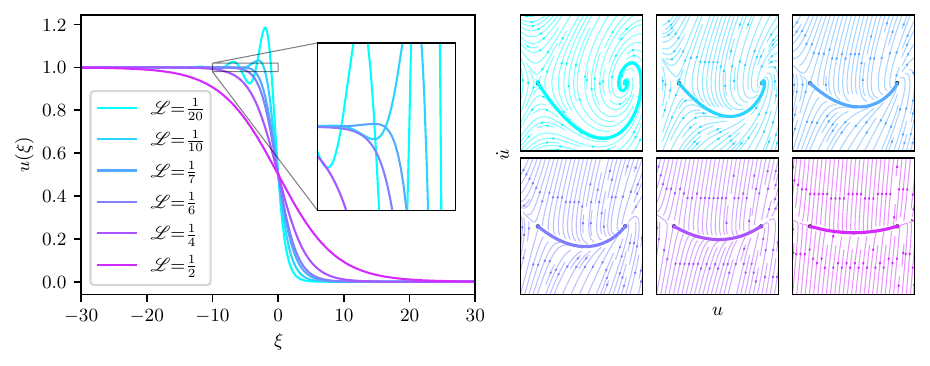}
    \caption{Left: approximate ${u}$ for $\ea=\es = \tfrac{1}{4}$ for different values of $\equilibriumCoefficientLinear$. Right: corresponding phase portraits.}
    \label{fig:steadyTRT-impact-L2}
\end{figure}

\begin{figure}[h]
    \centering
    \includegraphics[width=1\textwidth]{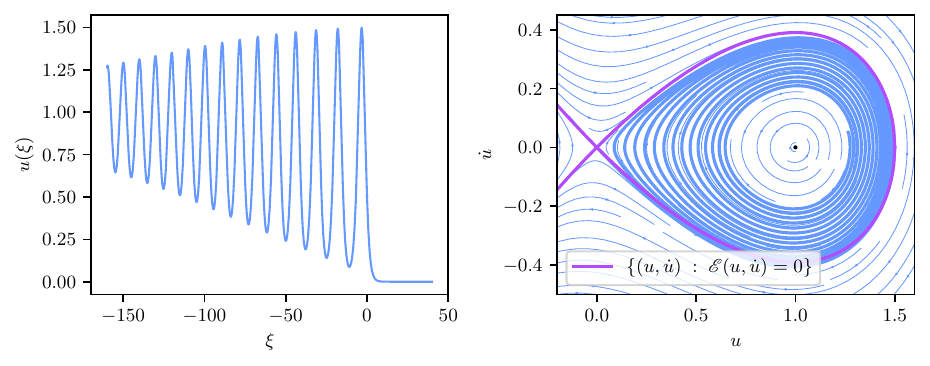}
    \caption{Left: Approximate ${u}$ for $\ea=1$, $\es = \tfrac{1}{4}$, and $\equilibriumCoefficientLinear=\tfrac{1}{87}<\tfrac{1}{8\latticeVelocity^2} (1+\tfrac{\es}{\ea}) = \frac{5}{288}$. Right: corresponding phase portrait and zero-energy isoline, with the energy given in \eqref{eq:energyBalance}.
    Note that zero energy corresponds to the equilibrium $(0, 0)$.}
    \label{fig:steadyTRT-sufficient}
\end{figure}

In order to study the impact of the three parameters $\ea$, $\es$, and $\equilibriumCoefficientLinear $ on the shape of travelling waves, we have implemented a \strong{numerical procedure} to approximate the profile ${u}$.
To this end, we consider \eqref{eq:secondOrderU} to find $u$, and then $v$ and $w$ could be found afterwards.
The numerical procedure is divided into two steps.\footnote{We have also tried to solve with a shooting method for \eqref{eq:shooting} for $\xi\geq 0$, starting from ${u}(0)=\tfrac{1}{2}({u}_- + {u}_+)$ and adjusting ${u}'(0)$ to obtain the heteroclinic connection. We then integrate backwards for $\xi<0$. However, even with a Radau IIA solver of order five \cite{hairer1999stiff}, this procedure is extremely sensitive---let us say ``stiff''---to the slope ${u'}(0)$ and poorly performs on the wide range of parameters that we want to consider.}

\begin{enumerate}
  \item Solve a boundary value problem associated with \eqref{eq:secondOrderU} for $\xi \in [0, L]$ with $L$ large enough\footnote{In all simulations below, we took $L=40$.}. Boundary conditions are 
  \begin{equation*}
    {u}(0) = 
    \frac{{u}_- + {u}_+}{2}
    \qquad \text{and}
    \qquad {u}(L) = {u}_{+}.
  \end{equation*} 
  This gives an approximation of ${u}(\xi)$ for $\xi \geq 0$, which is obtained using the \texttt{solve\textunderscore{}bvp} command in the \texttt{scipy} package.
  \item Solve the initial value problem associated with \eqref{eq:shooting} for $\xi < 0$, using initial datum
  \begin{equation*}
    {u}(0) = 
    \frac{{u}_- + {u}_+}{2}
    \qquad \text{and}
    \qquad 
    {u}'(0) = 
    \lim_{\xi\to 0^+}{u}'(\xi),
  \end{equation*}
  where $\lim_{\xi\to 0^+}{u}'(\xi)$ is approximated with the solution computed at the previous step for $\xi \geq 0$, using a first-order finite difference.
  The initial value problem is approximated with a Radau IIA solver of order five \cite{hairer1999stiff}, implemented within the \texttt{solve\textunderscore{}ivp} command.
\end{enumerate}

As previously said, we consider the states at infinity ${u}_- = 1$ and $u_+ = 0$, and fix the kinetic velocity to $\latticeVelocity = 3$.
We consider a Burgers' flux, so the velocity of the shock is $c=\tfrac{1}{2}$ and the Courant number equals
\begin{equation}\label{tmp4}
  \sup_{u}\frac{|\flux^\prime(u)|}{\lambda} = \frac{1}{3}.
\end{equation}
According to \Cref{rem:onTherationOfpar}, and in particular \eqref{eq:minimalL}, we have a non-empty area in the $\vectorial{\varepsilon}$ plane that ensures quasi-monotonicity, which entails dissipation in the Chapman-Enskog expansion (cf. \Cref{lemma:consQuasiMonCR}), upon having
\begin{equation}\label{tmp5}
  \frac{1}{6}\leq \equilibriumCoefficientLinear\leq \frac{1}{2}.
\end{equation}

\begin{itemize}
    \item We first fix $\equilibriumCoefficientLinear  = \tfrac{1}{4}$ and change the relaxation times.
    Through \Cref{rem:onTherationOfpar}, we fulfill \eqref{eq:monL2Else} whenever 
    \begin{equation}\label{tmp3}
      \frac{3}{4}\ea\leq \es\leq \frac{3}{2}\ea.
    \end{equation}

    Results are presented in \Cref{fig:steadyTRT-impact-eps-s}, and we notice the following facts.
    \begin{itemize}
      \item The thickness of the transition is \strong{essentially determined by the value of $\ea$}. Indeed, the smaller $\ea$, the thinner the transition layer.
      \item The value of $\es$ \strong{contributes to a lesser extent}.
      \item In the upper row, three choices (second, third, and fourth) comply with \eqref{tmp3}. 
      In the lower row, the first two values on the left plot meet  \eqref{tmp3}, and none on the right one.
      When this constraint is violated with $\es>\ea$ by a lot, we see that the solution dwells outside $[0, 1]$ behind the transition layer, and thus is non-monotone.
      This fact questions the fact that the Courant number be given by \eqref{tmp4} in these circumstances.

      When \eqref{tmp3} is fulfilled, discrepancies with respect to the travelling wave solution of the truncated Chapman-Enskog expansion, \confer{} \eqref{eq:solTravellingCE}, are moderate, and the profile ${u}(\xi)$ strongly resembles to 
      \begin{equation*}
        \frac{{u}_- +{u}_+}{2}
        +
        \frac{{u}_- -{u}_+}{2}
        \textnormal{tanh}
        \Bigl ( 
          -\frac{{u}_- -{u}_+}{8\ea\latticeVelocity^2\equilibriumCoefficientLinear}
          \xi
          \Bigr 
        ).
      \end{equation*}

      Finally, we notice that linearizations close to $u_- = 1$, see \Cref{lemma:classificationEquilibriaBurgers} convey a precise description of oscillations (thus lack of monotonicity of the profile).
    \end{itemize}

    \item We then fix $\ea =\es = \frac{1}{4}$, and vary $\equilibriumCoefficientLinear $, sometimes violating \eqref{tmp5}.
    Results are provided in \Cref{fig:steadyTRT-impact-L2}.
    As for $\ea$, we see that the value of $\equilibriumCoefficientLinear $ \strong{modifies the thickness of the transition}. 
    This is compatible with the Chapman-Enskog expansion, which suggest that larger $\equilibriumCoefficientLinear $ increase the dissipation of the relaxation model.
    The profiles become non-monotone and no longer stay in $[0, 1]$ when \eqref{tmp5} is not fulfilled. 
    In particular, oscillations behind the transition become larger and larger as $\equilibriumCoefficientLinear\to 0$.

    Notice that the value of $\equilibriumCoefficientLinear$ that determines between oscillation or not, see \Cref{lemma:classificationEquilibriaBurgers}, is equal in these conditions to 
    \begin{equation*}
      \frac{\sqrt{35}}{36} 
      \in (\tfrac{1}{7}, \tfrac{1}{6}),
    \end{equation*}
    see \Cref{lemma:classificationEquilibriaBurgers}.
    This is indeed precisely reproduced on the simulation, and hence indicates that linearizations provide an accurate description of the behavior of the travelling wave.
    For all the considered values of $\equilibriumCoefficientLinear$, since they are larger than $\tfrac{1}{36}$ (the right-hand side of \eqref{eq:enoughFriction}), the sufficient conditions from \Cref{prop:SufficientExistenceTWBurgers} for the existence of a travelling wave are met.
    Note that the solution does not grow above $\frac{3}{2}$, accordingly to \eqref{eq:boundsTravellingWave}.
    \item Finally, we considered a quite extreme case where $\ea=1$, $\ea = \tfrac{1}{4}$, and $\equilibriumCoefficientLinear = \tfrac{1}{87}$, which is (moderately) below the value by \eqref{eq:enoughFriction}: a sufficient condition for the existence of the travelling wave.
    The result in \Cref{fig:steadyTRT-sufficient} seems to empirically confirm that \eqref{eq:enoughFriction} is not necessary.
    Indeed, since $\alpha>0$, the energy $\mathscr{E}$ in \eqref{eq:energyBalance} grows positive in the vicinity of the unstable equilibrium, but the dynamics is still attracted back to the droplet $\mathscr{D}$, where it eventually dwells. 
    Very interestingly, the trajectory passes quite close to the bounds by \eqref{eq:boundsTravellingWave} that are deduced by the zero-energy isoline.
\end{itemize}

   \section{Numerical scheme for the relaxation system: fixed $(\ea, \es)$}\label{sec:numericalScheme}
   
   We now introduce a numerical scheme for \eqref{system1} under a specific form.
   The aim is not to approximate the solution of \eqref{system1} \emph{per se}, but rather to use the numerical scheme at convergence to deduce useful properties on the solution of \eqref{system1}.
  Remark that $(\ea, \es)$ are now fixed.

   \subsection{Algorithm}
   
   Let $\spaceStep>0$ be the uniform space step of the spatial grid.
   We select the step of the time grid, denoted $\timeStep$, such that $\lambda\frac{\Delta t}{\Delta
      x}=1$, where $\lambda>0$ is the velocity the appears on the left-hand side of \eqref{system1}.
     The scheme we consider is inspired by lattice Boltzmann methods, in the sense that thanks to the previous equality, discretize the left-hand side of \eqref{system1} by upwind schemes yields shifts on the grid.
      This is done because of the very simple structure it provides.
      Still, one could have employed any monotone scheme for the linear transport equation.
      However, contrarily to lattice Boltzmann schemes, we solve the right-hand side of \eqref{system1} exactly.
      
      After an initialization of the data at equilibrium:
      \begin{equation}
      \discrete{f}^{\vectorial{\varepsilon}, 0}_{i, j}=\distributionFunctionLetter_i^{\atEquilibrium}\Bigl ( \frac{1}{\spaceStep} \int_{ (j-\frac{1}{2})\spaceStep}^{ (j+\frac{1}{2})\spaceStep} u^\initial(\spaceVariable)\differential{\spaceVariable} \Bigr), 
      \qquad j \in\relatives,
      \quad \indexVelocity=\zeroVelocitySymbol,\positiveVelocitySymbol, \negativeVelocitySymbol,
  \end{equation}
     the algorithms reads as follows.
     Let $\indexTime\in\naturals$.
\begin{itemize}
  \item Relaxation. Set $\discrete{u}^{\vectorial{\varepsilon}, n}_{j} = \discrete{f}^{\vectorial{\varepsilon}, n}_{\zeroVelocitySymbol,j} + \discrete{f}^{\vectorial{\varepsilon}, n}_{\positiveVelocitySymbol,j} + \discrete{f}^{\vectorial{\varepsilon}, n}_{\negativeVelocitySymbol,j}$ and
    \begin{equation}\label{collision}
    \left\{
      \begin{aligned}
        &
        \discrete{f}^{\vectorial{\varepsilon},n, \collided}_{\zeroVelocitySymbol,j}=\left(1-\expo^{-\frac{\dt}{\es}}\right)\distributionFunctionLetter^{\atEquilibrium}_{\zeroVelocitySymbol}(\discrete{u}^{\vectorial{\varepsilon}, n}_j)+\expo^{-\frac{\dt}{\es}}
        \discrete{f}^{\vectorial{\varepsilon}, n}_{\zeroVelocitySymbol,j},\\
          &   \discrete{f}^{\vectorial{\varepsilon},n,\collided}_{\pm,j}=\ud\left( \expo^{-\frac{\dt}{\es}}+
            \expo^{-\frac{\dt}{\ea}} \right)\discrete{f}^{\vectorial{\varepsilon}, n}_{\pm,j}+\left(
            1-\ud\left(\expo^{-\frac{\dt}{\es}}+\expo^{-\frac{\dt}{\ea}}\right)\right)\distributionFunctionLetter^{\atEquilibrium}_{\pm}(\discrete{u}^{\vectorial{\varepsilon}, n}_j)
          +\ud\left(\exa-\exs\right)\left(
            \distributionFunctionLetter^{\atEquilibrium}_{\mp}(\discrete{u}^{\vectorial{\varepsilon}, n}_j)-\discrete{f}^{\vectorial{\varepsilon}, n}_{\mp,j}
          \right),
        \end{aligned}
      \right.
    \end{equation}
    for $j\in\relatives$.
    If we denote
    \begin{equation}\label{omegaId}
      \oa\definitionEquality1-\expo^{-\frac{\dt}{\ea}}
      \qquad\textnormal{and}\qquad
      \os\definitionEquality 1-\expo^{-\frac{\dt}{\es}},        
    \end{equation}
    we note that \eqref{collision} is nothing but the relaxation step for the TRT lattice Boltzmann scheme of \cite{aregba2026monotonicity}.
    However, here we want to look separately at the dependence on the relaxation parameters and on $\dt$.
    Note that $\oa, \os\in (0, 1)$.

    
    Let us now describe how \eqref{collision} is obtained: we solve exactly the ordinary differential system $\frac{\differential}{\differential\timeVariable}\vectorial{f}^{\vectorial{\varepsilon}}(\timeVariable)=\vectorial{G}(\vectorial{f}^{\vectorial{\varepsilon}}(\timeVariable))$ where $\vectorial{G}$ is the right-hand side in \eqref{system1}. 
  This is done by integrating the right-hand side of the equations on the moments \eqref{eq:systemMoments} and then go back to the distribution functions.
  From the first equation, we deduce that $u^{\vectorial{\varepsilon}}(t) = u^{\vectorial{\varepsilon}}(0)$, so that the second and third equations read
   \begin{equation}
      \left\{
  \begin{aligned}
  &\frac{\differential}{\differential\timeVariable}v^{\vectorial{\varepsilon}}(t) =-\frac{1}{\ea}\left(v^{\vectorial{\varepsilon}}(t)
    -\flux(u^{\vectorial{\varepsilon}}(t))\right) = (v^{\vectorial{\varepsilon}}(t)-\flux(u^{\vectorial{\varepsilon}}(0)))',\\
      &\frac{\differential}{\differential\timeVariable}w^{\vectorial{\varepsilon}}(t) =-\frac{1}{\es}\left(w^{\vectorial{\varepsilon}}(t) - 2\equilibriumCoefficientLinear u^{\vectorial{\varepsilon}}(t)\right)
      =(w^{\vectorial{\varepsilon}}(t)-2\equilibriumCoefficientLinear u^{\vectorial{\varepsilon}}(0))'.
  \end{aligned}
  \right.
\end{equation}
This system is integrated by 
\begin{equation*}
    v^{\vectorial{\varepsilon}}(t) = e^{-\frac{t}{\ea}} v^{\vectorial{\varepsilon}}(0) + (1-e^{-\frac{t}{\ea}})\flux(u^{\vectorial{\varepsilon}}(0))
    \qquad \textnormal{and}\qquad 
    w^{\vectorial{\varepsilon}}(t) = e^{-\frac{t}{\es}} w^{\vectorial{\varepsilon}}(0) + (1-e^{-\frac{t}{\es}})2\equilibriumCoefficientLinear  u^{\vectorial{\varepsilon}}(0).
\end{equation*}
Morally, we have been able to exactly integrate the system in a basis which is not the one of the distribution function, which would be the case in the BGK setting.
Coming back to the distribution functions and integrating over a time step $\timeStep$ gives \eqref{collision}.

    \item Transport: 
    \begin{equation}\label{tr}
      \discrete{f}^{\vectorial{\varepsilon}, n+1}_{i,j}=\discrete{f}^{\vectorial{\varepsilon}, n,\collided}_{i,j-c_i/\lambda}, 
      \qquad j\in\relatives, 
      \quad 
      i=\zeroVelocitySymbol, \positiveVelocitySymbol, \negativeVelocitySymbol.
    \end{equation}
\end{itemize}

\subsection{Monotonicity of the scheme and link with quasi-monotonicity of the relaxation system}
    The monotonicity (not only quasi-monotonicity) conditions of the scheme\footnote{More precisely, of the relaxation \eqref{collision}.} above are  as in \cite[Proposition 3.3]{aregba2026monotonicity}:
    \[
      \omega_s (1-2\equilibriumCoefficientLinear )\geq \max(0,\omega_s-1)
      \qquad \textnormal{and}\qquad
      \omega_a \max_{u\in[-\maximumInitialDatum, \maximumInitialDatum]}\frac{|\flux^\prime(u)|}{2\lambda}\leq \omega_s \equilibriumCoefficientLinear +\ud \min(2-\omega_a-\omega_s,0,\omega_a-\omega_s).
    \]
    This property is crucial to obtain the estimations on the numerical solution of \cite{aregba2026monotonicity}.
    Here the first condition becomes:
    \[
      (1-\exs)(1-2\equilibriumCoefficientLinear )\geq 0,
    \]
    since only the regime $\os< 1$ can be achieved under the identification by \eqref{omegaId}.
    This becomes $\equilibriumCoefficientLinear  \leq \ud$, and the second condition is
    \[
      (1-\exa)\max_{u\in[-\maximumInitialDatum, \maximumInitialDatum]}\frac{|\flux^\prime(u)|}{2\lambda}\leq(1-\exs)\equilibriumCoefficientLinear  +\ud \min(0,\exs-\exa).
    \]
    This entails
    \begin{equation*}
        \equilibriumCoefficientLinear  \geq 
\frac{1}{2(1-e^{-\frac{\timeStep}{\es}})}
\Bigl ( \max_{u\in[-\maximumInitialDatum, \maximumInitialDatum]}\frac{(1-e^{-\frac{\timeStep}{\ea}})|\flux^\prime(u)|}{\lambda} + (\exs-\exa)^- \Bigr )> 0,
    \end{equation*}
    therefore $\equilibriumCoefficientLinear  \in (0,\ud]$.

\begin{itemize}
    \item Let $\ea \leq \es$, hence
$\exs-\exa \geq 0$ and the monotonicity conditions that we look at are
\begin{equation}\label{L2}
  \left\{
    \begin{aligned}
      & \equilibriumCoefficientLinear  \in (0,\tfrac{1}{2}],\\
      &   (1-\exa)\max_{u\in[-\maximumInitialDatum, \maximumInitialDatum]}\frac{|\flux^\prime(u)|}{\lambda}\leq 2(1-\exs)\equilibriumCoefficientLinear .
    \end{aligned}
  \right.
\end{equation}
    \item Let $\ea \geq \es$, hence
$\exs-\exa \leq 0$ and the monotonicity
conditions become
\begin{equation}\label{L2Other}
  \left\{
    \begin{aligned}
      & \equilibriumCoefficientLinear  \in (0,\tfrac{1}{2}],\\
      &   (1-\exa)\max_{u\in[-\maximumInitialDatum, \maximumInitialDatum]}\frac{|\flux^\prime(u)|}{\lambda}\leq2(1-\exs)\equilibriumCoefficientLinear  
      +(1-e^{-\frac{\timeStep}{\ea}})
      -(1-e^{-\frac{\timeStep}{\es}}).
    \end{aligned}
  \right.
\end{equation}
\end{itemize}

\begin{proposition}[Quasi-monotonicity of the relaxation system implies monotonicity of the numerical scheme]
  If the assumptions of \Cref{mono1} are fulfilled, then \eqref{L2} (respectively \eqref{L2Other}) is satisfied.
  \end{proposition}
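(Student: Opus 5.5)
The plan is to reduce both \eqref{L2} and \eqref{L2Other} to inequalities between the ratios $(1-\exa)/(1-\exs)$ and $\es/\ea$. These follow from the concavity of $g(x)\definitionEquality 1-\expo^{-x}$ on $[0,+\infty)$. Set $C\definitionEquality\max_{u\in[-\maximumInitialDatum,\maximumInitialDatum]}|\flux'(u)|/\lambda$, $x_a\definitionEquality\timeStep/\ea$ and $x_s\definitionEquality\timeStep/\es$. Since $g$ is concave with $g(0)=0$, the map $x\mapsto g(x)/x$ is non-increasing on $(0,+\infty)$. Hence for $0<y\leq x$ we have $g(x)/g(y)\leq x/y$, which is the only analytic fact I intend to use. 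The first line of \eqref{L2} and \eqref{L2Other}, namely $\equilibriumCoefficientLinear\in(0,\tfrac12]$, is exactly \eqref{L2cond}, so only the second line needs proof.

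\emph{Case $\ea\leq\es$, with $\equilibriumCoefficientLinear\neq\tfrac12$.} Here $x_a\geq x_s$, so $(1-\exa)/(1-\exs)=g(x_a)/g(x_s)\leq x_a/x_s=\es/\ea$. By \eqref{cond1}, $C\leq 2\ea\equilibriumCoefficientLinear/\es$, so
\begin{equation*}
(1-\exa)\,C\leq (1-\exa)\frac{2\ea\equilibriumCoefficientLinear}{\es}\leq (1-\exs)\,2\equilibriumCoefficientLinear ,
\end{equation*}
which is \eqref{L2}.

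\emph{Case $\ea\geq\es$, with $\equilibriumCoefficientLinear\neq\tfrac12$.} Dividing by $1-\exa>0$, the inequality in \eqref{L2Other} is equivalent to
\begin{equation*}
C\leq 1-\frac{1-\exs}{1-\exa}\,(1-2\equilibriumCoefficientLinear).
\end{equation*}
Now $x_s\geq x_a$, so $(1-\exs)/(1-\exa)\leq x_s/x_a=\ea/\es$. Since $1-2\equilibriumCoefficientLinear\geq0$, the right-hand side above is at least $1-\frac{\ea}{\es}(1-2\equilibriumCoefficientLinear)$. By the second entry of the minimum in \eqref{cond1}, this quantity is at least $C$, which gives \eqref{L2Other}.

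\emph{Case $\equilibriumCoefficientLinear=\tfrac12$.} I expect this to be the main obstacle, because only \eqref{subchar}, i.e.\ $C\leq1$, is available. For $\ea\geq\es$ there is no difficulty: \eqref{L2Other} reduces to $(1-\exa)C\leq 1-\exa$, which holds. For $\ea<\es$, however, \eqref{L2} reads $(1-\exa)C\leq 1-\exs$, and this is not implied by $C\leq1$. My approach is to use that $w\equiv u$ (equivalently $\discrete f_{\zeroVelocitySymbol}\equiv0$) is preserved exactly by \eqref{collision} when initialized at equilibrium: the symmetric relaxation then acts trivially, and $\es$ plays no role in the dynamics, as noted for the \lbmScheme{1}{2} model. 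One may therefore replace $\es$ by $\ea$ without changing the scheme on the relevant invariant set $\{w=u\}$. In that BGK situation both conditions collapse to $C\leq1$, i.e.\ \eqref{subchar}. I would check this reduction explicitly in \eqref{collision}: on $\{\discrete f_{\zeroVelocitySymbol}=0\}$, the terms with $\exs$ combine to give a dependence on $\exa$ only, and the required monotonicity then holds under $C\leq1$.
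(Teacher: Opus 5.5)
For $\equilibriumCoefficientLinear\neq\tfrac12$ your argument is correct and is the paper's. The paper reduces both cases to the fact that $y\mapsto y(1-\expo^{-\timeStep/y})$ is increasing. That is your statement that $x\mapsto(1-\expo^{-x})/x$ is non-increasing, after the substitution $x=\timeStep/y$, and the algebra in each case coincides.

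For $\equilibriumCoefficientLinear=\tfrac12$ and $\ea<\es$ you have found a real defect in the statement, but your fix does not prove what is stated.

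\begin{itemize}
\item \textbf{The paper's gap.} The paper's proof does not treat this case separately. It tacitly uses $C\le 2\ea\equilibriumCoefficientLinear/\es=\ea/\es$ from \eqref{cond1}, whereas \Cref{mono1} only grants \eqref{subchar}.
\item \textbf{The statement is false here.} When $\ea<\es$ we have $1-\exa>1-\exs$. Take $C=1$, e.g.\ $\flux(u)=u^2/2$, $\maximumInitialDatum=1$, $\latticeVelocity=1$. Then \eqref{subchar} holds but the second line of \eqref{L2} fails. So no argument can close this sub-case.
\item \textbf{Your reduction is sound.} When $\equilibriumCoefficientLinear=\tfrac12$, $[\minvec,\maxvec]$ is contained in $\{\discrete f_\circ=0\}$. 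On that set \eqref{collision} becomes $\discrete f^\star_\pm=\distributionFunctionLetter^{\atEquilibrium}_\pm(\discrete u)+\exa\bigl(\discrete f_\pm-\distributionFunctionLetter^{\atEquilibrium}_\pm(\discrete u)\bigr)$, which is monotone under $C\le1$. The only derivative of the full three-variable map that can be negative is $\partial\discrete f^\star_\pm/\partial\discrete f_\circ=\tfrac12\os\pm\tfrac{\oa}{2\latticeVelocity}\flux'$. In the contraction argument it only multiplies $\discrete f_\circ-\discrete g_\circ=0$.
\item \textbf{But it proves something else.} ``Replacing $\es$ by $\ea$'' changes the inequality being verified. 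What you establish is monotonicity of the relaxation on the invariant set, not \eqref{L2}.
\end{itemize}

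Say this explicitly and present the result as a corrected proposition. Either assume $C\le\min(1,\ea/\es)$ also when $\equilibriumCoefficientLinear=\tfrac12$, or replace the conclusion by monotonicity of \eqref{collision} on $[\minvec,\maxvec]$. The latter is what the $\ell^1$-contraction and invariance arguments behind \Cref{prop:propertiesEnsuringExtractionScheme} actually need.
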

\begin{proof}
Let us start with the case $\ea \leq \es$.
The implication holds if we show that 
\begin{equation*}
    \frac{\ea}{\es}\leq \frac{1-e^{-\frac{\timeStep}{\es}}}{1-e^{-\frac{\timeStep}{\ea}}}.
\end{equation*}
This boils down to prove that for $y>0$, $g(y)=y(1-\expo^{-\frac{\tau}{y}})$ is an increasing function for all $\tau>0$.
We get
    \[
      g^\prime(y)=1-\expo^{-\frac{\tau}{y}}(1+\frac{\tau}{y}).
    \]
    with $g^\prime(0)=1$ and $g^\prime(+\infty)=0$. Moreover
    \[
      g^{\prime \prime}(y)=-\expo^{-\frac{\tau}{y}}\frac{\tau^2}{y^3}<0,
    \]
    which ends the first part of the proof.
    
    For the case $\ea \geq \es$, the proof would be achieved if we manage to show that 
    \begin{equation*}
        2\frac{\ea}{\es}\equilibriumCoefficientLinear  + (1-\frac{\ea}{\es})
        \leq 
        2\frac{1-e^{-\frac{\timeStep}{\es}}}{1-e^{-\frac{\timeStep}{\ea}}}\equilibriumCoefficientLinear 
        + \Bigl (1-\frac{1-e^{-\frac{\timeStep}{\es}}}{1-e^{-\frac{\timeStep}{\ea}}}\Bigr ).
    \end{equation*}
    Straightforward manipulations show that the previous inequality is equivalent to 
    \begin{equation*}
        (2\equilibriumCoefficientLinear  - 1)\ea (1-e^{-\frac{\timeStep}{\ea}})
    \leq 
    (2\equilibriumCoefficientLinear  - 1)\es( 1-e^{-\frac{\timeStep}{\es}}).
    \end{equation*}
    Since $2\equilibriumCoefficientLinear  - 1\leq 0$, this inequation holds true thanks to the increasing property of $g$.
\end{proof}

\subsection{Properties of the numerical solution}
 We introduce the following notations:
 \begin{equation*}
  \vectorial{f}_{\Delta}^{\vectorial{\varepsilon},\indexTime}(\spaceVariable)
  \definitionEquality
  \sum_{\indexSpace\in\relatives}
  \vectorial{\discrete{f}}_{\indexSpace}^{\vectorial{\varepsilon},\indexTime}\indicatorFunction{((\indexSpace-\frac{1}{2})\spaceStep,(\indexSpace+\frac{1}{2})\spaceStep) }(\spaceVariable)
  \qquad 
  \text{and}
  \qquad 
  \vectorial{f}^{\vectorial{\varepsilon}}_{\Delta}(\timeVariable, \spaceVariable)
  \definitionEquality
  \sum_{\indexTime\in\naturals}
  \vectorial{f}_{\Delta}^{\vectorial{\varepsilon},\indexTime}(\spaceVariable)
  \indicatorFunction{[\indexTime\timeStep, (\indexTime+1)\timeStep)}(\timeVariable)
 \end{equation*}
 and analogous ones for other quantities, such as the sum of the distribution functions.
 We stress that $(\ea, \es)$ are now fixed.
  Gathering Proposition 4.2, Proposition 4.5, and Corollary 4.6 from \cite{aregba2026monotonicity}, we obtain the following.
 \begin{proposition}\label{prop:propertiesEnsuringExtractionScheme}
  Assume that $u^{\initial}\in L^{\infty}(\reals)\cap L^{1}(\reals)\cap \textnormal{BV}(\reals)$.
  Let the assumptions of \Cref{mono1} be fulfilled.
  Denote $\vectorial{\discrete{g}}_{\indexSpace}^{\vectorial{\varepsilon},\indexTime}$ the numerical solution obtained in the same way as $\vectorial{\discrete{f}}_{\indexSpace}^{\vectorial{\varepsilon},\indexTime}$ from an initial datum $v^{\initial}$ such that $\lVert v^{\initial} \rVert_{\infty}\leq \maximumInitialDatum$.
  Then,
  \begin{align}
    &\vectorial{f}^{\vectorial{\varepsilon},n}_\Delta(\spaceVariable) \in [\minvec, \maxvec]
    \qquad \text{and}
    \qquad 
  u^{\vectorial{\varepsilon},n}_\Delta(\spaceVariable) \in [-\maximumInitialDatum, \maximumInitialDatum],\label{eq:compactInvariant}\\
  &\| \vectorial{f}^{\vectorial{\varepsilon},n+1}_\Delta - \vectorial{g}^{\vectorial{\varepsilon},n+1}_\Delta \|_1 = \| \vectorial{f}^{\vectorial{\varepsilon},n, \collided}_\Delta - \vectorial{g}^{\vectorial{\varepsilon},n, \collided}_\Delta \|_1 \leq \| \vectorial{f}^{\vectorial{\varepsilon},n}_\Delta - \vectorial{g}^{\vectorial{\varepsilon},n}_\Delta \|_1 \leq \|u^\initial - v^\initial\|_1, \qquad \text{and}\label{eqcontracL1}\\
  &\textnormal{TV}( \vectorial{f}^{\vectorial{\varepsilon},n+1}_\Delta )= \textnormal{TV}( \vectorial{f}^{\vectorial{\varepsilon},n, \collided}_\Delta )\leq \textnormal{TV}(\vectorial{f}^{\vectorial{\varepsilon},n}_\Delta ) \leq \textnormal{TV}(u^\initial).\label{eqTVD}
  \end{align}
  Moreover, there exists a constant $C_{\textnormal{ec}}>0$ independent of $\ea$ and $\es$ such that for all $t,t^\prime \geq 0$:
\begin{equation}\label{timeequic1}
    \| \vectorial{f}^{\vectorial{\varepsilon}}_\Delta(t, \cdot) - \vectorial{f}^{\vectorial{\varepsilon}}_\Delta(t^\prime, \cdot) \|_1 \leq C_{\textnormal{ec}}(|t-t^\prime|+\dt)\textnormal{TV}(u^\initial).
  \end{equation}
 \end{proposition}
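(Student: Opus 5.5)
The plan is to reduce everything to the monotonicity of the one-step relaxation map \eqref{collision} together with the exact-shift transport \eqref{tr}. By the preceding proposition, the assumptions of \Cref{mono1} imply \eqref{L2} (respectively \eqref{L2Other}). Under the identification \eqref{omegaId}, these are exactly the monotonicity conditions of \cite[Proposition 3.3]{aregba2026monotonicity} for the TRT collision with parameters $\oa,\os\in(0,1)$. Hence the collision operator $\vectorial{\discrete{f}}\mapsto\vectorial{\discrete{f}}^{\collided}$, acting pointwise in $j$, is componentwise non-decreasing on $[\minvec,\maxvec]$. It also conserves $\discrete{u}$ and fixes equilibria. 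Once this identification is noted, the results of \cite{aregba2026monotonicity} (Proposition 4.2, Proposition 4.5, Corollary 4.6) apply verbatim, since that analysis only uses monotonicity and the algebraic structure of the collision, not the specific values of $\omega$'s. I would still sketch why each property holds.

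\emph{Invariant region \eqref{eq:compactInvariant}.} By the lemma preceding \Cref{mono1}, the equilibria are non-decreasing on $[-\maximumInitialDatum,\maximumInitialDatum]$. The initialization uses cell averages with modulus at most $\maximumInitialDatum$, so $\vectorial{\discrete{f}}^{0}_j\in[\minvec,\maxvec]$. Since equilibria are fixed points of the collision, monotonicity gives $\vectorial{\discrete{f}}^{\collided}\in[\vectorial{f}^{\atEquilibrium}(-\maximumInitialDatum),\vectorial{f}^{\atEquilibrium}(\maximumInitialDatum)]$. Transport only permutes values across cells, so the bounds propagate by induction; summing the components gives the bound on $u$.

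\emph{$L^1$ contraction \eqref{eqcontracL1} and TVD \eqref{eqTVD}.} The collision is monotone and conserves $\sum_i\discrete{f}_i$. By the Crandall–Tartar argument, $\sum_i|\discrete{f}^{\collided}_i-\discrete{g}^{\collided}_i|\le\sum_i|\discrete{f}_i-\discrete{g}_i|$ cellwise; summing over $j$ gives the middle inequality. The transport step is an exact shift, so the equality holds. At time $0$, equilibrium data give $\sum_i|f_i^{\atEquilibrium}(a)-f_i^{\atEquilibrium}(b)|=|a-b|$ by monotonicity of each equilibrium, and cell averaging is $L^1$-contractive. Applying the contraction to $\discrete{g}_j=\discrete{f}_{j+1}$ (translation invariance) gives TV diminishing, and the same equilibrium identity bounds the initial TV by $\textnormal{TV}(u^\initial)$.

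\emph{Time equicontinuity \eqref{timeequic1}.} This is the main obstacle, because the bound must be uniform in $(\ea,\es)$ and the collision can be stiff. I would compare $\vectorial{f}^{n+1}$ with $\vectorial{f}^{n}$ through $L^1$ contraction in time: $\|\vectorial{f}^{n+1}-\vectorial{f}^{n}\|_1\le\|\vectorial{f}^{1}-\vectorial{f}^{0}\|_1$, using $\vectorial{g}$ as the solution shifted by one time step. At the initial step the data are at equilibrium, so the collision is the identity: $\vectorial{f}^{0,\collided}=\vectorial{f}^0$, independently of $\ea,\es$. Therefore $\|\vectorial{f}^1-\vectorial{f}^0\|_1\le\sum_i\spaceStep\,\textnormal{TV}(\discrete{f}^0_i)\le C\,\lambda\timeStep\,\textnormal{TV}(u^\initial)$, using $\spaceStep=\lambda\timeStep$. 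Telescoping gives $C_{\textnormal{ec}}\propto\lambda$ with the $+\timeStep$ error from the piecewise-constant-in-time interpolation. The constant is independent of $\ea$ and $\es$.
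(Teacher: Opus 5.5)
Your proposal is correct and follows essentially the same route as the paper. Both arguments use the monotonicity of the exact relaxation step, via \eqref{L2}/\eqref{L2Other}, to obtain the invariant box and the cellwise $\ell^1$-contraction, and both get the remaining estimates from the fact that transport is an exact shift; you prove the contraction with the Crandall--Tartar argument, whereas the paper's sketch uses the mean value theorem and nonnegative coefficients whose column sums equal one. In the time-equicontinuity step, state explicitly that you use the one-step contraction for arbitrary data in $[\minvec,\maxvec]$ rather than \eqref{eqcontracL1}: the shifted sequence $\vectorial{f}^{\vectorial{\varepsilon},n+1}$ does not start from equilibrium data (already $\vectorial{f}^{\vectorial{\varepsilon},1}$ is generally off equilibrium), and your cellwise argument does cover that general case.
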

With the exception of \eqref{eq:compactInvariant}, the proof of \Cref{prop:propertiesEnsuringExtractionScheme} revolves around the $\ell^1$-contractivity of the relaxation, which is sketchily discussed below for the sake of illustration.
\begin{proof}[Sketch of the proof of \Cref{prop:propertiesEnsuringExtractionScheme}]
  Consider $\vectorial{\discrete{f}}, \vectorial{\discrete{g}} \in [\minvec, \maxvec]$ and set $\discrete{u}=\sum_{\indexVelocity}\discrete{f}_{\indexVelocity}\in [-\maximumInitialDatum, \maximumInitialDatum]$ and $\discrete{v}=\sum_{\indexVelocity}\discrete{g}_{\indexVelocity}\in [-\maximumInitialDatum, \maximumInitialDatum]$ (we drop the superscripts $\indexTime$ and $\vectorial{\varepsilon}$, and the subscript $\indexSpace$).
  We obtain, using the relaxation operator  
  \begin{align*}
    \discrete{f}_{\zeroVelocitySymbol}^{\collided}
    -
    \discrete{g}_{\zeroVelocitySymbol}^{\collided}
    &=
    (1-2\relaxationParameterSymmetric \equilibriumCoefficientLinear)
    (
      \discrete{f}_{\zeroVelocitySymbol}
    -
    \discrete{g}_{\zeroVelocitySymbol}
    )
    +\relaxationParameterSymmetric
    (1-2\equilibriumCoefficientLinear)
    (
      (
      \discrete{f}_{\positiveVelocitySymbol}
    -
    \discrete{g}_{\positiveVelocitySymbol}
    )
    +
    (
      \discrete{f}_{\negativeVelocitySymbol}
    -
    \discrete{g}_{\negativeVelocitySymbol}
    )
    ), \\
  \discrete{f}_{\positiveVelocitySymbol}^{\collided}
    -
    \discrete{g}_{\positiveVelocitySymbol}^{\collided}
    &=
    \bigl ( 1-\tfrac{1}{2}(\relaxationParameterSymmetric+\relaxationParameterAntiSymmetric)\bigr )
    (
      \discrete{f}_{\positiveVelocitySymbol}
    -
    \discrete{g}_{\positiveVelocitySymbol}
    )
    +\tfrac{1}{2}(\relaxationParameterAntiSymmetric-\relaxationParameterSymmetric)
    (
      \discrete{f}_{\negativeVelocitySymbol}
    -
    \discrete{g}_{\negativeVelocitySymbol}
    )
    +\relaxationParameterSymmetric\equilibriumCoefficientLinear (\discrete{u}-\discrete{v})
    +\frac{\relaxationParameterAntiSymmetric}{2\latticeVelocity}(\flux(\discrete{u})-\flux(\discrete{v})), \\
    \discrete{f}_{\negativeVelocitySymbol}^{\collided}
    -
    \discrete{g}_{\negativeVelocitySymbol}^{\collided}
    &=
    \bigl ( 1-\tfrac{1}{2}(\relaxationParameterSymmetric+\relaxationParameterAntiSymmetric)\bigr )
    (
      \discrete{f}_{\negativeVelocitySymbol}
    -
    \discrete{g}_{\negativeVelocitySymbol}
    )
    +\tfrac{1}{2}(\relaxationParameterAntiSymmetric-\relaxationParameterSymmetric)
    (
      \discrete{f}_{\positiveVelocitySymbol}
    -
    \discrete{g}_{\positiveVelocitySymbol}
    )
    +\relaxationParameterSymmetric\equilibriumCoefficientLinear (\discrete{u}-\discrete{v})
    -\frac{\relaxationParameterAntiSymmetric}{2\latticeVelocity}(\flux(\discrete{u})-\flux(\discrete{v})).
  \end{align*}
  By virtue of the mean value theorem, denoting $\discrete{w}_{\vartheta} = \vartheta\discrete{u}+(1-\vartheta)\discrete{v}\in [-\maximumInitialDatum, \maximumInitialDatum]$ for $\vartheta\in [0, 1]$, we have 
    \begin{align*}
  \discrete{f}_{\pm}^{\collided}
    -
    \discrete{g}_{\pm}^{\collided}
    =
    (
      \discrete{f}_{\zeroVelocitySymbol}
    -
    \discrete{g}_{\zeroVelocitySymbol}
    )
    \int_0^1\Bigl ( \relaxationParameterSymmetric\equilibriumCoefficientLinear
    \pm
    \frac{\relaxationParameterAntiSymmetric}{2\latticeVelocity}
    \flux'(\discrete{w}_{\vartheta})
    \Bigr )\differential{\vartheta}
    +
    &(
      \discrete{f}_{\pm}
    -
    \discrete{g}_{\pm}
    )
    \int_0^1\Bigl ( 1-\tfrac{1}{2}(\relaxationParameterSymmetric+\relaxationParameterAntiSymmetric )+ \relaxationParameterSymmetric\equilibriumCoefficientLinear\pm
    \frac{\relaxationParameterAntiSymmetric}{2\latticeVelocity}
    \flux'(\discrete{w}_{\vartheta})\Bigr )\differential{\vartheta}
    \\
    +
    &(
      \discrete{f}_{\mp}
    -
    \discrete{g}_{\mp}
    )
    \int_0^1\Bigl (\tfrac{1}{2}(\relaxationParameterAntiSymmetric-\relaxationParameterSymmetric)
    +\relaxationParameterSymmetric\equilibriumCoefficientLinear
    \pm
    \frac{\relaxationParameterAntiSymmetric}{2\latticeVelocity}
    \flux'(\discrete{w}_{\vartheta})
    \Bigr )
    \differential{\vartheta}.
    \end{align*}
    By monotonicity, the integrands above are non-negative, hence by triangle inequality
        \begin{align*}
    |\discrete{f}_{\pm}^{\collided}
    -
    \discrete{g}_{\pm}^{\collided}|
    \leq
    |
      \discrete{f}_{\zeroVelocitySymbol}
    -
    \discrete{g}_{\zeroVelocitySymbol}
    |
    \int_0^1\Bigl ( \relaxationParameterSymmetric\equilibriumCoefficientLinear
    \pm
    \frac{\relaxationParameterAntiSymmetric}{2\latticeVelocity}
    \flux'(\discrete{w}_{\vartheta})
    \Bigr )\differential{\vartheta}
    +
    &|
      \discrete{f}_{\pm}
    -
    \discrete{g}_{\pm}
    |
    \int_0^1\Bigl ( 1-\tfrac{1}{2}(\relaxationParameterSymmetric+\relaxationParameterAntiSymmetric )+ \relaxationParameterSymmetric\equilibriumCoefficientLinear\pm
    \frac{\relaxationParameterAntiSymmetric}{2\latticeVelocity}
    \flux'(\discrete{w}_{\vartheta})\Bigr )\differential{\vartheta}
    \\
    +
    &|
      \discrete{f}_{\mp}
    -
    \discrete{g}_{\mp}
    |
    \int_0^1\Bigl (\tfrac{1}{2}(\relaxationParameterAntiSymmetric-\relaxationParameterSymmetric)
    +\relaxationParameterSymmetric\equilibriumCoefficientLinear
    \pm
    \frac{\relaxationParameterAntiSymmetric}{2\latticeVelocity}
    \flux'(\discrete{w}_{\vartheta})
    \Bigr )
    \differential{\vartheta}.
    \end{align*}
    Summing, simple algebra delivers
    \begin{equation*}
      \sum_{\indexVelocity}
      |\discrete{f}_{\indexVelocity}^{\collided}
    -
    \discrete{g}_{\indexVelocity}^{\collided}|=
      |\discrete{f}_{\zeroVelocitySymbol}^{\collided}
    -
    \discrete{g}_{\zeroVelocitySymbol}^{\collided}|
    +
    |\discrete{f}_{\positiveVelocitySymbol}^{\collided}
    -
    \discrete{g}_{\positiveVelocitySymbol}^{\collided}|
    +
    |\discrete{f}_{\negativeVelocitySymbol}^{\collided}
    -
    \discrete{g}_{\negativeVelocitySymbol}^{\collided}|
    \leq
     |\discrete{f}_{\zeroVelocitySymbol}
    -
    \discrete{g}_{\zeroVelocitySymbol}|
    +
    |\discrete{f}_{\positiveVelocitySymbol}
    -
    \discrete{g}_{\positiveVelocitySymbol}|
    +
    |\discrete{f}_{\negativeVelocitySymbol}
    -
    \discrete{g}_{\negativeVelocitySymbol}|
    =
    \sum_{\indexVelocity}
      |\discrete{f}_{\indexVelocity}
    -
    \discrete{g}_{\indexVelocity}|,
    \end{equation*}
    which is the $\ell^1$-contractivity of the relaxation.
    From this, \eqref{eqcontracL1}, \eqref{eqTVD}, and \eqref{timeequic1} follow taking into account that the transport phase shifts the distribution function without mixing them.
\end{proof}
 Moreover, again from \cite{aregba2026monotonicity} (see Section 4.2), we inherit the following discrete entropy inequality.
 \begin{proposition}\label{prop:discreteEntropy}
  Assume that $u^{\initial}\in L^{\infty}(\reals)\cap L^{1}(\reals)\cap \textnormal{BV}(\reals)$.
  Let the assumptions of \Cref{mono1} be fulfilled.
  Then, for all $\kappa\in\reals$ and for every $\psi\in C_{\textnormal{c}}^{\infty}((0, +\infty)\times \reals)$ such that $\psi\geq 0$, we have 
  \begin{multline*}
    \int_{\timeStep}^{+\infty}
    \int_{-\infty}^{+\infty}
    \sum_{\indexVelocity}
    |\distributionFunctionLetter_{\Delta, \indexVelocity}^{\vectorial{\varepsilon}, \collided}(\timeVariable, \spaceVariable)-\distributionFunctionLetter_{\indexVelocity}^{\atEquilibrium}(\kappa)|
    \frac{\psi_{\Delta}(\timeVariable-\timeStep, \spaceVariable)-\psi_{\Delta}(\timeVariable, \spaceVariable)}{\timeStep}
    \differential\spaceVariable\differential\timeVariable
    \\
    \leq
    \int_{\timeStep}^{+\infty}
    \int_{-\infty}^{+\infty}
    \sum_{\indexVelocity}
    |\distributionFunctionLetter_{\Delta, \indexVelocity}^{\vectorial{\varepsilon}, \collided}(\timeVariable, \spaceVariable)-\distributionFunctionLetter_{\indexVelocity}^{\atEquilibrium}(\kappa)|
    \frac{\psi_{\Delta}(\timeVariable, \spaceVariable + \discreteVelocityLetter_{\indexVelocity} \spaceStep/\latticeVelocity)-\psi_{\Delta}(\timeVariable, \spaceVariable)}{\spaceStep/\latticeVelocity}
    \differential\spaceVariable\differential\timeVariable.
  \end{multline*}
  
 \end{proposition}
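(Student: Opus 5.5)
The plan is the classical discrete kinetic entropy argument: establish a pointwise entropy inequality for the relaxation step, then sum by parts against the transport (shift) step. Fix $\kappa\in\reals$ and write $\vectorial{\discrete{k}} = (\distributionFunctionLetter_{\indexVelocity}^{\atEquilibrium}(\kappa))_{\indexVelocity}$.

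The \emph{first step} is to observe that $\vectorial{\discrete{k}}$ is a fixed point of the relaxation \eqref{collision}. Its moments are $\kappa$, $\flux(\kappa)$ and $2\equilibriumCoefficientLinear\kappa$, which are already at equilibrium, so the collided state equals $\vectorial{\discrete{k}}$.

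The \emph{second step} applies the $\ell^1$-contraction argument from the sketch of proof of \Cref{prop:propertiesEnsuringExtractionScheme} with $\vectorial{\discrete{g}} = \vectorial{\discrete{k}}$. This yields, for each cell and time,
\begin{equation*}
\sum_{\indexVelocity}|\discrete{f}^{\vectorial{\varepsilon},n,\collided}_{\indexVelocity,j}-\distributionFunctionLetter_{\indexVelocity}^{\atEquilibrium}(\kappa)|\leq \sum_{\indexVelocity}|\discrete{f}^{\vectorial{\varepsilon},n}_{\indexVelocity,j}-\distributionFunctionLetter_{\indexVelocity}^{\atEquilibrium}(\kappa)|.
\end{equation*}
The only point to check is that $\kappa$ need not lie in $[-\maximumInitialDatum,\maximumInitialDatum]$, so the mean value points $\discrete{w}_\vartheta$ may leave the range where \eqref{cond1} or \eqref{subchar} guarantees non-negative integrands. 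I would handle this as is standard: if $\kappa\geq\maximumInitialDatum$ (resp. $\kappa\leq-\maximumInitialDatum$), then by \eqref{eq:compactInvariant} and the monotonicity of the equilibria, every component satisfies $\discrete{f}_{\indexVelocity}\leq \distributionFunctionLetter_{\indexVelocity}^{\atEquilibrium}(\maximumInitialDatum)\leq\distributionFunctionLetter_{\indexVelocity}^{\atEquilibrium}(\kappa)$ (resp. $\geq$). Hence $\sum_{\indexVelocity}|\cdot|$ reduces to $\pm\sum_{\indexVelocity}(\cdot)$, which is $\pm(\discrete{u}-\kappa)$ and is conserved by the relaxation, and the inequality becomes an equality. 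The \emph{main obstacle} lies here. I would handle it as the ``Kruzhkov trick'' with $\max(\kappa,\cdot)$ clipping, using that monotonicity of the collision map in the sense of \cite{aregba2026monotonicity} holds on $[\minvec,\maxvec]$, which contains both arguments when $|\kappa|\leq\maximumInitialDatum$.

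The \emph{third step} uses the transport \eqref{tr}. It gives $\discrete{f}^{\vectorial{\varepsilon},n+1}_{\indexVelocity,j}=\discrete{f}^{\vectorial{\varepsilon},n,\collided}_{\indexVelocity,j-c_{\indexVelocity}/\latticeVelocity}$, so combining with the second step at time $n+1$,
\begin{equation*}
\sum_{\indexVelocity}|\discrete{f}^{\vectorial{\varepsilon},n+1,\collided}_{\indexVelocity,j}-\discrete{k}_{\indexVelocity}|\leq\sum_{\indexVelocity}|\discrete{f}^{\vectorial{\varepsilon},n,\collided}_{\indexVelocity,j-c_{\indexVelocity}/\latticeVelocity}-\discrete{k}_{\indexVelocity}|.
\end{equation*}
Multiply by $\psi_{\Delta}\geq0$ evaluated at time level $n+1$ and cell $j$. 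Sum over $j$ and $n\geq0$, and shift the index $j$ in the right-hand side, which moves the shift onto the test function as $\psi_\Delta(\cdot, x+c_{\indexVelocity}\spaceStep/\latticeVelocity)$. Then subtract $\sum|\cdot|\,\psi_\Delta(t,x)$ from both sides and reindex in time so that the left side pairs $\distributionFunctionLetter^{\collided}$ at time $t$ with $\psi_\Delta(t-\timeStep)$. Rewriting the sums as integrals of the piecewise-constant reconstructions and dividing by $\timeStep=\spaceStep/\latticeVelocity$ gives exactly the claimed inequality. Compact support of $\psi$ in $(0,+\infty)$ and $L^1$ bounds make all sums finite; the lower limit $\timeStep$ comes from discarding the first level.
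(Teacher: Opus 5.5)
\textbf{Your argument is complete only for} $\kappa\in[-\maximumInitialDatum,\maximumInitialDatum]$. There it is the standard route; the paper itself gives no proof and imports the inequality from \cite{aregba2026monotonicity}. By the monotonicity lemma, $\vectorial{\discrete{k}}=\vectorial{\distributionFunctionLetter}^{\atEquilibrium}(\kappa)\in[\minvec,\maxvec]$. It is a fixed point of \eqref{collision}, so the $\ell^1$-contraction gives the cellwise relaxation inequality, and the exact shift plus summation by parts does the rest.

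One bookkeeping point. To reach the stated form, multiply
$\sum_{\indexVelocity}|\discrete{f}^{\vectorial{\varepsilon},n+1,\collided}_{\indexVelocity,j}-\discrete{k}_{\indexVelocity}|\leq\sum_{\indexVelocity}|\discrete{f}^{\vectorial{\varepsilon},n,\collided}_{\indexVelocity,j-c_{\indexVelocity}/\latticeVelocity}-\discrete{k}_{\indexVelocity}|$
by the value of $\psi_\Delta$ on the slab $[n\timeStep,(n+1)\timeStep)$, not on the next slab, and sum over $n\geq 1$. The nonnegative contribution of the slab $[0,\timeStep)$ left over on the left-hand side vanishes because $\psi_\Delta=0$ there (for $\timeStep$ small).

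\textbf{The genuine gap is the case $|\kappa|>\maximumInitialDatum$, and it cannot be closed.} Your bound $\distributionFunctionLetter^{\atEquilibrium}_{\indexVelocity}(\maximumInitialDatum)\leq\distributionFunctionLetter^{\atEquilibrium}_{\indexVelocity}(\kappa)$ for $\kappa\geq\maximumInitialDatum$ does not follow from the hypotheses. The equilibria are only known to be non-decreasing on $[-\maximumInitialDatum,\maximumInitialDatum]$. The difference $\distributionFunctionLetter^{\atEquilibrium}_{\pm}(\kappa)-\distributionFunctionLetter^{\atEquilibrium}_{\pm}(\maximumInitialDatum)=\equilibriumCoefficientLinear(\kappa-\maximumInitialDatum)\pm\bigl(\flux(\kappa)-\flux(\maximumInitialDatum)\bigr)/(2\latticeVelocity)$ is negative for one sign once $|\flux(\kappa)-\flux(\maximumInitialDatum)|>2\latticeVelocity\equilibriumCoefficientLinear(\kappa-\maximumInitialDatum)$. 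Nothing excludes this, since $\flux'$ is controlled only on $[-\maximumInitialDatum,\maximumInitialDatum]$. Clipping $\kappa$ does not help either: it gives the inequality for the clipped constant, not for $\vectorial{\distributionFunctionLetter}^{\atEquilibrium}(\kappa)$.

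In fact the inequality itself fails there. Take:
\begin{itemize}
\item $\equilibriumCoefficientLinear=\tfrac{1}{2}$, so $\discrete{f}_{\zeroVelocitySymbol}\equiv 0$;
\item $\flux(u)=u^2/2$ and $u^{\initial}=\indicatorFunction{(-1,0)}$;
\item $\latticeVelocity=2$, so \eqref{subchar} holds;
\item $\kappa=10$.
\end{itemize}
Then $\discrete{k}_{\positiveVelocitySymbol}=\tfrac{35}{2}>\tfrac{5}{8}=\maximumDistribution_{\positiveVelocitySymbol}$ and $\discrete{k}_{\negativeVelocitySymbol}=-\tfrac{15}{2}<-\tfrac{5}{8}=\minimumDistribution_{\negativeVelocitySymbol}$. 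So on $[\minvec,\maxvec]$ one has $\sum_{\indexVelocity}|\discrete{f}_{\indexVelocity}-\discrete{k}_{\indexVelocity}|=25-v/\latticeVelocity$ with $v=\latticeVelocity(\discrete{f}_{\positiveVelocitySymbol}-\discrete{f}_{\negativeVelocitySymbol})$. The relaxation acts as $v^{\collided}=v+\oa(\flux(u)-v)$, hence
\begin{equation*}
\sum_{\indexVelocity}|\discrete{f}^{\collided}_{\indexVelocity}-\discrete{k}_{\indexVelocity}|-\sum_{\indexVelocity}|\discrete{f}_{\indexVelocity}-\discrete{k}_{\indexVelocity}|=\frac{\oa}{\latticeVelocity}\bigl(v-\flux(u)\bigr),
\end{equation*}
which is positive wherever $v>\flux(u)$.

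At time level $2$, consider the first cell lying entirely in $\{x>0\}$. It receives $\discrete{f}_{\negativeVelocitySymbol}=0$ from the untouched zero state and some $\discrete{f}_{\positiveVelocitySymbol}=p\in(0,\tfrac{5}{8}]$ from the left. So $v=2p>p^2/2=\flux(u)$. Choose a test function with $\psi_\Delta$ nonzero only on that cell and on the slab $[\timeStep,2\timeStep)$. The claimed inequality then reduces exactly to the relaxation inequality in that cell, which is violated.

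\textbf{How to repair it.} Restrict the statement to $\kappa\in[-\maximumInitialDatum,\maximumInitialDatum]$, where your proof is complete. This is all \Cref{thm:convergenceEps} needs: for $|\kappa|\geq\maximumInitialDatum$ and a limit $u$ with values in $[-\maximumInitialDatum,\maximumInitialDatum]$, the Kruzhkov inequality is just $\pm$ the weak form of the conservation law.

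Alternatively, add the hypothesis $|\flux(\kappa)-\flux(\sigma\maximumInitialDatum)|\leq 2\latticeVelocity\equilibriumCoefficientLinear|\kappa-\sigma\maximumInitialDatum|$ with $\sigma=\operatorname{sgn}\kappa$ (for instance $\sup_{\reals}|\flux'|\leq 2\latticeVelocity\equilibriumCoefficientLinear$). This is exactly what gives $\maxvec\leq\vectorial{\distributionFunctionLetter}^{\atEquilibrium}(\kappa)$ (resp.\ $\vectorial{\distributionFunctionLetter}^{\atEquilibrium}(\kappa)\leq\minvec$). Under it, your equality argument for $|\kappa|>\maximumInitialDatum$ is correct.
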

 This ends the results that we can extract using the discussion on two-relaxation-times lattice Boltzmann schemes that is presented in  \cite{aregba2026monotonicity}.
 In particular, these results are useless to deduce closeness of the numerical solution to equilibrium that could be used when $\timeStep\to 0$, due to the identification \eqref{omegaId}.
 We now rather have to follow the lines of \cite{MR1766856}, giving analogous results to their Lemma 5.4 and Proposition 5.5.
 \begin{lemma}
Assume that $u^{\initial}\in L^{\infty}(\reals)\cap L^{1}(\reals)\cap \textnormal{BV}(\reals)$.
  Let the assumptions of \Cref{mono1} be fulfilled.  Then, for $\indexTime\geq 1$
  \begin{equation}\label{eq:closenessEquilibrium}
    \lVert
    \vectorial{\distributionFunctionLetter}^{\atEquilibrium}
    (u_{\Delta}^{\vectorial{\varepsilon}, \indexTime, \collided})
    -
    \vectorial{\distributionFunctionLetter}_{\Delta}^{\vectorial{\varepsilon}, \indexTime, \collided}
    \rVert_1
    \leq 
    e^{-\frac{\timeStep}{\max(\ea, \es)}}
    \Bigl ( 
    \lVert
    \vectorial{\distributionFunctionLetter}^{\atEquilibrium}
    (u_{\Delta}^{\vectorial{\varepsilon}, \indexTime-1, \collided})
    -
    \vectorial{\distributionFunctionLetter}_{\Delta}^{\vectorial{\varepsilon}, \indexTime-1, \collided}
    \rVert_1
    +
    2 \latticeVelocity\timeStep \, \textnormal{TV}(u^\initial)
    \Bigr ) .
  \end{equation}
 \end{lemma}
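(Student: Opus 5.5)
We prove \eqref{eq:closenessEquilibrium} by splitting one time step into transport followed by relaxation. Transport can move the distribution away from equilibrium by at most a total-variation amount. Relaxation then contracts the distance to equilibrium by the factor $e^{-\timeStep/\max(\ea,\es)}$.

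\emph{Step 1: relaxation in moment variables.} We work cell by cell. Write $\discrete{u},\discrete{v},\discrete{w}$ for the moments of $\vectorial{\discrete{f}}^{\vectorial{\varepsilon},\indexTime}_{\indexSpace}$ (before relaxation). By the exact integration described after \eqref{omegaId}, relaxation keeps $\discrete{u}$ fixed and acts on the other two moments as
\begin{equation*}
\discrete{v}^{\collided}-\flux(\discrete{u})=\exa(\discrete{v}-\flux(\discrete{u})),
\qquad
\discrete{w}^{\collided}-2\equilibriumCoefficientLinear\discrete{u}=\exs(\discrete{w}-2\equilibriumCoefficientLinear\discrete{u}).
\end{equation*}
Since $u^{\vectorial{\varepsilon},\indexTime,\collided}_{\Delta}=u^{\vectorial{\varepsilon},\indexTime}_{\Delta}$, the vector $\vectorial{\distributionFunctionLetter}^{\atEquilibrium}(\discrete{u})-\vectorial{\discrete{f}}$ has zero sum. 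It is therefore determined by its $v$- and $w$-components:
\begin{equation*}
\distributionFunctionLetter_{\pm}^{\atEquilibrium}-\discrete{f}_{\pm}=\tfrac{1}{2}(2\equilibriumCoefficientLinear\discrete{u}-\discrete{w})\pm\tfrac{1}{2\latticeVelocity}(\flux(\discrete{u})-\discrete{v}),
\qquad
\distributionFunctionLetter_{\zeroVelocitySymbol}^{\atEquilibrium}-\discrete{f}_{\zeroVelocitySymbol}=-(2\equilibriumCoefficientLinear\discrete{u}-\discrete{w}).
\end{equation*}
Put $a=\flux(\discrete{u})-\discrete{v}$ and $b=2\equilibriumCoefficientLinear\discrete{u}-\discrete{w}$. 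In the $\ell^1$ norm of the three components this deviation equals
\begin{equation*}
|b|+\tfrac{1}{2}\bigl|b+a/\latticeVelocity\bigr|+\tfrac{1}{2}\bigl|b-a/\latticeVelocity\bigr|=|b|+\max(|b|,|a|/\latticeVelocity).
\end{equation*}
This quantity is a norm $N(a,b)$. It is monotone in $(|a|,|b|)$ and absolutely homogeneous, so replacing $(a,b)$ by $(\exa a,\exs b)$ multiplies it by at most $\max(\exa,\exs)=e^{-\timeStep/\max(\ea,\es)}$. This gives a pointwise contraction of the distance to equilibrium under relaxation. I expect this to be the main point. The contraction factor must come out exactly as $\max$ of the two exponentials, so we must use the norm $N$ rather than a naive bound through the collision matrix \eqref{collision}: the off-diagonal term $\tfrac12(\exa-\exs)$ would give a worse constant.

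\emph{Step 2: effect of transport.} We apply Step 1 at step $\indexTime$, giving
\begin{equation*}
\lVert\vectorial{\distributionFunctionLetter}^{\atEquilibrium}(u^{\indexTime,\collided}_\Delta)-\vectorial{\distributionFunctionLetter}^{\indexTime,\collided}_\Delta\rVert_1\le e^{-\frac{\timeStep}{\max(\ea,\es)}}\lVert\vectorial{\distributionFunctionLetter}^{\atEquilibrium}(u^{\indexTime}_\Delta)-\vectorial{\distributionFunctionLetter}^{\indexTime}_\Delta\rVert_1 .
\end{equation*}
It then suffices to compare the deviation after transport with the deviation before transport. Since $\vectorial{\distributionFunctionLetter}^{\indexTime}_{\Delta}$ is obtained from $\vectorial{\distributionFunctionLetter}^{\indexTime-1,\collided}_{\Delta}$ by \eqref{tr}, the triangle inequality gives
\begin{equation*}
\lVert\vectorial{\distributionFunctionLetter}^{\atEquilibrium}(u^{\indexTime}_\Delta)-\vectorial{\distributionFunctionLetter}^{\indexTime}_\Delta\rVert_1
\le
\lVert\vectorial{\distributionFunctionLetter}^{\atEquilibrium}(u^{\indexTime-1,\collided}_\Delta)-\vectorial{\distributionFunctionLetter}^{\indexTime-1,\collided}_\Delta\rVert_1
+\lVert\vectorial{\distributionFunctionLetter}^{\atEquilibrium}(u^{\indexTime}_\Delta)-\vectorial{\distributionFunctionLetter}^{\atEquilibrium}(u^{\indexTime-1,\collided}_\Delta)\rVert_1 .
\end{equation*}
Here we use that a shift is an $L^1$ isometry, so $\lVert\vectorial{\distributionFunctionLetter}^{\indexTime}_\Delta - S\vectorial{\distributionFunctionLetter}^{\indexTime-1,\collided}_\Delta\rVert$ vanishes componentwise. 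More precisely, we write
\begin{equation*}
\distributionFunctionLetter^{\atEquilibrium}_i(u^{\indexTime}_\Delta)-\distributionFunctionLetter^{\indexTime}_{\Delta,i}
=\bigl[\distributionFunctionLetter^{\atEquilibrium}_i(u^{\indexTime}_\Delta)-S_i\distributionFunctionLetter^{\atEquilibrium}_i(u^{\indexTime-1,\collided}_\Delta)\bigr]+S_i\bigl[\distributionFunctionLetter^{\atEquilibrium}_i(u^{\indexTime-1,\collided}_\Delta)-\distributionFunctionLetter^{\indexTime-1,\collided}_{\Delta,i}\bigr],
\end{equation*}
where $S_i$ is the shift by $c_i\timeStep$.

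\emph{Step 3: bounding the remainder.} The remainder $\sum_i\lVert\distributionFunctionLetter^{\atEquilibrium}_i(u^{\indexTime}_\Delta)-S_i\distributionFunctionLetter^{\atEquilibrium}_i(u^{\indexTime-1,\collided}_\Delta)\rVert_1$ must be at most $2\latticeVelocity\timeStep\,\textnormal{TV}(u^\initial)$. The $i=\zeroVelocitySymbol$ term is not shifted, and $u^{\indexTime}_\Delta$ is the sum of the three shifted collided components. Using the monotonicity of the equilibria (the lemma above) and their Lipschitz constants, we control everything by $\sum_{i=\pm}\lVert S_i\distributionFunctionLetter^{\indexTime-1,\collided}_{\Delta,i}-\distributionFunctionLetter^{\indexTime-1,\collided}_{\Delta,i}\rVert_1\le \latticeVelocity\timeStep\sum_{i=\pm}\textnormal{TV}(\distributionFunctionLetter^{\indexTime-1,\collided}_{\Delta,i})$, together with the analogous term for the equilibria themselves. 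Monotonicity of $\distributionFunctionLetter^{\atEquilibrium}_i$ with $\sum_i \distributionFunctionLetter^{\atEquilibrium}_i(u)=u$ gives $\sum_i\textnormal{TV}(\distributionFunctionLetter^{\atEquilibrium}_i(u))=\textnormal{TV}(u)$. Combined with \eqref{eqTVD}, this bounds both contributions by $\latticeVelocity\timeStep\,\textnormal{TV}(u^\initial)$, hence by $2\latticeVelocity\timeStep\,\textnormal{TV}(u^\initial)$ in total. Inserting this into Step 1 yields \eqref{eq:closenessEquilibrium}. The constant bookkeeping in Step 3 is routine but needs care so that the total is exactly $2\latticeVelocity\timeStep$.
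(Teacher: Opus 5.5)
Your proof is correct. It has the same two-stage structure as the paper's: relaxation contracts the $\ell^1$ distance to equilibrium by $e^{-\timeStep/\max(\ea,\es)}$, and transport adds a total-variation term. Both stages are carried out differently, however.

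In Step 1 you pass to moment variables, whereas the paper applies the triangle inequality directly to \eqref{collision}. Your claim that this direct route would give a worse constant is mistaken. On $(\distributionFunctionLetter^{\atEquilibrium}_{+}-\discrete{f}_{+},\,\distributionFunctionLetter^{\atEquilibrium}_{-}-\discrete{f}_{-})$, relaxation acts with diagonal entries $\tfrac12(\exa+\exs)$ and off-diagonal entries $\tfrac12(\exs-\exa)$. Each column therefore has absolute sum $\tfrac12(\exa+\exs+|\exa-\exs|)=\max(\exa,\exs)$, which is exactly the paper's computation. Your $N(a,b)$ is the same $\ell^1$ quantity written in another basis.

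In Step 3 the paper expands each $\distributionFunctionLetter^{\atEquilibrium}_i(\discrete{u}^{n}_j)$ by the mean value theorem around the upwind collided value $\discrete{u}^{n-1,\collided}_{j-c_i/\lambda}$ and bounds every derivative by $1$. You instead compare with the same-cell value and then shift the equilibrium. This uses the exact identities $\sum_i|\distributionFunctionLetter^{\atEquilibrium}_i(a)-\distributionFunctionLetter^{\atEquilibrium}_i(b)|=|a-b|$ and $\sum_i\textnormal{TV}(\distributionFunctionLetter^{\atEquilibrium}_i(u))=\textnormal{TV}(u)$. They hold because the equilibria are non-decreasing on $[-\maximumInitialDatum,\maximumInitialDatum]$ and sum to the identity, and because the discrete $\discrete{u}$ stays in that interval by \eqref{eq:compactInvariant}; cite this last fact explicitly. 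Your bookkeeping gives the constant $2$ exactly. The paper's per-term count with derivative bounds $1$ only yields $2\,\textnormal{TV}(\discrete{f}^{\collided}_{\zeroVelocitySymbol})+3\,\textnormal{TV}(\discrete{f}^{\collided}_{+})+3\,\textnormal{TV}(\discrete{f}^{\collided}_{-})$, so yours is the tighter of the two.

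When writing it out:
\begin{itemize}
\item Invoke the exact sum identity in the first contribution rather than the individual Lipschitz constants of the equilibria, which would not give $2$ in general.
\item Put the shifts $S_i$ into the first display of Step 2, consistently with the decomposition you give right after it.
\end{itemize}
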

 \begin{proof}
  Using the local conservation of $u$ during the relaxation, we obtain
  \begin{multline*}
    \lVert
    \vectorial{\distributionFunctionLetter}^{\atEquilibrium}
    (u_{\Delta}^{\vectorial{\varepsilon}, \indexTime, \collided})
    -
    \vectorial{\distributionFunctionLetter}_{\Delta}^{\vectorial{\varepsilon}, \indexTime, \collided}
    \rVert_1
    =
    \spaceStep\sum_{\indexSpace\in\relatives}
    \sum_{\indexVelocity}
    |
    \distributionFunctionLetter_{\indexVelocity}^{\atEquilibrium}(\discrete{u}_{\indexSpace}^{\vectorial{\varepsilon}, \indexTime})
    -\distributionFunction_{\indexVelocity, \indexSpace}^{\vectorial{\varepsilon}, \indexTime, \collided}
    |\\
    \leq 
    \spaceStep\sum_{\indexSpace\in\relatives}
    \Biggl ( 
    e^{-\frac{\timeStep}{\es}}
    |
    \distributionFunctionLetter_{\zeroVelocitySymbol}^{\atEquilibrium}(\discrete{u}_{\indexSpace}^{\vectorial{\varepsilon}, \indexTime})
    -\distributionFunction_{\zeroVelocitySymbol, \indexSpace}^{\vectorial{\varepsilon}, \indexTime}|
    +\tfrac{1}{2}
    \Bigl ( 
      e^{-\frac{\timeStep}{\ea}}+e^{-\frac{\timeStep}{\es}}
      +
      |e^{-\frac{\timeStep}{\ea}}-e^{-\frac{\timeStep}{\es}}|
      \Bigr )
      \Bigl ( 
|
    \distributionFunctionLetter_{\positiveVelocitySymbol}^{\atEquilibrium}(\discrete{u}_{\indexSpace}^{\vectorial{\varepsilon}, \indexTime})
    -\distributionFunction_{\positiveVelocitySymbol, \indexSpace}^{\vectorial{\varepsilon}, \indexTime}|
    +
    |
    \distributionFunctionLetter_{\negativeVelocitySymbol}^{\atEquilibrium}(\discrete{u}_{\indexSpace}^{\vectorial{\varepsilon}, \indexTime})
    -\distributionFunction_{\negativeVelocitySymbol, \indexSpace}^{\vectorial{\varepsilon}, \indexTime}|
      \Bigr )
    \Biggr ),
  \end{multline*}
  where the inequality comes the triangle inequality after using \eqref{collision}.
  Distinguishing the case $\ea\leq \es$ from $\ea\geq \es$, one deduces that 
  \begin{equation*}
    \lVert
    \vectorial{\distributionFunctionLetter}^{\atEquilibrium}
    (u_{\Delta}^{\vectorial{\varepsilon}, \indexTime, \collided})
    -
    \vectorial{\distributionFunctionLetter}_{\Delta}^{\vectorial{\varepsilon}, \indexTime, \collided}
    \rVert_1
    \leq 
    \spaceStep 
    \max(e^{-\frac{\timeStep}{\ea}}, e^{-\frac{\timeStep}{\es}})
    \sum_{\indexSpace\in\relatives}
    \sum_{\indexVelocity}
    |
    \distributionFunctionLetter_{\indexVelocity}^{\atEquilibrium}(\discrete{u}_{\indexSpace}^{\vectorial{\varepsilon}, \indexTime})
    -\distributionFunction_{\indexVelocity, \indexSpace}^{\vectorial{\varepsilon}, \indexTime}|
    =
    \spaceStep 
    e^{-\frac{\timeStep}{\max(\ea, \es)}}
    \sum_{\indexSpace\in\relatives}
    \sum_{\indexVelocity}
    |
    \distributionFunctionLetter_{\indexVelocity}^{\atEquilibrium}(\discrete{u}_{\indexSpace}^{\vectorial{\varepsilon}, \indexTime})
    -\distributionFunction_{\indexVelocity, \indexSpace}^{\vectorial{\varepsilon}, \indexTime}|.
  \end{equation*}
  We now take transport into account. 
  Let us point out that $\conservedVariableDiscrete_{\indexSpace}^{\vectorial{\varepsilon}, \indexTime} = \distributionFunction_{\zeroVelocitySymbol, \indexSpace}^{\vectorial{\varepsilon}, \indexTime-1, \collided} + \distributionFunction_{\positiveVelocitySymbol, \indexSpace-1}^{\vectorial{\varepsilon}, \indexTime-1, \collided} + \distributionFunction_{\negativeVelocitySymbol, \indexSpace+1}^{\vectorial{\varepsilon}, \indexTime-1, \collided}$, which yields 
  \begin{align*}
    \distributionFunctionLetter_{\zeroVelocitySymbol}^{\atEquilibrium}(\conservedVariableDiscrete_{\indexSpace}^{\vectorial{\varepsilon}, \indexTime})
    =
    \distributionFunctionLetter_{\zeroVelocitySymbol}^{\atEquilibrium}(\conservedVariableDiscrete_{\indexSpace}^{\vectorial{\varepsilon}, \indexTime-1, \collided}
    -\distributionFunction_{\positiveVelocitySymbol, \indexSpace}^{\vectorial{\varepsilon}, \indexTime-1, \collided}+\distributionFunction_{\positiveVelocitySymbol, \indexSpace-1}^{\vectorial{\varepsilon}, \indexTime-1, \collided}
    -\distributionFunction_{\negativeVelocitySymbol, \indexSpace}^{\vectorial{\varepsilon}, \indexTime-1, \collided}+\distributionFunction_{\negativeVelocitySymbol, \indexSpace+1}^{\vectorial{\varepsilon}, \indexTime-1, \collided}),\\
    \distributionFunctionLetter_{\positiveVelocitySymbol}^{\atEquilibrium}(\conservedVariableDiscrete_{\indexSpace}^{\vectorial{\varepsilon}, \indexTime})
    =
    \distributionFunctionLetter_{\positiveVelocitySymbol}^{\atEquilibrium}(\conservedVariableDiscrete_{\indexSpace-1}^{\vectorial{\varepsilon}, \indexTime-1, \collided}
    -\distributionFunction_{\zeroVelocitySymbol, \indexSpace-1}^{\vectorial{\varepsilon}, \indexTime-1, \collided} + \distributionFunction_{\zeroVelocitySymbol, \indexSpace}^{\vectorial{\varepsilon}, \indexTime-1, \collided}
    -\distributionFunction_{\negativeVelocitySymbol, \indexSpace-1}^{\vectorial{\varepsilon}, \indexTime-1, \collided}+\distributionFunction_{\negativeVelocitySymbol, \indexSpace+1}^{\vectorial{\varepsilon}, \indexTime-1, \collided}),
    \\
    \distributionFunctionLetter_{\negativeVelocitySymbol}^{\atEquilibrium}(\conservedVariableDiscrete_{\indexSpace}^{\vectorial{\varepsilon}, \indexTime})
    =
    \distributionFunctionLetter_{\negativeVelocitySymbol}^{\atEquilibrium}(\conservedVariableDiscrete_{\indexSpace+1}^{\vectorial{\varepsilon}, \indexTime-1, \collided}
    -\distributionFunction_{\zeroVelocitySymbol, \indexSpace+1}^{\vectorial{\varepsilon}, \indexTime-1, \collided} + \distributionFunction_{\zeroVelocitySymbol, \indexSpace}^{\vectorial{\varepsilon}, \indexTime-1, \collided}
    -\distributionFunction_{\positiveVelocitySymbol, \indexSpace+1}^{\vectorial{\varepsilon}, \indexTime-1, \collided}+\distributionFunction_{\positiveVelocitySymbol, \indexSpace-1}^{\vectorial{\varepsilon}, \indexTime-1, \collided}).
  \end{align*}
  The fact that $\conservedVariableDiscrete_{\indexSpace}^{\vectorial{\varepsilon}, \indexTime}, \conservedVariableDiscrete_{\indexSpace}^{\vectorial{\varepsilon}, \indexTime-1, \collided}, \conservedVariableDiscrete_{\indexSpace\mp 1}^{\vectorial{\varepsilon}, \indexTime-1, \collided} \in [-\maximumInitialDatum, \maximumInitialDatum]$ ensures that---by virtue of the mean value theorem---there exist $\tilde{\conservedVariable}_1, \tilde{\conservedVariable}_2, \tilde{\conservedVariable}_3 \in (-\maximumInitialDatum, \maximumInitialDatum)$ such that 
    \begin{align*}
    \distributionFunctionLetter_{\zeroVelocitySymbol}^{\atEquilibrium}(\conservedVariableDiscrete_{\indexSpace}^{\vectorial{\varepsilon}, \indexTime})
    =
    \distributionFunctionLetter_{\zeroVelocitySymbol}^{\atEquilibrium}(\conservedVariableDiscrete_{\indexSpace}^{\vectorial{\varepsilon}, \indexTime-1, \collided})
    +
    \frac{\differential\distributionFunctionLetter_{\zeroVelocitySymbol}^{\atEquilibrium}(\tilde{\conservedVariable}_1)}{\differential\conservedVariable}
    (-\distributionFunction_{\positiveVelocitySymbol, \indexSpace}^{\vectorial{\varepsilon}, \indexTime-1, \collided}+\distributionFunction_{\positiveVelocitySymbol, \indexSpace-1}^{\vectorial{\varepsilon}, \indexTime-1, \collided}
    -\distributionFunction_{\negativeVelocitySymbol, \indexSpace}^{\vectorial{\varepsilon}, \indexTime-1, \collided}+\distributionFunction_{\negativeVelocitySymbol, \indexSpace+1}^{\vectorial{\varepsilon}, \indexTime-1, \collided}),\\
    \distributionFunctionLetter_{\positiveVelocitySymbol}^{\atEquilibrium}(\conservedVariableDiscrete_{\indexSpace}^{\vectorial{\varepsilon}, \indexTime})
    =
    \distributionFunctionLetter_{\positiveVelocitySymbol}^{\atEquilibrium}(\conservedVariableDiscrete_{\indexSpace-1}^{\vectorial{\varepsilon}, \indexTime-1, \collided})
    +
    \frac{\differential\distributionFunctionLetter_{\positiveVelocitySymbol}^{\atEquilibrium}(\tilde{\conservedVariable}_2)}{\differential\conservedVariable}(
    -\distributionFunction_{\zeroVelocitySymbol, \indexSpace-1}^{\vectorial{\varepsilon}, \indexTime-1, \collided} + \distributionFunction_{\zeroVelocitySymbol, \indexSpace}^{\vectorial{\varepsilon}, \indexTime-1, \collided}
    -\distributionFunction_{\negativeVelocitySymbol, \indexSpace-1}^{\vectorial{\varepsilon}, \indexTime-1, \collided}+\distributionFunction_{\negativeVelocitySymbol, \indexSpace+1}^{\vectorial{\varepsilon}, \indexTime-1, \collided}),
    \\
    \distributionFunctionLetter_{\negativeVelocitySymbol}^{\atEquilibrium}(\conservedVariableDiscrete_{\indexSpace}^{\vectorial{\varepsilon}, \indexTime})
    =
    \distributionFunctionLetter_{\negativeVelocitySymbol}^{\atEquilibrium}(\conservedVariableDiscrete_{\indexSpace+1}^{\vectorial{\varepsilon}, \indexTime-1, \collided})
    +
    \frac{\differential\distributionFunctionLetter_{\negativeVelocitySymbol}^{\atEquilibrium}(\tilde{\conservedVariable}_3)}{\differential\conservedVariable}
    (-\distributionFunction_{\zeroVelocitySymbol, \indexSpace+1}^{\vectorial{\varepsilon}, \indexTime-1, \collided} + \distributionFunction_{\zeroVelocitySymbol, \indexSpace}^{\vectorial{\varepsilon}, \indexTime-1, \collided}
    -\distributionFunction_{\positiveVelocitySymbol, \indexSpace+1}^{\vectorial{\varepsilon}, \indexTime-1, \collided}+\distributionFunction_{\positiveVelocitySymbol, \indexSpace-1}^{\vectorial{\varepsilon}, \indexTime-1, \collided}).
  \end{align*}
  We recall that 
  \begin{equation*}
    0\leq \frac{\differential\distributionFunctionLetter_{\indexVelocity}^{\atEquilibrium}(\tilde{\conservedVariable}_{\indexVelocity})}{\differential\conservedVariable}
    \leq 1, 
    \qquad \indexVelocity=\zeroVelocitySymbol, \positiveVelocitySymbol, \negativeVelocitySymbol.
  \end{equation*}
  Overall, this discussion provides 
  \begin{multline*}
    \sum_{\indexVelocity}
    |
    \distributionFunctionLetter_{\indexVelocity}^{\atEquilibrium}(\discrete{u}_{\indexSpace}^{\vectorial{\varepsilon}, \indexTime})
    -\distributionFunction_{\indexVelocity, \indexSpace}^{\vectorial{\varepsilon}, \indexTime}|
    \leq 
    \sum_{\indexVelocity}
    |
    \distributionFunctionLetter_{\indexVelocity}^{\atEquilibrium}(\discrete{u}_{\indexSpace-\discreteVelocityLetter_{\indexVelocity}/\latticeVelocity}^{\vectorial{\varepsilon}, \indexTime-1, \collided})
    -\distributionFunction_{\indexVelocity, \indexSpace-\discreteVelocityLetter_{\indexVelocity}/\latticeVelocity}^{\vectorial{\varepsilon}, \indexTime-1, \collided}|
    +
    |\distributionFunction_{\positiveVelocitySymbol, \indexSpace-1}^{\vectorial{\varepsilon}, \indexTime-1, \collided}-\distributionFunction_{\positiveVelocitySymbol, \indexSpace}^{\vectorial{\varepsilon}, \indexTime-1, \collided}|
    +|\distributionFunction_{\negativeVelocitySymbol, \indexSpace+1}^{\vectorial{\varepsilon}, \indexTime-1, \collided}-\distributionFunction_{\negativeVelocitySymbol, \indexSpace}^{\vectorial{\varepsilon}, \indexTime-1, \collided} |\\
    +|
    \distributionFunction_{\zeroVelocitySymbol, \indexSpace}^{\vectorial{\varepsilon}, \indexTime-1, \collided}
    -\distributionFunction_{\zeroVelocitySymbol, \indexSpace-1}^{\vectorial{\varepsilon}, \indexTime-1, \collided} |+|
    \distributionFunction_{\negativeVelocitySymbol, \indexSpace+1}^{\vectorial{\varepsilon}, \indexTime-1, \collided}-\distributionFunction_{\negativeVelocitySymbol, \indexSpace-1}^{\vectorial{\varepsilon}, \indexTime-1, \collided}|
    +
    |\distributionFunction_{\zeroVelocitySymbol, \indexSpace}^{\vectorial{\varepsilon}, \indexTime-1, \collided}-\distributionFunction_{\zeroVelocitySymbol, \indexSpace+1}^{\vectorial{\varepsilon}, \indexTime-1, \collided}|
    +
    |\distributionFunction_{\positiveVelocitySymbol, \indexSpace-1}^{\vectorial{\varepsilon}, \indexTime-1, \collided}-\distributionFunction_{\positiveVelocitySymbol, \indexSpace+1}^{\vectorial{\varepsilon}, \indexTime-1, \collided}|.
  \end{multline*}
  By changes in spatial indices and adding non-negative quantities on the right-hand side, we have 
  \begin{align}
    \sum_{\indexSpace\in\relatives}
    \sum_{\indexVelocity}
    |
    \distributionFunctionLetter_{\indexVelocity}^{\atEquilibrium}(\discrete{u}_{\indexSpace}^{\vectorial{\varepsilon}, \indexTime})
    -\distributionFunction_{\indexVelocity, \indexSpace}^{\vectorial{\varepsilon}, \indexTime}|
    &\leq 
    \sum_{\indexSpace\in\relatives}
    \sum_{\indexVelocity}
    |
    \distributionFunctionLetter_{\indexVelocity}^{\atEquilibrium}(\discrete{u}_{\indexSpace}^{\vectorial{\varepsilon}, \indexTime-1, \collided})
    -\distributionFunction_{\indexVelocity, \indexSpace}^{\vectorial{\varepsilon}, \indexTime-1, \collided}|
    +
    2 \, \textnormal{TV}( \vectorial{f}^{\vectorial{\varepsilon},n-1, \collided}_\Delta )\nonumber
    \\
    &\leq 
    \sum_{\indexSpace\in\relatives}
    \sum_{\indexVelocity}
    |
    \distributionFunctionLetter_{\indexVelocity}^{\atEquilibrium}(\discrete{u}_{\indexSpace}^{\vectorial{\varepsilon}, \indexTime-1, \collided})
    -\distributionFunction_{\indexVelocity, \indexSpace}^{\vectorial{\varepsilon}, \indexTime-1, \collided}|
    +
    2 \, \textnormal{TV}(u^\initial),\label{eq:tmp2}
  \end{align}
  with the second inequality coming from \eqref{eqTVD}.
  This gives the claim.
 \end{proof}

\begin{proposition}\label{prop:deltaFromEquil}
Assume that $u^{\initial}\in L^{\infty}(\reals)\cap L^{1}(\reals)\cap \textnormal{BV}(\reals)$.
  Let the assumptions of \Cref{mono1} be fulfilled.
  For all $\timeVariable\geq 0$,
  \begin{equation*}
    \lVert
    \vectorial{\distributionFunctionLetter}^{\atEquilibrium}(\conservedVariable_{\Delta}^{\vectorial{\varepsilon}}(\timeVariable, \cdot))
    -
    \vectorial{\distributionFunctionLetter}_{\Delta}^{\vectorial{\varepsilon}}(\timeVariable, \cdot)
    \rVert_{1}
    \leq
    2 \latticeVelocity  \textnormal{TV}(u^\initial)
    (\max(\ea, \es)+\timeStep).
  \end{equation*}
\end{proposition}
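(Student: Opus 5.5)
The plan is to iterate the recursive estimate \eqref{eq:closenessEquilibrium} of the previous lemma, sum the resulting geometric series, and then pass from post-relaxation quantities to the piecewise-constant-in-time function $\vectorial{f}^{\vectorial{\varepsilon}}_\Delta(\timeVariable,\cdot)$, which on $[\indexTime\timeStep,(\indexTime+1)\timeStep)$ equals the pre-relaxation state $\vectorial{f}^{\vectorial{\varepsilon},\indexTime}_\Delta$.

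Set $a_\indexTime \definitionEquality \lVert \vectorial{\distributionFunctionLetter}^{\atEquilibrium}(u_\Delta^{\vectorial{\varepsilon},\indexTime,\collided})-\vectorial{\distributionFunctionLetter}_\Delta^{\vectorial{\varepsilon},\indexTime,\collided}\rVert_1$ and $q\definitionEquality e^{-\timeStep/\max(\ea,\es)}\in(0,1)$. The initialization is at equilibrium, so the pre-relaxation defect at $\indexTime=0$ vanishes. Running the first part of the proof of the previous lemma at $\indexTime=0$ (relaxation multiplies the defect by at most $q$) therefore gives $a_0=0$. Unrolling \eqref{eq:closenessEquilibrium} then yields
\[
a_\indexTime\le 2\latticeVelocity\timeStep\,\textnormal{TV}(u^\initial)\sum_{k=1}^{\indexTime}q^k\le 2\latticeVelocity\timeStep\,\textnormal{TV}(u^\initial)\frac{q}{1-q}.
\]
Since $1-e^{-s}\ge s e^{-s}$, we have $\frac{q}{1-q}\le \frac{\max(\ea,\es)}{\timeStep}$, hence $a_\indexTime\le 2\latticeVelocity\,\textnormal{TV}(u^\initial)\max(\ea,\es)$ uniformly in $\indexTime$.

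Next, the defect at the pre-relaxation time $\indexTime\ge1$ must be controlled, since this is what $\vectorial{f}^{\vectorial{\varepsilon}}_\Delta(\timeVariable,\cdot)$ records. Inequality \eqref{eq:tmp2}, established inside the proof of the lemma, gives (after multiplying by $\spaceStep$, and with $\textnormal{TV}$ understood consistently with the normalization already used there) that the pre-relaxation defect at step $\indexTime$ is at most $a_{\indexTime-1}+2\latticeVelocity\timeStep\,\textnormal{TV}(u^\initial)$. Here I use $\spaceStep=\latticeVelocity\timeStep$ to convert the $\spaceStep\,\textnormal{TV}$ term. Combining this with the uniform bound on $a_{\indexTime-1}$ gives $2\latticeVelocity\textnormal{TV}(u^\initial)(\max(\ea,\es)+\timeStep)$. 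For $\indexTime=0$ the defect is zero. Because $u_\Delta^{\vectorial{\varepsilon}}(\timeVariable,\cdot)$ is the sum of the components of $\vectorial{f}_\Delta^{\vectorial{\varepsilon}}(\timeVariable,\cdot)$, this is exactly the claimed bound for every $\timeVariable\ge0$.

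The main obstacle is bookkeeping rather than a genuine difficulty. First, one has to check that the constant in \eqref{eq:tmp2} indeed produces $2\latticeVelocity\timeStep\,\textnormal{TV}(u^\initial)$ in $L^1$ norm, via the $\spaceStep$ factor and $\spaceStep/\timeStep=\latticeVelocity$. Second, the elementary inequality $q/(1-q)\le\max(\ea,\es)/\timeStep$ must be verified, which is where the $\max(\ea,\es)$ term (rather than something $\timeStep$-dependent) emerges.
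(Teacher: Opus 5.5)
Your proposal is correct and follows the paper's proof essentially step for step. You iterate \eqref{eq:closenessEquilibrium} from the vanishing post-relaxation defect at $n=0$, then bound the geometric sum by $\max(\ea,\es)$ via $s\,e^{-s}\leq 1-e^{-s}$ (the paper writes this as $y e^{-y}/(1-e^{-y})\leq 1$), and finally use \eqref{eq:tmp2} with $\spaceStep=\latticeVelocity\timeStep$ to transfer the bound to the pre-relaxation state recorded by $\vectorial{f}^{\vectorial{\varepsilon}}_{\Delta}(\timeVariable,\cdot)$.
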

\begin{proof}
  Iterating from \eqref{eq:closenessEquilibrium} and using the fact that $\lVert
    \vectorial{\distributionFunctionLetter}^{\atEquilibrium}
    (u_{\Delta}^{\vectorial{\varepsilon}, 0, \collided})
    -
    \vectorial{\distributionFunctionLetter}_{\Delta}^{\vectorial{\varepsilon}, 0, \collided}
    \rVert_1 = 0$, we obtain 
  \begin{multline*}
    \lVert
    \vectorial{\distributionFunctionLetter}^{\atEquilibrium}
    (u_{\Delta}^{\vectorial{\varepsilon}, \indexTime, \collided})
    -
    \vectorial{\distributionFunctionLetter}_{\Delta}^{\vectorial{\varepsilon}, \indexTime, \collided}
    \rVert_1
    \leq 
    2 \latticeVelocity  \textnormal{TV}(u^\initial)
    \timeStep
    \,
    e^{-\frac{\timeStep}{\max(\ea, \es)}}
    \sum_{k=0}^{\indexTime-1}
    \Bigl (e^{-\frac{\timeStep}{\max(\ea, \es)}}
    \Bigr )^k
    =
    2 \latticeVelocity  \textnormal{TV}(u^\initial)
    \timeStep
    \,
    e^{-\frac{\timeStep}{\max(\ea, \es)}}
    \frac{1-e^{-\indexTime\frac{\timeStep}{\max(\ea, \es)}}}{1-e^{-\frac{\timeStep}{\max(\ea, \es)}}}\\
    \leq 
    2 \latticeVelocity  \textnormal{TV}(u^\initial)
    \max(\ea, \es) \,
    \frac{\timeStep}{\max(\ea, \es)}    
    \frac{e^{-\frac{\timeStep}{\max(\ea, \es)}}}{1-e^{-\frac{\timeStep}{\max(\ea, \es)}}}
    =
    2 \latticeVelocity  \textnormal{TV}(u^\initial)
    \max(\ea, \es) 
    \, q\Bigl (\frac{\timeStep}{\max(\ea, \es)}\Bigr ),
  \end{multline*}
  where $q(y) = y \, e^{-y}/(1-e^{-y})$.
  One can easily show that $0< q(y) \leq 1$ for $y\geq 0$, hence $\lVert
    \vectorial{\distributionFunctionLetter}^{\atEquilibrium}
    (u_{\Delta}^{\vectorial{\varepsilon}, \indexTime, \collided})
    -
    \vectorial{\distributionFunctionLetter}_{\Delta}^{\vectorial{\varepsilon}, \indexTime, \collided}
    \rVert_1 \leq 2 \latticeVelocity  \textnormal{TV}(u^\initial)
    \max(\ea, \es) $.
    Using \eqref{eq:tmp2} yields the final claim.
\end{proof}

\subsection{Convergence of the numerical scheme and estimates on its limit}

We are now in position to give the following result, which is partially analogous to \cite[Theorem 5.6]{MR1766856}.
\begin{theorem}\label{thm:convergenceNumericsPlusEstimates}
  Let $\ea, \es>0$ be given.
  Assume that $u^{\initial}\in L^{\infty}(\reals)\cap L^{1}(\reals)\cap \textnormal{BV}(\reals)$.
  Let the assumptions of \Cref{mono1} be fulfilled.
For any $T>0$, the numerical solution $\vectorial{f}_{\Delta}^{\vectorial{\varepsilon}}$ converges in $L^{\infty}([0, T]; L^1_{\textnormal{loc}}(\reals)^3)$ to $\vectorial{\distributionFunctionLetter}^{\vectorial{\varepsilon}}\in C^0([0, T]; L^1(\reals)^3)$, the unique solution of the Cauchy problem \eqref{system1}--\eqref{eq:equilibriaForm}--\eqref{f0} and satisfies the following estimates:
      \begin{align}
    \vectorial{f}^{\vectorial{\varepsilon}}(\timeVariable, \spaceVariable) &\in [\minvec, \maxvec], \qquad 
    \textnormal{for almost every }(\timeVariable, \spaceVariable)\in [0, T]\times \reals,\\
  \| \vectorial{f}^{\vectorial{\varepsilon}}(\timeVariable, \cdot) \|_1 &\leq \|u^\initial\|_1, 
  \qquad \text{for all }\timeVariable\in[0, T],\\
  \textnormal{TV}( \vectorial{f}^{\vectorial{\varepsilon}}(\timeVariable, \cdot) ) &\leq \textnormal{TV}(u^\initial), \qquad \text{for all }t\in[0, T], \\
  \| \vectorial{f}^{\vectorial{\varepsilon}}(t, \cdot) - \vectorial{f}^{\vectorial{\varepsilon}}(t^\prime, \cdot) \|_1 &\leq C_{\textnormal{ec}}|t-t^\prime|\textnormal{TV}(u^\initial), \qquad \text{for all }t, t'\in [0, T], \\
  \lVert
    \vectorial{\distributionFunctionLetter}^{\atEquilibrium}(
      {\distributionFunctionLetter}_1^{\vectorial{\varepsilon}}(\timeVariable, \cdot)
      +{\distributionFunctionLetter}_2^{\vectorial{\varepsilon}}(\timeVariable, \cdot)
      +{\distributionFunctionLetter}_3^{\vectorial{\varepsilon}}(\timeVariable, \cdot)
    )
    -
    \vectorial{\distributionFunctionLetter}^{\vectorial{\varepsilon}}(\timeVariable, \cdot)
    \rVert_{1}
    &\leq
    2 \latticeVelocity  \textnormal{TV}(u^\initial)
    \max(\ea, \es), \qquad \text{for all }t\in[0, T].\label{eq:almostEquilibrium}
  \end{align}
  Finally, for all $\kappa\in\reals$ and for every $\psi\in C_{\textnormal{c}}^{\infty}((0, T)\times \reals)$ such that $\psi\geq 0$, we have 
  \begin{equation}\label{eq:krushkovFixedEps}
    -\int_{0}^{T}
    \int_{-\infty}^{+\infty}
    \sum_{\indexVelocity}
    |\distributionFunctionLetter_{\indexVelocity}^{\vectorial{\varepsilon}}(\timeVariable, \spaceVariable)-\distributionFunctionLetter_{\indexVelocity}^{\atEquilibrium}(\kappa)|
    \partial_{\timeVariable}\psi(\timeVariable, \spaceVariable)
    \differential\spaceVariable\differential\timeVariable
    \leq
    \int_{0}^{T}
    \int_{-\infty}^{+\infty}
    \sum_{\indexVelocity}
    |\distributionFunctionLetter_{\indexVelocity}^{\vectorial{\varepsilon}}(\timeVariable, \spaceVariable)-\distributionFunctionLetter_{\indexVelocity}^{\atEquilibrium}(\kappa)|
    \discreteVelocityLetter_{\indexVelocity}
    \partial_{\spaceVariable}\psi(\timeVariable, \spaceVariable)
    \differential\spaceVariable\differential\timeVariable.
  \end{equation}
\end{theorem}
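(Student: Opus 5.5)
We follow the classical compactness route of \cite[Theorem 5.6]{MR1766856}, with $(\ea,\es)$ fixed and $\spaceStep\to 0$ (hence $\timeStep=\spaceStep/\latticeVelocity\to 0$). The first step is \textbf{compactness}. By \Cref{prop:propertiesEnsuringExtractionScheme}, the family $(\vectorial{f}^{\vectorial{\varepsilon}}_\Delta)_\Delta$ has three properties on $[0,T]\times\reals$: it is uniformly bounded in $L^\infty$ (values in $[\minvec,\maxvec]$), uniformly bounded in $L^1$ and in $\textnormal{TV}$, and approximately equicontinuous in time in $L^1$ by \eqref{timeequic1}. Helly's theorem gives relative compactness in $L^1_{\textnormal{loc}}(\reals)^3$ at each rational time. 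A diagonal extraction, combined with \eqref{timeequic1} (whose $\timeStep$ defect vanishes), then yields convergence in $L^\infty([0,T];L^1_{\textnormal{loc}}(\reals)^3)$ of a subsequence to some $\vectorial{f}^{\vectorial{\varepsilon}}$. All estimates pass to the limit by lower semicontinuity of $\textnormal{TV}$ and of the $L^1$ norm (Fatou) and by closedness of $[\minvec,\maxvec]$. The time Lipschitz bound passes since the $C_{\textnormal{ec}}\timeStep$ defect disappears; in particular $\vectorial{f}^{\vectorial{\varepsilon}}\in C^0([0,T];L^1)$. The bound \eqref{eq:almostEquilibrium} follows from \Cref{prop:deltaFromEquil}, using that $\vectorial{\distributionFunctionLetter}^{\atEquilibrium}$ is Lipschitz on $[-\maximumInitialDatum,\maximumInitialDatum]$, so that $\vectorial{\distributionFunctionLetter}^{\atEquilibrium}(u_\Delta)\to\vectorial{\distributionFunctionLetter}^{\atEquilibrium}(u^{\vectorial{\varepsilon}})$ in $L^1_{\textnormal{loc}}$. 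For the $L^1$ bound on $\vectorial f$, we would apply \eqref{eqcontracL1} with $v^\initial=0$ (for which the scheme stays at $0$, since $\flux(0)=0$), and note that at equilibrium $\sum_i|\distributionFunctionLetter^{\atEquilibrium}_i(u)|=|u|$ thanks to the monotonicity of the equilibria.

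The second step is to \textbf{identify the limit as a weak solution} of \eqref{system1}. We multiply the scheme by a test function $\psi\in C^\infty_c([0,T)\times\reals)$ and perform discrete summation by parts. The transport step \eqref{tr} produces $\int f_{\Delta,i}^{\collided}(\psi(t+\timeStep,x+c_i\timeStep)-\psi(t,x))/\timeStep$, which converges to the weak form of $\partial_t f_i+c_i\partial_x f_i$. The relaxation step is the exact flow $\vectorial{f}^{\collided}=\Phi_{\timeStep}(\vectorial f)$ of $\dot{\vectorial f}=\vectorial G(\vectorial f)$. Hence $\vectorial{f}^{\collided}-\vectorial f=\timeStep\,\vectorial G(\vectorial f)+O(\timeStep^2)$ uniformly on $[\minvec,\maxvec]$, with a constant depending on $\ea,\es$ (fixed here). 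Since $\vectorial G$ is Lipschitz on the invariant box, strong $L^1_{\textnormal{loc}}$ convergence lets the source term pass to the limit. The only care needed is that $\vectorial f^{\collided}_\Delta$ and $\vectorial f_\Delta$ share the same limit, because they differ by $O(\timeStep)$ pointwise. The initial datum is recovered because the cell averages converge to $u^\initial$ in $L^1$ and $\vectorial{\distributionFunctionLetter}^{\atEquilibrium}$ is Lipschitz.

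The third step, which I expect to be the \textbf{main obstacle}, is to \textbf{pass to the limit in \Cref{prop:discreteEntropy}} to obtain \eqref{eq:krushkovFixedEps}. The integrand involves $\vectorial f^{\collided}_\Delta$ and the piecewise-constant $\psi_\Delta$, with the time difference quotient shifted by $\timeStep$. We would use $\|\vectorial f^{\collided}_\Delta-\vectorial f_\Delta\|_{L^\infty}=O(\timeStep)$, which holds for fixed $\ea,\es$ since the relaxation moves by $O(\timeStep/\min(\ea,\es))$. We would also use that $\psi_\Delta$ and its difference quotients converge uniformly to $\psi,\partial_t\psi,\partial_x\psi$ on the compact support. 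Since $\vectorial f\mapsto|f_i-\distributionFunctionLetter^{\atEquilibrium}_i(\kappa)|$ is $1$-Lipschitz, strong $L^1_{\textnormal{loc}}$ convergence suffices, and the right-hand quotient converges to $\discreteVelocityLetter_\indexVelocity\partial_x\psi$.

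Finally, \textbf{uniqueness} of the solution in $L^\infty\cap C^0([0,T];L^1_{\textnormal{loc}})$ follows from the Lipschitz character of $\vectorial G$ on the invariant box, via the Duhamel formulation along characteristics and Gronwall's lemma. Uniqueness in turn implies that the whole family, and not only a subsequence, converges.
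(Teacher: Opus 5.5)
Your proposal is correct and follows the same route as the paper's proof. Both use compactness from \Cref{prop:propertiesEnsuringExtractionScheme}, pass to the limit in the estimates of \Cref{prop:propertiesEnsuringExtractionScheme} and \Cref{prop:deltaFromEquil}, identify the limit by consistency of the scheme, and pass to the limit in \Cref{prop:discreteEntropy} using that starred and unstarred quantities differ by $O(\timeStep)$ for fixed $(\ea,\es)$; your version simply spells out details the paper leaves implicit.

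One small adjustment is needed in your uniqueness step. Uniqueness is asserted among all $L^\infty$ weak solutions, and a competing solution is not yet known to stay in the invariant box $[\minvec,\maxvec]$. So run Gronwall with the Lipschitz constant of $\vectorial{G}$ on a ball containing both solutions, which exists because $\flux\in C^1$ is locally Lipschitz.
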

\begin{proof}
  The extraction of a converging subsequence as $\spaceStep\to 0$ (and $\timeStep=\spaceStep/\latticeVelocity\to 0$) can be done by the properties in \Cref{prop:propertiesEnsuringExtractionScheme}, and provides $\vectorial{f}^{\vectorial{\varepsilon}}$.
  Convergence also holds almost everywhere.
  This fact allows passing to the limit in the inequalities of \Cref{prop:propertiesEnsuringExtractionScheme} and \Cref{prop:deltaFromEquil}.
  Moreover, the limit is the solution of the Cauchy problem by consistency of the numerical scheme with it, and can be easily checked.
  Passing to the limit in the entropy inequality by \Cref{prop:discreteEntropy} (starred quantities merge into unstarred ones in the limit) gives the last part of the claim.
\end{proof}
\begin{remark}
    By \cite{MR1409928}, see \Cref{sec:gettingRidOfBV}, we know that a local unique solution to \eqref{system1}--\eqref{eq:equilibriaForm}--\eqref{f0} exists for initial data in $L^{\infty}$ only. 
    \Cref{thm:convergenceNumericsPlusEstimates} proves global existence (plus important estimates to pass to the limit in the relaxation parameters) in the bounded total variation framework.
\end{remark}

\section{Convergence of the relaxation system as $(\ea, \es)\to (0, 0)$}\label{sec:convergenceEps}

\begin{theorem}\label{thm:convergenceEps}
  Assume that $u^{\initial}\in L^{\infty}(\reals)\cap L^{1}(\reals)\cap \textnormal{BV}(\reals)$.
  Let $(\ea, \es) \to (0, 0)$, and---while this convergence takes place---let the assumptions of \Cref{mono1} hold true.
  Then, for any $T>0$, up to the extraction of subsequences, we have the convergence
  \begin{align}\label{eq:convergenceToEquil}
    u^{\vectorial{\varepsilon}}
    =
    \distributionFunctionLetter_{\zeroVelocitySymbol}^{\vectorial{\varepsilon}}
    +
    \distributionFunctionLetter_{\positiveVelocitySymbol}^{\vectorial{\varepsilon}}
    +
    \distributionFunctionLetter_{\negativeVelocitySymbol}^{\vectorial{\varepsilon}}
    &\to u, 
    \\ 
    v^{\vectorial{\varepsilon}}
    =
    \latticeVelocity(\distributionFunctionLetter_{\positiveVelocitySymbol}^{\vectorial{\varepsilon}}
    -
    \distributionFunctionLetter_{\negativeVelocitySymbol}^{\vectorial{\varepsilon}})
    &\to \flux(u), 
    \\ 
    w^{\vectorial{\varepsilon}}
    =
    \distributionFunctionLetter_{\positiveVelocitySymbol}^{\vectorial{\varepsilon}}
    +
    \distributionFunctionLetter_{\negativeVelocitySymbol}^{\vectorial{\varepsilon}}
    &\to 2\equilibriumCoefficientLinear  u,
    \qquad \qquad
    \text{in}
    \quad 
    C^0([0, T]; L^1_{\textnormal{loc}}(\reals)),
  \end{align}
  where $u$ is the weak entropy solution of the conservation law \eqref{eq:conservationLaw}.
\end{theorem}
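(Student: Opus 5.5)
Thanks to \Cref{thm:convergenceNumericsPlusEstimates}, the family $(\vectorial{f}^{\vectorial{\varepsilon}})_{\vectorial{\varepsilon}}$ satisfies bounds that are \emph{uniform} in $(\ea,\es)$ as long as the assumptions of \Cref{mono1} hold. These are a uniform $L^\infty$ bound (values in $[\minvec,\maxvec]$), a uniform $L^1$ bound, a uniform total variation bound $\textnormal{TV}(u^\initial)$, and the uniform Lipschitz-in-time estimate in $L^1$ with constant $C_{\textnormal{ec}}\textnormal{TV}(u^\initial)$, where $C_{\textnormal{ec}}$ is independent of $\vectorial{\varepsilon}$. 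The plan is to combine Helly's compactness theorem in space with the equicontinuity in time, in an Arzel\`a--Ascoli argument in $C^0([0,T];L^1_{\textnormal{loc}}(\reals))$. We first fix a countable dense set of times in $[0,T]$ and extract, by a diagonal procedure, a subsequence such that $\vectorial{f}^{\vectorial{\varepsilon}}(t,\cdot)$ converges in $L^1_{\textnormal{loc}}$ at each such time. We then extend this convergence uniformly in $t$ using the time equicontinuity. This yields a limit $\vectorial{f}\in C^0([0,T];L^1_{\textnormal{loc}}(\reals)^3)$, and, up to a further extraction, almost everywhere convergence on $[0,T]\times\reals$.

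Next we identify the limit as being at equilibrium. Estimate \eqref{eq:almostEquilibrium} gives $\|\vectorial{f}^{\atEquilibrium}(u^{\vectorial{\varepsilon}}(t,\cdot))-\vectorial{f}^{\vectorial{\varepsilon}}(t,\cdot)\|_1\leq 2\latticeVelocity\textnormal{TV}(u^\initial)\max(\ea,\es)\to 0$, uniformly in $t$. Set $u\definitionEquality f_{\zeroVelocitySymbol}+f_{\positiveVelocitySymbol}+f_{\negativeVelocitySymbol}$. Since the equilibria are Lipschitz on $[-\maximumInitialDatum,\maximumInitialDatum]$ (they are $C^1$ there) and $u^{\vectorial{\varepsilon}}\to u$ in $C^0([0,T];L^1_{\textnormal{loc}})$, we get $\vectorial{f}^{\atEquilibrium}(u^{\vectorial{\varepsilon}})\to\vectorial{f}^{\atEquilibrium}(u)$ in the same topology. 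Hence $\vectorial{f}=\vectorial{f}^{\atEquilibrium}(u)$. Taking the linear combinations defining the moments, and using \eqref{eq:equilibriaForm}, this gives $v^{\vectorial{\varepsilon}}\to\latticeVelocity(f^{\atEquilibrium}_{\positiveVelocitySymbol}(u)-f^{\atEquilibrium}_{\negativeVelocitySymbol}(u))=\flux(u)$ and $w^{\vectorial{\varepsilon}}\to 2\equilibriumCoefficientLinear u$, which are the claimed convergences. The initial datum is handled by \eqref{f0} together with continuity at $t=0$: since $\vectorial{f}^{\vectorial{\varepsilon}}(0)=\vectorial{f}^{\atEquilibrium}(u^\initial)$, we get $u(0)=u^\initial$.

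It remains to show that $u$ is the entropy solution. We pass to the limit in the kinetic entropy inequality \eqref{eq:krushkovFixedEps}, which holds for every $\vectorial{\varepsilon}$ with no $\vectorial{\varepsilon}$-dependent constant; dominated convergence applies thanks to the uniform $L^\infty$ bound and the compact support of $\psi$. In the limit, $|f_{\indexVelocity}-f^{\atEquilibrium}_{\indexVelocity}(\kappa)|=|f^{\atEquilibrium}_{\indexVelocity}(u)-f^{\atEquilibrium}_{\indexVelocity}(\kappa)|$. The key point, and the only real obstacle, is to recover the Kru\v{z}kov entropy pair from these sums. Here we use the monotonicity of each equilibrium on $[-\maximumInitialDatum,\maximumInitialDatum]$ (the Lemma preceding \Cref{mono1}), which gives $|f^{\atEquilibrium}_{\indexVelocity}(u)-f^{\atEquilibrium}_{\indexVelocity}(\kappa)|=\textnormal{sgn}(u-\kappa)(f^{\atEquilibrium}_{\indexVelocity}(u)-f^{\atEquilibrium}_{\indexVelocity}(\kappa))$ for $\kappa\in[-\maximumInitialDatum,\maximumInitialDatum]$. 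Summing over $\indexVelocity$ then gives $\sum_{\indexVelocity}|f^{\atEquilibrium}_{\indexVelocity}(u)-f^{\atEquilibrium}_{\indexVelocity}(\kappa)|=|u-\kappa|$ and $\sum_{\indexVelocity}c_{\indexVelocity}|f^{\atEquilibrium}_{\indexVelocity}(u)-f^{\atEquilibrium}_{\indexVelocity}(\kappa)|=\textnormal{sgn}(u-\kappa)(\flux(u)-\flux(\kappa))$. This yields the Kru\v{z}kov inequalities for all $\kappa\in[-\maximumInitialDatum,\maximumInitialDatum]$. For $|\kappa|>\maximumInitialDatum$ they reduce to the weak formulation, which is obtained by the same limit applied to the conservation law $\partial_t u^{\vectorial{\varepsilon}}+\partial_x v^{\vectorial{\varepsilon}}=0$; alternatively, restricting to $\kappa$ in the range suffices since $|u|\leq\maximumInitialDatum$. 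Finally, uniqueness of the entropy solution (Kru\v{z}kov) implies that the whole extracted family converges to the same $u$, and a standard diagonal argument in $T$ gives the result on $[0,+\infty)$.
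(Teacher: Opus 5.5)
Your proposal is correct and follows the same route as the paper: compactness from the $\vectorial{\varepsilon}$-uniform estimates of \Cref{thm:convergenceNumericsPlusEstimates}, identification of the limit with the equilibrium through \eqref{eq:almostEquilibrium}, and passage to the limit in \eqref{eq:krushkovFixedEps}. Your explicit recovery of the Kru\v{z}kov pair, using the monotonicity of each equilibrium on $[-\maximumInitialDatum,\maximumInitialDatum]$, is the step the paper delegates to \cite[Theorem 4.11]{aregba2026monotonicity}, and your uniqueness argument additionally upgrades the subsequential convergence to convergence of the whole family, which is a valid strengthening of the statement.
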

\begin{proof}
  The extraction of a converging subsequence to some limit $\overline{\vectorial{\distributionFunctionLetter}}$ as $(\ea, \es) \to (0, 0)$ can be carried by the results in \Cref{thm:convergenceNumericsPlusEstimates}.
  By virtue of \eqref{eq:almostEquilibrium}, the limit coincides almost everywhere with the equilibrium, hence \eqref{eq:convergenceToEquil}.
  Moreover, this ensures, by taking the limit into the weak formulation of the first equation, that the limit $u$ is a solution to the scalar conservation law.
  To show that this is the sole entropy solution, let us take the limit $(\ea, \es) \to (0, 0)$ in \eqref{eq:krushkovFixedEps}:
  \begin{equation}
    -\int_{0}^{T}
    \int_{-\infty}^{+\infty}
    \sum_{\indexVelocity}
    |\distributionFunctionLetter_{\indexVelocity}^{\atEquilibrium}(u(\timeVariable, \spaceVariable))-\distributionFunctionLetter_{\indexVelocity}^{\atEquilibrium}(\kappa)|
    \partial_{\timeVariable}\psi(\timeVariable, \spaceVariable)
    \differential\spaceVariable\differential\timeVariable
    \leq
    \int_{0}^{T}
    \int_{-\infty}^{+\infty}
    \sum_{\indexVelocity}
    |\distributionFunctionLetter_{\indexVelocity}^{\atEquilibrium}(u(\timeVariable, \spaceVariable))-\distributionFunctionLetter_{\indexVelocity}^{\atEquilibrium}(\kappa)|
    \discreteVelocityLetter_{\indexVelocity}
    \partial_{\spaceVariable}\psi(\timeVariable, \spaceVariable)
    \differential\spaceVariable\differential\timeVariable.
  \end{equation}
  As in the proof of \cite[Theorem 4.11]{aregba2026monotonicity}, we obtain the entropy inequality with Krushkov entropies.
\end{proof}

This concludes the proof of the results which are recapitulated in \Cref{thm:hugeTheorem}.

\section{Extension to initial data of $L^{\infty}$-regularity only}\label{sec:gettingRidOfBV}

We now illustrate how to consider $u^{\initial}\in L^{\infty}(\reals)$ to obtain \Cref{thm:hugeTheoremLinf}.
We here assume without further mentioning it that the assumptions of \Cref{mono1} are fulfilled.
We proceed by density, noting that $u^{\initial}\in L^{1}_{\textnormal{loc}}(\reals)$.
We have a sequence of smooth (thus in particular of bounded total variation) and compactly supported functions $u^{\initial}_{\delta}$, with $\delta > 0$, such that 
\begin{equation*}
  u^{\initial}_{\delta}\xrightarrow[]{\delta \searrow 0} u^{\initial} \qquad \text{in}\quad L^1_{\textnormal{loc}}(\reals),
  \qquad \text{and}
  \qquad
  \lVert u^{\initial}_{\delta}\rVert_{\infty}\leq \lVert u^{\initial}\rVert_{\infty}.
\end{equation*}
We indicate 
\begin{itemize}
  \item by $u$, the solution of \eqref{eq:conservationLaw} with initial datum $u^{\initial}\in L^{\infty}(\reals)$.
  \item By ${u}_{\delta}$, the solution of \eqref{eq:conservationLaw} with regularized initial datum $u^{\initial}_{\delta}$.
  \item By $u^{\vectorial{\varepsilon}}$, the conserved moment of the relaxation system \eqref{system1}--\eqref{eq:equilibriaForm}--\eqref{f0} with initial datum $u^{\initial}\in L^{\infty}(\reals)$.
  \item By $u^{\vectorial{\varepsilon}}_{\delta}$, the conserved moment of the relaxation system \eqref{system1}--\eqref{eq:equilibriaForm}--\eqref{f0} with regularized initial datum $u^{\initial}_{\delta}$.
\end{itemize}
We emphasize the fact that, at this stage, there is \strong{no guarantee} that global existence of $u^{\vectorial{\varepsilon}}$ holds true.
We aim at showing that it is indeed the case.
For the moment, just assume this fact: we obtain, for any compact set
$K\subset \reals$ and for any time $T>0$ of existence of
$u^{\vectorial{\varepsilon}}$
\begin{multline*}
  \sup_{t\in[0, T]}\lVert u^{\vectorial{\varepsilon}}(t, \cdot) - u (t, \cdot)\rVert_{L^1(K)}
  \\
  \leq 
  \sup_{t\in[0, T]}\lVert u^{\vectorial{\varepsilon}}(t, \cdot) - u^{\vectorial{\varepsilon}}_{\delta}(t, \cdot)\rVert_{L^1(K)}+
  \sup_{t\in[0, T]}\lVert u^{\vectorial{\varepsilon}}_{\delta}(t, \cdot) - {u}_{\delta}(t, \cdot)\rVert_{L^1(K)}+
  \sup_{t\in[0, T]}\lVert {u}_{\delta}(t, \cdot) - u(t, \cdot)\rVert_{L^1(K)}.
\end{multline*}
Let us discuss term by term.
\begin{itemize}
  \item The first term is such that
  \begin{equation*}
    \sup_{t\in[0, T]}\lVert u^{\vectorial{\varepsilon}}(t, \cdot) - u^{\vectorial{\varepsilon}}_{\delta}(t, \cdot)\rVert_{L^1(K)}
    \leq C_{K, T}
    \lVert u^{\initial} - u^{\initial}_{\delta}\rVert_{L^1(K)}
    \xrightarrow[]{\delta \searrow 0}
    0,
  \end{equation*}
  where the first inequality comes from Proposition 2.1 in \cite{MR1409928}.
  \item The second term is such that 
  \begin{equation*}
    \sup_{t\in[0, T]}\lVert u^{\vectorial{\varepsilon}}_{\delta}(t, \cdot) - {u}_{\delta}(t, \cdot)\rVert_{L^1(K)}
    \xrightarrow[]{\varepsilon \searrow 0}
    0,
  \end{equation*}
  thanks to \Cref{thm:convergenceEps}, which holds in the bounded total variation setting.
  \item For the third term, we have 
  \begin{equation*}
    \sup_{t\in[0, T]}\lVert {u}_{\delta}(t, \cdot) - u(t, \cdot)\rVert_{L^1(K)}
    \leq
    \lVert {u}^{\initial}_{\delta} - u^{\initial}\rVert_{L^1(K)}
    \xrightarrow[]{\delta \searrow 0}
    0,
  \end{equation*}
  where the first inequality comes from the $L^1$ contractivity of the semi-group associated to \eqref{eq:conservationLaw}, see Theorem 5.1 in \cite{godlewski1991hyperbolic}.
\end{itemize}

We are left to prove that $\vectorial{f}^{\vectorial{\varepsilon}}$ (and thus ${u}^{\vectorial{\varepsilon}}$), solution of the relaxation system \eqref{system1}--\eqref{eq:equilibriaForm}--\eqref{f0} with initial datum $u^{\initial}\in L^{\infty}(\reals)$, exists globally, that is on $[0, T]$ for any $T>0$.
To this end, we need to recall some results on \strong{quasi-linear} systems, see \cite{MR1409928} and \cite{MR1643175}.
Local existence is ensured by the following, which is \cite[Theorems 3.1 and 3.2]{MR1409928} and \cite[Proposition 3.4]{MR1643175}
\begin{proposition}[Local existence and continuation]\label{prop:localExistenceQuasiLinear}
  For any initial datum $\vectorial{f}^{\initial}\in L^{\infty}(\reals)^3$, there exists $T=T(\lVert\vectorial{f}^{\initial}\rVert_{\infty})>0$ such that there exists a unique (weak) solution $\vectorial{f}^{\vectorial{\varepsilon}}$ of \eqref{system1}--\eqref{eq:equilibriaForm} on $(0, T)\times \reals$ with $\vectorial{f}^{\vectorial{\varepsilon}} \in C^0([0, T); L^1_{\textnormal{loc}}(\reals)^3)$.
  Moreover, 
  \begin{itemize}
    \item either $\vectorial{f}^{\vectorial{\varepsilon}} \in L^{\infty}((0, T)\times \reals)^3$ for any $T>0$ (\idEst{}, the solution is global);
    \item or there exists $T^{*}>0$ such that, for any $T<T^*$, $\vectorial{f}^{\vectorial{\varepsilon}}$ is defined on $(0, T)\times \reals$, and 
    \begin{equation*}
      \lim_{T\nearrow T^*} \lVert \vectorial{f}^{\vectorial{\varepsilon}} \rVert_{L^{\infty}((0, T)\times \reals)^3} = +\infty 
      \qquad \text{(blowup)}.
    \end{equation*}
  \end{itemize}
\end{proposition}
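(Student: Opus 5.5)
This proposition is a local well-posedness and continuation result for a semilinear hyperbolic system, and the plan is the classical Picard argument along characteristics. The relaxation term $\vectorial{G}$, the right-hand side of \eqref{system1}, is locally Lipschitz in $\vectorial{f}$: it is affine in $\vectorial{f}$ except through $\flux(u)$, and $\flux\in C^1$. The transport part is diagonal with constant speeds $0,\pm\latticeVelocity$. I therefore write \eqref{system1} in Duhamel (mild) form along the characteristics $x - c_i t$:
\begin{equation*}
  f_i(t,x) = f_i^{\initial}(x - c_i t) + \int_0^t G_i(\vectorial{f}(s, x - c_i(t-s)))\,\differential s, \qquad i=\zeroVelocitySymbol,\positiveVelocitySymbol,\negativeVelocitySymbol.
\end{equation*}
Let $R = 2\lVert\vectorial{f}^{\initial}\rVert_\infty$, and let $L_R$ and $M_R$ be the Lipschitz constant and the bound of $\vectorial{G}$ on the ball of radius $R$. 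These are finite and depend only on $R$, $\ea$, $\es$, $\equilibriumCoefficientLinear$, $\latticeVelocity$, and $\max_{|u|\le 3R}|\flux'(u)|$.

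The first step is a contraction on $X_T = \{\vectorial{f}\in L^\infty((0,T)\times\reals)^3 : \lVert \vectorial{f}\rVert_\infty \le R\}$. Choose $T$ such that $T M_R \le R/2$ and $T L_R<1$. The Duhamel map then sends $X_T$ into itself, because translations preserve $L^\infty$ norms, and it is a contraction in the sup norm. This yields a unique fixed point, and $T$ depends only on $\lVert\vectorial{f}^{\initial}\rVert_\infty$. The same Gronwall estimate applied to two mild solutions gives uniqueness within $L^\infty$.

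The second step is to check that the fixed point is a weak solution in $C^0([0,T);L^1_{\textnormal{loc}})$. Continuity in time comes from the continuity of translations in $L^1_{\textnormal{loc}}$ for the free term, and from the boundedness of the integrand for the Duhamel term. To see that the mild solution is a distributional solution, I test against $\psi$ and change variables along the characteristics. To show that any bounded weak solution is mild, I mollify the data or use the standard uniqueness argument for linear transport with a bounded source. Together these show that weak and mild solutions coincide, so uniqueness transfers to weak solutions.

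The third step is the continuation alternative. Let $T^*$ be the supremum of the times of existence of a bounded solution. If $T^*<\infty$ and $\limsup_{T\nearrow T^*}\lVert\vectorial{f}\rVert_{L^\infty((0,T)\times\reals)}=K<\infty$, I restart from time $T^*-\eta$ with data of norm at most $K$. The existence time is then uniform, $T(K)>0$, so choosing $\eta<T(K)$ extends the solution beyond $T^*$, a contradiction. Uniqueness ensures the pieces glue together. The norm on $(0,T)$ is monotone in $T$, so the limsup is a genuine limit and equals $+\infty$, which is the blowup alternative.

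The main obstacle I anticipate is the rigorous equivalence between weak $L^\infty$ solutions and mild ones, which is needed to claim uniqueness among weak solutions and not only among mild ones. I would handle it by citing the framework of \cite{MR1409928}, Theorems 3.1–3.2, rather than redoing it, since constant-speed diagonal transport makes that equivalence standard.
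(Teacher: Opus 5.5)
Your proposal is correct and follows essentially the route behind the paper's statement. The paper gives no proof of its own and simply invokes \cite[Theorems 3.1 and 3.2]{MR1409928} and \cite[Proposition 3.4]{MR1643175}, which rest on the same two ingredients you use: a contraction argument for the integral form along the constant-speed characteristics, and continuation via an existence time depending only on the $L^\infty$ norm of the data.

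The step you flag as the main obstacle is in fact elementary here. Take a bounded weak solution $\vectorial{f}$ and compare $f_i$ with its Duhamel expression built from the bounded source $G_i(\vectorial{f})$. In the variables $(t, x - c_i t)$ their difference has vanishing time derivative in the sense of distributions and zero initial trace, so it vanishes. Hence bounded weak solutions are mild, and your Gronwall uniqueness applies to them directly.
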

The previous result tells us that in order to prove that the solution $\vectorial{f}^{\vectorial{\varepsilon}}$ is global, we need to show that it \strong{does not blow-up} in finite time.
This can be done by showing that it remains (almost everywhere) in a bounded set.
This can be shown using a \strong{comparison principle}, see \cite[Proposition 3.5]{MR1643175}:
\begin{proposition}[Comparison principle]\label{prop:comparison}
  Let $\vectorial{f}^{\vectorial{\varepsilon}}$ and $\tilde{\vectorial{f}}^{\vectorial{\varepsilon}}$ be two solutions of  \eqref{system1}--\eqref{eq:equilibriaForm}on $(0, T)\times \reals$ with initial data $\vectorial{f}^{\initial}$ and $\tilde{\vectorial{f}}^{\initial}$.
  Let $Q\subset \reals^3$ be an interval with non-empty interior, such that 
  \begin{itemize}
    \item the right-hand side $\vectorial{G}$ of \eqref{system1} is quasi-monotone on $Q$.
    \item $\vectorial{f}^{\vectorial{\varepsilon}}, \tilde{\vectorial{f}}^{\vectorial{\varepsilon}}\in Q$ almost everywhere in  $(0, T)\times \reals$.
  \end{itemize}
  Then, if $\vectorial{f}^{\initial}\leq \tilde{\vectorial{f}}^{\initial}$ almost everywhere in $\reals$, then $\vectorial{f}^{\vectorial{\varepsilon}}\leq \tilde{\vectorial{f}}^{\vectorial{\varepsilon}}$ almost everywhere in  $(0, T)\times \reals$.
\end{proposition}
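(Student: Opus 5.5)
The comparison principle is a statement about semilinear systems whose transport part is diagonal with constant speeds $\discreteVelocityLetter_{\indexVelocity}\in\{0,\pm\latticeVelocity\}$. I would prove it with a Kato-type inequality on the positive parts $(\distributionFunctionLetter_{\indexVelocity}-\tilde{\distributionFunctionLetter}_{\indexVelocity})^+$, followed by a localized Gronwall argument on backward cones of dependence. Set $\vectorial{h}=\vectorial{f}^{\vectorial{\varepsilon}}-\tilde{\vectorial{f}}^{\vectorial{\varepsilon}}$.

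Each component satisfies, in the weak sense, $\partial_t h_{\indexVelocity}+\discreteVelocityLetter_{\indexVelocity}\partial_x h_{\indexVelocity}=G_{\indexVelocity}(\vectorial{f}^{\vectorial{\varepsilon}})-G_{\indexVelocity}(\tilde{\vectorial{f}}^{\vectorial{\varepsilon}})$. Since the operator is linear transport with constant speed and an $L^\infty$ source, mollifying in $x$ (or working along characteristics, where the equation is an ODE in mild form) gives the inequality
\[
\partial_t h_{\indexVelocity}^+ + \discreteVelocityLetter_{\indexVelocity}\partial_x h_{\indexVelocity}^+ \leq \indicatorFunction{\{h_{\indexVelocity}>0\}}\bigl(G_{\indexVelocity}(\vectorial{f}^{\vectorial{\varepsilon}})-G_{\indexVelocity}(\tilde{\vectorial{f}}^{\vectorial{\varepsilon}})\bigr)
\]
in $\mathcal{D}'$.

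The key step uses quasi-monotonicity. Both solutions lie in the interval $Q$, which is convex, and $\vectorial{G}$ is $C^1$ because $\flux\in C^1$. I would therefore write $G_{\indexVelocity}(\vectorial{f})-G_{\indexVelocity}(\tilde{\vectorial{f}})=\sum_{k}a_{\indexVelocity k}h_k$, with $a_{\indexVelocity k}=\int_0^1\partial_kG_{\indexVelocity}(\tilde{\vectorial{f}}+\vartheta\vectorial{h})\,\differential\vartheta$. Quasi-monotonicity on $Q$ gives $a_{\indexVelocity k}\geq 0$ for $k\neq \indexVelocity$, and all coefficients are bounded by some $L$ depending only on $Q$. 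On $\{h_{\indexVelocity}>0\}$ the diagonal term equals $a_{\indexVelocity\indexVelocity}h_{\indexVelocity}^+\leq L h_{\indexVelocity}^+$. The off-diagonal terms satisfy $a_{\indexVelocity k}h_k\leq a_{\indexVelocity k}h_k^+\leq L h_k^+$, precisely because $a_{\indexVelocity k}\geq0$. Hence
\[
\partial_t h_{\indexVelocity}^+ + \discreteVelocityLetter_{\indexVelocity}\partial_x h_{\indexVelocity}^+\leq L\sum_k h_k^+ .
\]

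Next I would localize, since the solutions are only $C^0(L^1_{\textnormal{loc}})$. Fix $R>0$ and integrate over the shrinking interval $|x|\leq R+\latticeVelocity(T-t)$, using a test function approximating its indicator. All speeds satisfy $|\discreteVelocityLetter_{\indexVelocity}|\leq\latticeVelocity$, so the boundary fluxes have the favorable sign. This yields $\frac{\differential}{\differential t}E(t)\leq 3L\,E(t)$ for $E(t)=\sum_{\indexVelocity}\int_{|x|\leq R+\latticeVelocity(T-t)}h_{\indexVelocity}^+\,\differential x$. The hypothesis $\vectorial{f}^{\initial}\leq\tilde{\vectorial{f}}^{\initial}$ gives $E(0)=0$, and $E$ is continuous in time, so Gronwall gives $E\equiv0$. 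Since $R$ is arbitrary, $\vectorial{h}\leq0$ almost everywhere.

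I expect the main obstacle to be the rigorous derivation of the Kato inequality for weak $L^\infty$ solutions. I would handle it through the mild (Duhamel along characteristics) formulation, as in \cite{MR1409928}, where each component is absolutely continuous along almost every characteristic line, so the chain rule for $s\mapsto s^+$ applies pointwise. The cone localization then only requires the $C^0(L^1_{\textnormal{loc}})$ time continuity.
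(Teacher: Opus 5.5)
The paper gives no proof of this proposition; it imports it from \cite[Proposition 3.5]{MR1643175}. Your argument is a correct, self-contained proof. The Kato inequality for each constant-speed component holds, quasi-monotonicity controls the sign of the off-diagonal coefficients (this is where convexity of the box $Q$ is used), and Gronwall on backward cones works because $|c_i|\le\lambda$.

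Two points need tightening. Both concern how the paper actually uses the statement, namely with $Q=\reals^3$ and the frozen right-hand side $\vectorial{G}^{\#}$.

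First, your constant $L$ cannot depend on $Q$ alone when $Q$ is unbounded. For the Burgers flux, $\partial\vectorial{G}$ is unbounded on $\reals^3$. Instead, take $L$ to be the maximum of $|\partial_k G_i|$ over $Q\cap[-M,M]^3$, where $M$ is an $L^\infty$ bound for both solutions on $(0,T')$, $T'<T$. This bound is part of the bounded-solution framework of \Cref{prop:localExistenceQuasiLinear}.

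Second, $\vectorial{G}^{\#}$ is in general only Lipschitz, since the frozen flux has a derivative jump at $\pm\maximumInitialDatum$. Your mean-value coefficients $a_{ik}$ are therefore not literally available there. A derivative-free version of your key step works. On $\{h_i>0\}$, quasi-monotonicity gives $G_i(\vectorial f)\le G_i(\max(\vectorial f,\tilde{\vectorial f}))$, where the componentwise maximum stays in the box $Q$. The Lipschitz bound then gives $G_i(\max(\vectorial f,\tilde{\vectorial f}))-G_i(\tilde{\vectorial f})\le L\sum_k h_k^+$, and you reach the same conclusion.

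There is also a route that avoids Gronwall entirely, because here $\sum_i G_i\equiv 0$ (conservation of $u$, which also holds for $\vectorial{G}^{\#}$). For indices with $h_i\le 0$, the same comparison gives $G_i(\max(\vectorial f,\tilde{\vectorial f}))\ge G_i(\tilde{\vectorial f})$. Combined with conservation, this yields $\sum_i \mathbf{1}_{\{h_i>0\}}\bigl(G_i(\vectorial f)-G_i(\tilde{\vectorial f})\bigr)\le 0$. Hence $\sum_i h_i^+$ satisfies a pure transport inequality, and its integral over the backward cone is non-increasing.

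That route needs no Lipschitz constant. It also gives an $L^1_{\mathrm{loc}}$ contraction of positive parts, which is the continuous analogue of the $\ell^1$-contractivity of the relaxation step sketched in the paper. Your Gronwall route is more general, since it does not use conservation. Its price is an exponential factor, which is harmless for a comparison principle because $E(0)=0$.
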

Note that the constant function $\tilde{\vectorial{f}}^{\vectorial{\varepsilon}}(\timeVariable, \spaceVariable) =\tilde{\vectorial{f}}^{\initial}(\spaceVariable)= \vectorial{\minimumDistribution}$ (respectively $\tilde{\vectorial{f}}^{\vectorial{\varepsilon}}(\timeVariable, \spaceVariable) = \tilde{\vectorial{f}}^{\initial}(\spaceVariable)=\vectorial{\maximumDistribution}$) is a (global) solution to  \eqref{system1}--\eqref{eq:equilibriaForm}, thus is the one we use when invoking \Cref{prop:comparison}.
To employ \Cref{prop:comparison}, we however need to know \strong{a priori} that the ``other'' solution $\vectorial{f}^{\vectorial{\varepsilon}}$ belongs to some interval $Q$, where quasi-monotonicity must hold.
Thus, in order to avoid a ``circular reasoning'', the only choice is to take $Q = \reals^3$. 
We follow a procedure inspired by \cite{MR1643175}: consider partly ``\strong{frozen}'' equilibria when their argument lays outside $[-\maximumInitialDatum, \maximumInitialDatum]$, where $\maximumInitialDatum = \lVert u^{\initial}\rVert_{\infty}$.
This reads 
\begin{equation}
\distributionFunctionLetter_{\zeroVelocitySymbol}^{\atEquilibrium, \#}(u) \definitionEquality
\distributionFunctionLetter_{\zeroVelocitySymbol}^{\atEquilibrium}(u)  = (1-2\equilibriumCoefficientLinear ) u
\qquad \text{and} \qquad
\distributionFunctionLetter_{\pm}^{\atEquilibrium, \#}(u) \definitionEquality
\begin{cases}
  \equilibriumCoefficientLinear  u \pm \frac{\flux(-\maximumInitialDatum)}{2\latticeVelocity}, \qquad &\text{if}\quad u<-\maximumInitialDatum,\\
  \equilibriumCoefficientLinear  u \pm \frac{\flux(\conservedVariable)}{2\latticeVelocity}, \qquad &\text{if}\quad |u|\leq \maximumInitialDatum, \\
  \equilibriumCoefficientLinear  u \pm \frac{\flux(\maximumInitialDatum)}{2\latticeVelocity}, \qquad &\text{if}\quad u>\maximumInitialDatum.
\end{cases}
\end{equation}
Note that different choices can be done. Here, we decided to freeze the non-linear part of the equilibria.
We denote by $\vectorial{G}^{\#}$ the right-hand side of \eqref{system1} where the functions $\distributionFunctionLetter_{\indexVelocity}^{\atEquilibrium}$ have been replaced by $\distributionFunctionLetter_{\indexVelocity}^{\atEquilibrium, \#}$. 
This function coincides with $\vectorial{G}$ for arguments $\vectorial{f}^{\vectorial{\varepsilon}}$ such that $|\sum_{\indexVelocity}{f}_{\indexVelocity}^{\vectorial{\varepsilon}}|\leq \maximumInitialDatum$, and for such arguments the same usual sufficient quasi-monotonicity conditions can be employed.
Now let $\vectorial{f}^{\vectorial{\varepsilon}}$ such that $|\sum_{\indexVelocity}{f}_{\indexVelocity}^{\vectorial{\varepsilon}}|> \maximumInitialDatum$: we do not study $G_{\zeroVelocitySymbol}^{\#}$ since it is unchanged. 
On the other hand, $\partial_{f_{\zeroVelocitySymbol}}G_{\pm}^{\#} = \frac{1}{\es} \equilibriumCoefficientLinear$, which commands $\equilibriumCoefficientLinear\geq 0$. Finally 
\begin{equation*}
  \partial_{f_{\mp}}G_{\pm}^{\#}
  =
  \frac{1}{2}
  \Bigl ( 
    \frac{1}{\es}+\frac{1}{\ea}
  \Bigr )\equilibriumCoefficientLinear
  +
  \frac{1}{2}
  \Bigl ( 
    \frac{1}{\es}-\frac{1}{\ea}
  \Bigr )(\equilibriumCoefficientLinear-1)
  \qquad \text{hence we want}
  \qquad 
  \equilibriumCoefficientLinear\geq \frac{1}{2}
  \Bigl( 
    1-\frac{\es}{\ea}
    \Bigr ).
\end{equation*}
We have thus proved the following result.
\begin{proposition}\label{prop:quasiMonotonicityFrozenEquil}
  Assume that 
  \begin{align*}
    \text{if}\quad
    \es<\ea, 
    \quad 
    &\text{then}
    \quad 
    \equilibriumCoefficientLinear\in 
    \Bigl [
    1-\frac{\es}{\ea}
    , \frac{1}{2}\Bigr ]; \quad \text{or} \\
    \text{if}\quad
    \es\geq \ea, 
    \quad 
    &\text{then}
    \quad 
    \equilibriumCoefficientLinear\in 
    \Bigl (0
    , \frac{1}{2}\Bigr ],
  \end{align*}
  and
  \begin{align}
    &\text{if}\quad \equilibriumCoefficientLinear   = \tfrac{1}{2}, 
    \quad \text{then}
    \quad 
    \max_{u\in[-\maximumInitialDatum, \maximumInitialDatum]}
    \frac{|\flux^\prime(u)|}{\lambda}\leq 1; \qquad \text{or} \\
    &\text{if}\quad \equilibriumCoefficientLinear   \neq \tfrac{1}{2}, 
    \quad \text{then}
    \quad 
    \max_{u\in[-\maximumInitialDatum, \maximumInitialDatum]}
    \frac{|\flux^\prime(u)|}{\lambda}\leq \min\left(  \frac{2  \ea \equilibriumCoefficientLinear  }{\es},1-\frac{\ea}{\es}(1-2\equilibriumCoefficientLinear  )\right).
\end{align}
Under these conditions, the function $\vectorial{G}^{\#}$ is quasi-monotone on $Q = \reals^3$.
\end{proposition}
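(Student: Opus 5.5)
The plan is to check, component by component, that every off-diagonal partial derivative of $\vectorial{G}^{\#}$ is non-negative on all of $\reals^3$. Since $\vectorial{G}^{\#}$ depends on $\vectorial{f}$ only through the three individual components and through $u=\sum_{\indexVelocity}f_{\indexVelocity}$, the sign of each off-diagonal derivative depends on $\vectorial{f}$ only through $u$. The frozen equilibria are Lipschitz and piecewise $C^1$, so monotonicity in each argument is equivalent to non-negativity of the one-sided derivatives, including at the junction points $u=\pm\maximumInitialDatum$. We therefore split $\reals^3$ into the region $|u|\leq\maximumInitialDatum$ and the region $|u|>\maximumInitialDatum$.

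\textbf{Region $|u|\leq \maximumInitialDatum$.} Here $\vectorial{G}^{\#}=\vectorial{G}$, and the derivatives are exactly those computed before \Cref{mono1}. Under $\equilibriumCoefficientLinear\in(0,\tfrac{1}{2}]$ together with \eqref{subchar}/\eqref{cond1}, they are non-negative.

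We make this explicit for one entry. For $j\neq\indexVelocity$ one has $\partial_{f_{\zeroVelocitySymbol}}G_{\pm}=\frac{1}{\es}\equilibriumCoefficientLinear\pm\frac{1}{2\ea\latticeVelocity}\flux'(u)$. Its non-negativity is equivalent to $|\flux'|/\latticeVelocity\leq 2\ea\equilibriumCoefficientLinear/\es$, which is the first entry of the minimum in \eqref{cond1}.

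Similarly, $\partial_{f_{\mp}}G_{\pm}=\tfrac12(\tfrac1{\es}+\tfrac1{\ea})\equilibriumCoefficientLinear+\tfrac12(\tfrac1{\es}-\tfrac1{\ea})(\equilibriumCoefficientLinear-1)\pm\frac{1}{2\ea\latticeVelocity}\flux'(u)$. Its non-negativity is $|\flux'|/\latticeVelocity\leq 1-\frac{\ea}{\es}(1-2\equilibriumCoefficientLinear)$, the second entry of the minimum.

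Finally, $\partial_{f_{\pm}}G_{\zeroVelocitySymbol}=\frac{1}{\es}(1-2\equilibriumCoefficientLinear)\geq0$. In the case $\equilibriumCoefficientLinear=\tfrac12$, the reduced computation gives \eqref{subchar} instead.

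\textbf{Region $|u|>\maximumInitialDatum$.} Here $\flux$ is replaced by a constant, so all the $\flux'$ terms disappear. $G_{\zeroVelocitySymbol}^{\#}$ is unchanged and linear, with off-diagonal derivatives $\frac{1}{\es}(1-2\equilibriumCoefficientLinear)\geq0$. The remaining derivatives are:
\begin{itemize}
\item $\partial_{f_{\zeroVelocitySymbol}}G^{\#}_{\pm}=\equilibriumCoefficientLinear/\es>0$;
\item $\partial_{f_{\mp}}G^{\#}_{\pm}$, which is non-negative if and only if $\equilibriumCoefficientLinear\geq\tfrac12(1-\es/\ea)$, as computed just before the statement.
\end{itemize}

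If $\es\geq\ea$, the right-hand side of that condition is $\leq 0$, so it follows from $\equilibriumCoefficientLinear>0$. If $\es<\ea$, the assumption $\equilibriumCoefficientLinear\geq 1-\es/\ea\geq\tfrac12(1-\es/\ea)$ suffices. (It is also worth noting that, with no $\flux'$ term, this is implied by the second entry of the min in \eqref{cond1} anyway.)

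\textbf{Junction points.} The main delicate point is monotonicity across $u=\pm\maximumInitialDatum$, where $\vectorial{G}^{\#}$ is only Lipschitz. We handle it by noting that each $G^{\#}_{\indexVelocity}$, viewed as a function of a single argument $f_j$ with the others fixed, is continuous and piecewise $C^1$ with non-negative derivative on each piece. Hence it is non-decreasing on the whole line.

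Combining the two regions gives quasi-monotonicity of $\vectorial{G}^{\#}$ on $Q=\reals^3$ in the sense of \Cref{def:quasiMon}.
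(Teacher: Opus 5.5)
Your route is the paper's own. You split $\reals^3$ into $|u|\le\maximumInitialDatum$, where $\vectorial{G}^{\#}=\vectorial{G}$, and $|u|>\maximumInitialDatum$, where the $\flux'$ terms disappear, and your derivative formulas are correct. For $\equilibriumCoefficientLinear\neq\tfrac12$ the argument is complete. Two of your additions are correct and go beyond what the paper writes. One is the piecewise-$C^1$ treatment of the junctions $u=\pm\maximumInitialDatum$. The other is the remark that \eqref{cond1} already forces $\equilibriumCoefficientLinear\ge\tfrac12(1-\es/\ea)$, so the hypothesis $\equilibriumCoefficientLinear\ge 1-\es/\ea$ plays no role.

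The gap is the sentence ``in the case $\equilibriumCoefficientLinear=\tfrac12$, the reduced computation gives \eqref{subchar} instead''. The reduced computation of \Cref{sec:quasiMonotonicity} discards $f_{\zeroVelocitySymbol}$ because $f_{\zeroVelocitySymbol}\equiv 0$ along solutions with equilibrium data. On $Q=\reals^3$, however, the derivative in $f_{\zeroVelocitySymbol}$ must be checked too. Your own formula gives, at $\equilibriumCoefficientLinear=\tfrac12$ and $|u|\le\maximumInitialDatum$,
\[
\partial_{f_{\zeroVelocitySymbol}}G_{\pm}=\frac{1}{2\es}\pm\frac{\flux'(u)}{2\latticeVelocity\ea},
\]
which is non-negative only if $|\flux'(u)|/\latticeVelocity\le\ea/\es$. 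This does not follow from \eqref{subchar} when $\es>\ea$.

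For a concrete failure, take Burgers with $\maximumInitialDatum=\latticeVelocity=1$ and $\es=2\ea$. All hypotheses hold, yet $\partial_{f_{\zeroVelocitySymbol}}G^{\#}_{\positiveVelocitySymbol}=(1+2u)/(4\ea)<0$ for $u\in[-1,-\tfrac12)$. So this case of the statement is false as written. The paper's one-line appeal to ``the usual sufficient quasi-monotonicity conditions'' has the same hole.

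There are two ways to repair it:
\begin{itemize}
\item Assume \eqref{cond1} also at $\equilibriumCoefficientLinear=\tfrac12$, that is, $\max|\flux'|/\latticeVelocity\le\min(1,\ea/\es)$.
\item Use that, for $\equilibriumCoefficientLinear=\tfrac12$, the hyperplane $f_{\zeroVelocitySymbol}=0$ is invariant, and prove quasi-monotonicity of the $2\times2$ system in $(f_{\positiveVelocitySymbol},f_{\negativeVelocitySymbol})$ on $\reals^2$. There $\partial_{f_{\mp}}G^{\#}_{\pm}=\frac{1}{2\ea}\bigl(1\pm\flux'(u)/\latticeVelocity\bigr)$ for $|u|\le\maximumInitialDatum$ and $\frac{1}{2\ea}$ otherwise, so \eqref{subchar} suffices. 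This is what the global-existence argument of \Cref{sec:gettingRidOfBV} actually needs.
\end{itemize}
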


\begin{proposition}
  Let $u^{\initial}\in L^{\infty}(\reals)$ and let the assumptions of \Cref{prop:quasiMonotonicityFrozenEquil} be fulfilled.
  Then, for any $T>0$, there exists a unique $\vectorial{f}^{\vectorial{\varepsilon}}$ (and thus ${u}^{\vectorial{\varepsilon}}$) solution of the relaxation system \eqref{system1}--\eqref{eq:equilibriaForm}--\eqref{f0} on $(0, T]\times \reals$ with $\vectorial{f}^{\vectorial{\varepsilon}}\in C^0([0, T]; L^1_{\textnormal{loc}}(\reals)^3)$ and $\vectorial{f}^{\vectorial{\varepsilon}}
    \in [\vectorial{\minimumDistribution}, \vectorial{\maximumDistribution}]$  (and thus $u^{\vectorial{\varepsilon}}\in [-\maximumInitialDatum, \maximumInitialDatum]$) almost everywhere.
\end{proposition}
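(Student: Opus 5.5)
The plan is to work with the modified system whose right-hand side is $\vectorial{G}^{\#}$ (frozen equilibria $\distributionFunctionLetter_{\indexVelocity}^{\atEquilibrium, \#}$) rather than with \eqref{system1} directly. Since $\distributionFunctionLetter_{\indexVelocity}^{\atEquilibrium, \#}$ are Lipschitz on $\reals$, the results behind \Cref{prop:localExistenceQuasiLinear} (\cite[Theorems 3.1 and 3.2]{MR1409928}) apply to this system as well. They give a unique local weak solution $\vectorial{f}^{\#}\in C^0([0,T);L^1_{\textnormal{loc}}(\reals)^3)$ with initial datum $\vectorial{f}^{\initial} = \vectorial{f}^{\atEquilibrium}(u^{\initial})$, together with the continuation alternative (global, or $L^\infty$ blowup at some $T^*$). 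The initial datum satisfies $\vectorial{f}^{\initial}\in[\minvec,\maxvec]$ almost everywhere, because $u^{\initial}\in[-\maximumInitialDatum,\maximumInitialDatum]$ and the equilibria are non-decreasing on this interval by the Lemma preceding \Cref{mono1}.

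The key step is a comparison on the whole space. By \Cref{prop:quasiMonotonicityFrozenEquil}, $\vectorial{G}^{\#}$ is quasi-monotone on $Q=\reals^3$, so the a priori membership hypothesis in \Cref{prop:comparison} is automatic on any existence interval $(0,T)$ with $T<T^*$. The constant states $\minvec$ and $\maxvec$ are solutions of the $\#$-system: for $\minvec$ the sum of components is $-\maximumInitialDatum$, where $\distributionFunctionLetter_{\indexVelocity}^{\atEquilibrium,\#}(-\maximumInitialDatum)=\distributionFunctionLetter_{\indexVelocity}^{\atEquilibrium}(-\maximumInitialDatum)=\underline{m}_{\indexVelocity}$, so $\vectorial{G}^{\#}(\minvec)=0$, and the same holds for $\maxvec$. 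Applying \Cref{prop:comparison} twice (with the $\#$-system in place of \eqref{system1}; its proof in \cite{MR1643175} only uses quasi-monotonicity and Lipschitz continuity) gives $\minvec\le\vectorial{f}^{\#}\le\maxvec$ almost everywhere on $(0,T)\times\reals$. Because this bound is uniform in $T$, blowup is excluded, and the continuation alternative yields global existence for every $T>0$.

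It remains to return to \eqref{system1}. Summing components, $u^{\#}=\sum_{\indexVelocity} f^{\#}_{\indexVelocity}\in[-\maximumInitialDatum,\maximumInitialDatum]$ almost everywhere, and on this range $\vectorial{G}^{\#}=\vectorial{G}$. Hence $\vectorial{f}^{\#}$ is a weak solution of \eqref{system1}--\eqref{eq:equilibriaForm}--\eqref{f0} with the stated bounds. For uniqueness, let $\vectorial{f}^{\vectorial{\varepsilon}}$ be any solution of the original problem on $(0,T)$ given by \Cref{prop:localExistenceQuasiLinear}. Up to some time it stays in a neighbourhood where $u$ lies in $[-\maximumInitialDatum,\maximumInitialDatum]$, and there it solves the $\#$-system; uniqueness for the $\#$-system then gives $\vectorial{f}^{\vectorial{\varepsilon}}=\vectorial{f}^{\#}$.

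The main obstacle is this last identification. A solution of the original system need not a priori keep $u$ in $[-\maximumInitialDatum,\maximumInitialDatum]$, so it is not obvious that it solves the $\#$-system. I would argue by maximality. Let $\tau$ be the supremum of times up to which the two solutions coincide. Both belong to the $L^\infty$ local-uniqueness class of \cite{MR1409928}, with Lipschitz constants controlled on bounded sets. The $\#$-solution is bounded by $[\minvec,\maxvec]$, and restarting from time $\tau$ with the local uniqueness theorem, applied on a ball containing both solutions, shows that they coincide slightly beyond $\tau$. This contradicts maximality unless $\tau=T$. Alternatively, one can directly use the $L^1_{\textnormal{loc}}$-stability estimate \cite[Proposition 2.1]{MR1409928} for bounded solutions with Lipschitz right-hand side, since $\vectorial{G}$ and $\vectorial{G}^{\#}$ agree on the range of $\vectorial{f}^{\#}$.
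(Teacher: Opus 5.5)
Your proposal is correct and follows essentially the same route as the paper. Both arguments replace $\vectorial{G}$ by the frozen $\vectorial{G}^{\#}$, use its quasi-monotonicity on $\reals^3$ to compare with the constant solutions $\minvec$ and $\maxvec$, and conclude by the continuation alternative of \Cref{prop:localExistenceQuasiLinear}. The only difference is one of ordering: you first globalize the $\#$-solution and then identify it with the original solution through the $L^1_{\textnormal{loc}}$-stability estimate of \cite{MR1409928}, whereas the paper asserts this identification in one line on $[0,T^{\#})$. Keep the direct stability argument; your initial heuristic that an original solution keeps its sum $u$ in $[-\maximumInitialDatum,\maximumInitialDatum]$ for short times does not follow from $L^1_{\textnormal{loc}}$-continuity.
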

\begin{proof}
  Note that \Cref{prop:localExistenceQuasiLinear} applies also when the right-hand side $\vectorial{G}$ of \eqref{system1} is replaced by $\vectorial{G}^{\#}$.
  So, let us denote $\vectorial{f}^{\vectorial{\varepsilon}, \#}$ the unique solution of \eqref{system1}--\eqref{eq:equilibriaForm} with initial datum 
  \begin{equation*}
    \vectorial{f}^{\vectorial{\varepsilon}, \#}(\timeVariable = 0, \spaceVariable)
    =
    \vectorial{f}^{\atEquilibrium, \#}(u^{\initial}(\spaceVariable))
    =
    \vectorial{f}^{\atEquilibrium}(u^{\initial}(\spaceVariable)),
  \end{equation*}
  along with its existence time $T^{\#}>0$.
  By virtue of \Cref{prop:quasiMonotonicityFrozenEquil}, we can use \Cref{prop:comparison} comparing $\vectorial{f}^{\vectorial{\varepsilon}, \#}$ with constant solutions, and obtain
  \begin{equation*}
    \vectorial{f}^{\vectorial{\varepsilon}, \#}(\timeVariable, \spaceVariable)
    \in [\vectorial{\minimumDistribution}, \vectorial{\maximumDistribution}]
    \qquad 
    \text{for almost every}
    \quad
    (\timeVariable, \spaceVariable)\in [0, T^{\#})\times \reals.
  \end{equation*}
  This entails, by the way, that $\vectorial{f}^{\vectorial{\varepsilon}, \#}\equiv \vectorial{f}^{\vectorial{\varepsilon}}$ on $[0, T^{\#})\times \reals$, with the latter being the (local) solution of \eqref{system1}--\eqref{eq:equilibriaForm}--\eqref{f0}.
  As the solution remains uniformly bounded, we can invoke a continuation argument and argue by the second part of \Cref{prop:localExistenceQuasiLinear}: $\vectorial{f}^{\vectorial{\varepsilon}}$ and thus $u^{\vectorial{\varepsilon}}$ is global.
\end{proof}

\section{Conclusions}

In this work, we have introduced a new relaxation approximation for a scalar one-dimensional conservation law featuring two relaxation parameters and three discrete velocities.
This relaxation system is directly inspired by well-established works on lattice Boltzmann schemes, which can be somehow regarded as its discrete counterpart.
In our way of proceeding, not only the numerical scheme is an inspiration to build the relaxation approximation to the original PDE, but is also used to deduce properties of the solutions of the relaxation approximation that allow to prove its convergence to the solution of the PDE.

Extensions of the work to the multi-dimensional setting and/or to larger discrete velocity stencils are straightforward, see \cite{aregba2026monotonicity}, provided that one consider couples of pair-wise opposite discrete velocities and that the relaxation term has the same structure as in \eqref{system1}.

\bibliographystyle{alpha}
\bibliography{biblio}

\appendix

\section{Equivalent second-order equation on ${u}$}\label{app:equivalentSecondU}

We rewrite \eqref{eq:systemODEuandV} as
\begin{equation*}
    \begin{pmatrix}
        -c^2 \textnormal{d}_{\xi} + \frac{c}{\ea} & \lambda^2\textnormal{d}_{\xi}\\
        c \textnormal{d}_{\xi} - \frac{2\equilibriumCoefficientLinear }{\es} & -c\textnormal{d}_{\xi} + \frac{1}{\es}
    \end{pmatrix}
    \begin{pmatrix}
        {u}(\xi)\\
        {w}(\xi)
    \end{pmatrix}
    =
    \begin{pmatrix}
        \frac{1}{\ea}(\flux({u}(\xi))-C_v)\\
        0
    \end{pmatrix}.
\end{equation*}
With this
\begin{equation*}
    \textnormal{det}
    \begin{pmatrix}
        -c^2 \textnormal{d}_{\xi} + \frac{c}{\ea} & \lambda^2\textnormal{d}_{\xi}\\
        c \textnormal{d}_{\xi} - \frac{2\equilibriumCoefficientLinear }{\es} & -c\textnormal{d}_{\xi} + \frac{1}{\es}
    \end{pmatrix}
    =
    c(c^2 - \lambda^2) \textnormal{d}_{\xi\xi} 
    + \Bigl( -\frac{c^2}{\es} - \frac{c^2}{\ea} + \frac{2\lambda^2\equilibriumCoefficientLinear }{\es}\Bigr )\textnormal{d}_{\xi}
    +\frac{c}{\ea\es},
\end{equation*}
and 
\begin{equation*}
    \transpose{\bm{e}_1}
    \textnormal{\textbf{adj}}
    \begin{pmatrix}
        -c^2 \textnormal{d}_{\xi} + \frac{c}{\ea} & \lambda^2\textnormal{d}_{\xi}\\
        c \textnormal{d}_{\xi} - \frac{2\equilibriumCoefficientLinear }{\es} & -c\textnormal{d}_{\xi} + \frac{1}{\es}
    \end{pmatrix}
    = 
    \Bigl (-c\textnormal{d}_{\xi} + \frac{1}{\es}, -\lambda^2\textnormal{d}_{\xi} \Bigr ).
\end{equation*}
This entails---by virtue of the Laplace expansion of the determinant---that the differential equation on ${u}$ reads as in \eqref{eq:secondOrderU}. 

\section{Proof of \Cref{lemma:classificationEquilibriaBurgers}}\label{app:lemma:classificationEquilibriaBurgers}

Under the assumptions in the claim, we have that $\gamma<0$.
  Moreover, $\textnormal{det}\mathscr{J}(u, 0) = \frac{\gamma}{c}(c-u) = \frac{\gamma}{c}(c-\flux'(u))$, hence by the assumptions $\textnormal{det}\mathscr{J}(u_+, 0)<0$ and $\textnormal{det}\mathscr{J}(u_-, 0)>0$.
  This entails that the eigenvalues for $u_+$ have real parts of opposite sign and are indeed real.
  Those for $u_-$ have real parts of same sign.

  Let us now further investigate $u_-$: we have
  \begin{equation*}
    \textnormal{tr} \mathscr{J}(u_-, 0)
    =
  \frac{1}{\lambda^2-c^2}
\Bigl ( \frac{2\lambda^2\equilibriumCoefficientLinear }{\es c}   -\frac{u_- + u_+}{2\es} + \frac{u_- - u_+}{2\ea}\Bigr ).
  \end{equation*}
  If $\textnormal{tr} \mathscr{J}(u_-, 0)<0$, the eigenvalues have both negative real part (the equilibrium is a sink), and the heteroclinic profile cannot exist.
  Indeed, even reversing $\xi\mapsto-\xi$, this equilibrium becomes a source, hence cannot be connected to the other in the reversed direction.
  This condition boils down to requesting 
  \begin{equation*}
    \equilibriumCoefficientLinear  <
    \frac{c}{2\lambda^2 }
    \Bigl ( 
    \frac{u_- + u_+}{2} -\frac{u_- - u_+}{2}\frac{\es}{\ea}
    \Bigr ),
  \end{equation*}
  which is a way of saying the dissipation is extremely low in the system.
  This can be attained provided that the term into parentheses is positive.
  This reads 
  \begin{equation*}
    \frac{\es}{\ea} 
    <
    \frac{u_- + u_+}{u_- - u_+}.
  \end{equation*}
  In the case $\textnormal{tr} \mathscr{J}(u_-, 0) = 0$, the equilibrium is not hyperbolic and the Hartman-Grobman theorem does not provide information. 
  For $\textnormal{tr} \mathscr{J}(u_-, 0)>0$, the equilibrium is a source, which is necessary to obtain the existence of a heteroclinic orbit.
  One can study if the equilibrium is a focus or a node, namely in the first case, the eigenvalues possess a non-zero imaginary part.
  This boils down to checking $\textnormal{tr} \mathscr{J}(u_-, 0)^2<4\, \textnormal{det} \mathscr{J}(u_-, 0)$.
  Since we are in the case of positive trace, we study $\textnormal{tr} \mathscr{J}(u_-, 0)<2 \sqrt{\textnormal{det} \mathscr{J}(u_-, 0)}$, which reads
  \begin{equation*}
\equilibriumCoefficientLinear 
 < 
 \frac{c}{2\lambda^2}
 \Bigl ( \frac{u_- + u_+}{2} - \frac{u_- - u_+}{2}\frac{\es}{\ea} + 2 \sqrt{(\lambda^2-c^2)
  \frac{\es}{\ea}\frac{u_- - u_+}{u_-+u_+}
}\Bigr ).
  \end{equation*}

\end{document}